\documentclass[reqno]{amsart}
\usepackage{color}
\usepackage{amssymb}
\usepackage{enumerate}
\newtheorem{theorem}{Theorem}[section]
\newtheorem{lemma}[theorem]{Lemma}
\newtheorem{corollary}[theorem]{Corollary}
\newtheorem{proposition}[theorem]{Proposition}

\newtheorem{definition}[theorem]{Definition}

\newtheorem{Concluding Remark}[theorem]{Concluding Remark}

\newcommand{\no}{\nonumber}

\numberwithin{equation}{section}
\usepackage{mathrsfs}
\usepackage{xcolor}  

\begin{document}
	
	\title[ On the modularity of the odd rank generating functions]
	{ On the modularity of the odd rank generating functions}
	\maketitle
	\begin{center}
		
		Renrong Mao  
		
		Department of Mathematics,\\
		Soochow University, \\
		Suzhou, 215006,
		People's Republic of China\\[6pt]

		Email: rrmao@suda.edu.cn

	\end{center}
	\textbf{Abstract:}
 To provide partition-theoretic interpretations to Watson's the third-order mock theta function $\omega(q)$, Andrews defined the odd Durfee symbols and odd ranks.
 Motivated by Andrews' work, arithmetic properties of odd ranks are widely studied recently. In this paper, we obtain transformation formulas of the odd rank generating functions,
which are used to construct families of weak Maass forms and weakly modular forms. As an application,
we provide explicit identities for odd ranks modulo 5, analogous to Ramanujan’s classical partition identities.	
	
\vspace{0.2cm}	
\noindent\textbf{Mathematics Subject Classification (2020):} 11P83, 11F37, 11F11, 05A19

\vspace{0.2cm}	
\noindent	\textbf{Keywords:} odd Durfee symbol,
		 odd ranks, weak Maass form, Appell-Lerch sums
	
%
	
%

	\section{Introduction}
	\allowdisplaybreaks
		A partition of a positive integer $n$ is a sequence of non-increasing positive integers whose sum equals $n$. Let
	$p(n)$ denote the number of partitions of $n$.
	Ramanujan \cite{ra1} proved the following famous congruences:
	\begin{align}
		p(5n+4)&\equiv 0\pmod 5\label {1i1}\\
		p(7n+5)&\equiv 0\pmod 7\label {1i2}\\
		p(11n+6)&\equiv 0\pmod{11}\label {1i3}.
	\end{align}
	Moreover, Ramanujan also found two identities for $p(5n+4)$ and $p(7n+5)$:
	\begin{align}
		&\sum_{n=0}^{\infty}p(5n+4)q^n=5\frac{(q^5;q^5)^5_\infty}{(q;q)^6_\infty},\label{1id5}
		\intertext{and}
		&\sum_{n=0}^{\infty}p(7n+5)q^n=7\frac{(q^7;q^7)^3_\infty}
		{(q;q)^4_\infty}+49q\frac{(q^7;q^7)^7_\infty}{(q;q)^8_\infty}.\label{1id7}
	\end{align}
Here and for the rest of the paper, we use the notations
	\begin{align*}
		(a)_\infty:=(a;q)_\infty:&=\prod_{n=0}^\infty
		(1-aq^n),\\[6pt]
		(a_1,a_2,\ldots,a_k)_\infty :=(a_1,a_2,\ldots,a_k;q)_\infty
		:&=(a_1;q)_\infty
		(a_2;q)_\infty \cdots (a_k;q)_\infty,\\[6pt]
		[a_1,a_2,\ldots,a_k]_\infty:=[a_1,a_2,\ldots,a_k;q]_\infty
		:&=(a_1,q/a_1,a_2,q/a_2,\ldots,
		a_k, q/a_k;q)_\infty,\\[6pt]
		J_m
		:&=(q^m;q^m)_\infty,\\
		J_{a,b}
		:&=(q^a,q^{b-a},q^b;q^b)_\infty
	\end{align*}
	with $q:=e^{2\pi i \tau}$ and $\textrm{Im}(\tau)>0$. G. H. Hardy referred to \eqref{1id5} as ``Ramanujan's most beautiful identity" (cf. \cite[p. xvvv]{collect}).
	
	In order to explain Ramanujan's congruences combinatorially, Dyson \cite{dyson} defined the rank of a partition to be the largest part minus the number of parts.
	Let $N(m,n)$  denote the number of partitions of $n$ with rank $m$ and
	\begin{align}
		N(m,k,n):=\sum_{t\equiv m\pmod{k}}	N(t,n).\no
	\end{align}
	 Dyson conjectured that
	\begin{align}
		N(m,5,5n+4)&=\frac{p(5n+4)}{5}\label{1i4}
		\intertext{for all $0\leq m\leq 4$ and for $0\leq j\leq 6$}
		N(j,7,7n+5)&=\frac{p(7n+5)}{7}\label{1i5} .
	\end{align}
Obviously, equation \eqref{1i4} (resp. \eqref{1i5}) implies \eqref{1i1} (resp. \eqref{1i2}). 
Using $q$-series techniques, Atkin and Swinnerton-Dyer first found a proof of
Dyson's conjecture in \cite{as}.
	They also obtained the generating function for every rank difference $N(b,l,ln+d)-N(c,l,ln+d)$ with $l=5$ or $7$ and $0\leq b,c,d<l$.
	As noted by Garvan \cite{gar}, Dyson's conjecture also follows from an identity in Ramanujan's Lost Notebook \cite[p. 20]{ra2} (see also \cite[Eq. (2.1.17)]{lost}):
	\begin{align}
	R(\zeta_5, q) &=\frac{[q^{10}; q^{25}]_\infty J_{25}}{[q^5; q^{25}]_\infty^2} + (\zeta_5 + \zeta_5^{-1} - 2) \left(-1 + \sum_{n=0}^\infty \frac{q^{25n^2}}{(q^5; q^{25})_{n+1}(q^{20}; q^{25})_n}
	\right)\no\\&\quad + q  \frac{J_{25}}{[q^5; q^{25}]_\infty}+ (\zeta_5 + \zeta_5^{-1}) q^2   \frac{J_{25}}{[q^{10}; q^{25}]_\infty} 
	- (\zeta_5 + \zeta_5^{-1}) q^3 \no\\&\quad\times\left\{ \frac{[q^{5}; q^{25}]_\infty J_{25}}{[q^{10}; q^{25}]_\infty^2} - \frac{\zeta_5^2 + \zeta_5^{-2} - 2}{q^5} \left( \sum_{n=0}^\infty \frac{q^{25n^2}}{(q^{10}; q^{25})_{n+1}(q^{15}; q^{25})_n}-1\right) \right\},\label{idlo}
	\end{align}
	where $\zeta_c := e^{2\pi i/c}$ and \begin{align*}
		R(z ; q):&=\sum_{n=0}^{\infty} \sum_{m \in \mathbb{Z}} N(m, n) z^m q^n.
	\end{align*}
	
	
	Motivated by these works, ranks of partitions are widely studied. In \cite{ann}, Bringmann and Ono studied the modular properties of the generating function
	of Dyson's rank. As an application, they used these generating functions to construct families of weak Maass forms.
	To give their result, we need the definition of weak Maass forms of half-integral weight $k\in\frac{1}{2}\mathbb{Z}$.
	Throughout, we
	let $\tau=x+iy\in\mathbb{C}$ with $y>0,$ the hyperbolic Laplacian of weight $k$ is given by
	$$\Delta_k:=-y^2\left(\frac{\partial^2}{\partial x^2}+\frac{\partial^2}{\partial y^2}\right)+iky
	\left(\frac{\partial}{\partial x}+i\frac{\partial}{\partial y}\right).$$
	For odd integer $d$, define
	\begin{align*}
		\epsilon_d:=
		\left\{
		\begin{aligned}
			&1, \quad\textrm{if $d\equiv1\pmod{4}$},\\
			&i, \quad\textrm{if $d\equiv3\pmod{4}$}.
		\end{aligned}
		\right.
	\end{align*}
	\begin{definition}
		A weak Maass form of weight $k$ and Nebentypus $\chi$ on a subgroup $\Gamma\subset\Gamma_{0}(4)$ is
	a smooth function $f:\mathbb{H}\rightarrow\mathbb{C}$ satisfying:
	\begin{enumerate}[(1)]
		\item For all $A=\begin{pmatrix}
			\alpha & \beta  \\
			\gamma & \delta  \\
		\end{pmatrix}\in\Gamma,$ $f(A\tau)=\left(\frac{\gamma}{\delta}\right)^{2k}\epsilon_\delta^{-2k}\chi(\delta)(\gamma\tau+\delta)^kf(\tau).$
		\item
		We have that $\Delta_kf=0.$
		\item
		The function $f(\tau)$ has at most linear exponential growth at all cusps of $\Gamma.$
	\end{enumerate}
	We call a function $f:\mathbb{H}\rightarrow\mathbb{C}$ a weakly holomorphic modular form if
	the condition $(2)$ is replaced by that $f$ is holomorphic on $\mathbb{H}$. 	
	\end{definition}
Usually, a weak Maass form $f$ has a Fourier expansion of the form
	\begin{align}\label{inf}
		f(\tau)=\sum_{n=n_0}^\infty a(n)q^n+\sum_{n=1}^\infty b(n)\Gamma(1-k;4\pi ny)q^{-n}
	\end{align}
	where $a(n), b(n)\in\mathbb{C}$ and $\Gamma(a;x):=\int_{x}^\infty e^{-t}t^{a-1}dt.$
	We refer the two sums in \eqref{inf} as the \emph{holomorphic} and \emph{non-holomorphic} parts of $f$,
	respectively. We denote by \( \widetilde{\mathcal{M}}_{k/2}(N) \) the \(\mathbb{C}\)-vector space of weak Maass forms of weight \( k/2 \) on \( \Gamma_1(N) \).	
	The readers are referred to \cite{bb,o2} for more details.
	
Suppose that \( 0 < a < c \) are integers, and let \(\ell_c := \mathrm{lcm}(2c^2, 24), \widetilde{\ell}_c := \ell_c / 24, f_c := \frac{2c}{\gcd(c,6)} \). Define
\begin{align*}
	\Theta\left(\frac{a}{c}; \tau\right) :&= \sum_{m \pmod{f_c}} (-1)^m \sin\left(\frac{a\pi(6m+1)}{c}\right) \cdot \theta\left(6m+1, 6f_c; \frac{\tau}{24}\right),\no
 \intertext{and}
S_1\left(\frac{a}{c}; z\right) :&= \frac{-i \sin\left(\frac{\pi a}{c}\right) \ell_c^{\frac{1}{2}}}{\sqrt{3}}
 \int_{-\bar{z}}^{i\infty} \frac{\Theta\left(\frac{a}{c}; \ell_c \tau\right)}{\sqrt{-i(\tau + z)}}  \mathrm{d}\tau,
 \intertext{where}
 \theta(\alpha, \beta; \tau) :&= \sum_{n \equiv \alpha \pmod{\beta}} n e^{2\pi i \tau n^2}.
\end{align*}
Then Bringmann and Ono proved
\begin{theorem}(\cite[Theorem 1.1]{ann})\label{thann}
Define the group \(\Gamma_c\) by
\[
\Gamma_c := \left\{
\begin{pmatrix}
	1 & 1 \\
	0 & 1
\end{pmatrix}, 
\begin{pmatrix}
	1 & 0 \\
	\ell_c^2 & 1
\end{pmatrix}
\right\}.
\] Then \(D\left(\frac{a}{c}; z\right) := -S_1\left(\frac{a}{c}; z\right) + q^{-\frac{\ell_c}{24}} R(\zeta_c^a; q^{\ell_c})\) is a weak Maass form of weight \(1/2\) on \(\Gamma_c\).
\end{theorem}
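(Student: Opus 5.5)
We would follow Bringmann and Ono's strategy and verify the three defining conditions of a weak Maass form for $D\left(\frac{a}{c};z\right)$ on $\Gamma_c$.

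\emph{Step 1: express $R(\zeta_c^a;q)$ as an Appell--Lerch sum.} We start from the standard identity
\begin{align*}
R(w;q)=\frac{1-w}{(q;q)_\infty}\sum_{n\in\mathbb{Z}}\frac{(-1)^n q^{n(3n+1)/2}}{1-wq^n}.
\end{align*}
With $w=\zeta_c^a=e^{2\pi i a/c}$, this writes $q^{-1/24}R(\zeta_c^a;q)$ as $\eta(\tau)^{-1}$ times a level-$c$ Lerch sum. We then follow Zwegers' thesis: set $u=a/c$ (times a suitable multiple of $\tau$, after rescaling), and recognize the Lerch sum as a specialization of $\mu(u,v;\tau)$ or of its generalization to arbitrary rational parameters.

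\emph{Step 2: complete the Lerch sum and get the modular transformation.} Zwegers' completion $\widetilde\mu=\mu+\frac{i}{2}R(u-v;\tau)$ transforms like a Jacobi form of weight $1/2$ under $SL_2(\mathbb{Z})$. Specialising to torsion points gives vector-valued transformation laws for the family $\{R(\zeta_c^a;q)\}_a$, together with the twisted versions obtained by shifting $u$ by multiples of $\tau$. Combined with $\eta$, this yields a vector-valued nonholomorphic modular form of weight $1/2$.

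The nonholomorphic correction $R(u-v;\tau)$ at a rational point is a period integral of a weight-$3/2$ unary theta series. After the change $\tau\mapsto\ell_c\tau$, we identify it with $S_1\left(\frac{a}{c};z\right)$ and the series $\Theta\left(\frac{a}{c};\ell_c\tau\right)$. Here $\Theta$ collects the residues $6m+1 \pmod{6f_c}$ weighted by $(-1)^m\sin(a\pi(6m+1)/c)$; these weights come from expanding $(1-\zeta)$ times the error-function terms.

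\emph{Step 3: check the generators of $\Gamma_c$.} Invariance under $T=\begin{pmatrix}1&1\\0&1\end{pmatrix}$ follows from the $q$-expansion. For the holomorphic part, $q^{-\ell_c/24}R(\zeta_c^a;q^{\ell_c})$ is a series in integral powers of $q$ because $24\mid\ell_c$. For $S_1$, every exponent $\ell_c n^2/24$ with $n\equiv\pm1\pmod 6$ is integral. The generator $\begin{pmatrix}1&0\\ \ell_c^2&1\end{pmatrix}$ equals $ST^{-\ell_c^2}S^{-1}$ up to sign. We therefore apply the $S$-transformation from Step 2, which sends our component to a combination of components at the cusp $0$. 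Those components have $q$-expansions in $q^{1/\ell_c^2}$-type powers; here the choice $\ell_c=\mathrm{lcm}(2c^2,24)$ is designed so that the translation by $\ell_c^2$ acts trivially, including on all the roots of unity $\zeta_{2c^2}$ and the $\zeta_{24}$ multipliers of $\eta$. We then transform back.

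\emph{Step 4: the remaining conditions.} Harmonicity, $\Delta_{1/2}D=0$, holds because the holomorphic part is holomorphic. The nonholomorphic part is $\int_{-\bar z}^{i\infty}g(\tau)(-i(\tau+z))^{-1/2}\,d\tau$ with $g$ holomorphic of weight $3/2$, and $\xi_{1/2}$ of this is a multiple of $g$, which is holomorphic, so $\Delta_{1/2}=-\xi_{3/2}\xi_{1/2}$ kills it. Growth at the cusps follows from the explicit expansions at $\infty$ and $0$ obtained in Step 3.

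The main obstacle will be Step 3, specifically the bookkeeping in the $S$-transformation. The image at the cusp $0$ involves Lerch sums with $u$ shifted by $a\tau/c$ and theta functions with characteristics. We must track all the roots of unity and verify that they become trivial under $T^{\ell_c^2}$. We would handle this by carrying out the computation for general $(a,c)$ using Zwegers' elliptic shift formulas.
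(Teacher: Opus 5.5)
The paper does not prove this statement. It is quoted from Bringmann--Ono as background, so the natural comparison is with the paper's proof of its own analogue, Theorem~\ref{thm1}.

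Your plan is sound and uses the same machinery as that proof:
\begin{itemize}
\item an Appell--Lerch representation of the generating function;
\item Zwegers' completion by $R$;
\item identification of the correction term with the period integral via Zwegers' Theorem~1.16;
\item harmonicity. You argue that $\xi_{1/2}$ of the period integral is holomorphic; the paper instead differentiates the $R$-terms with Zwegers' Lemma~1.8, which is essentially the same computation.
\end{itemize}

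The real difference is how the transformation law is obtained. You use only the $S$- and $T$-laws and check the two generators, via $\begin{pmatrix}1&0\\ \ell_c^2&1\end{pmatrix}=ST^{-\ell_c^2}S^{-1}$. This identity is exact, not just up to sign. All the work then sits in the cusp-$0$ expansion: you must show it is invariant under translation by $\ell_c$ in the variable $\ell_c z$, which is what $\ell_c=\mathrm{lcm}(2c^2,24)$ is designed for.

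The paper instead inserts an arbitrary matrix of a congruence subgroup directly into Zwegers' $\mathrm{SL}_2(\mathbb{Z})$-law \eqref{appmo} for $\widehat{A}_3$. It then returns to the original arguments with the elliptic shifts \eqref{appel} (Proposition~\ref{pro1514}). This costs eta-multiplier bookkeeping, but it lands on a finite-index congruence subgroup, which Theorem~\ref{thm2} later needs. Growth is then checked at every cusp $\alpha/\gamma$ through Proposition~\ref{prorc} and \eqref{gth}. Your route produces exactly the infinite-index group $\Gamma_c$, and in exchange needs only one nontrivial cusp computation.

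Two things your sketch uses silently should be stated.

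First, reducing the transformation law to generators is legitimate. The required factor $\left(\frac{\gamma}{\delta}\right)\epsilon_\delta^{-1}\sqrt{\gamma z+\delta}$ is the theta automorphy factor, a genuine cocycle on $\Gamma_0(4)\supseteq\Gamma_c$. Its root-of-unity part equals $1$ on both generators; indeed $\Gamma_c\subseteq\Gamma_1(\ell_c^2)$. With principal branches, the $S$-transformation together with periodicity at $0$ gives exactly $D\left(\frac{a}{c};Az\right)=\sqrt{\ell_c^2z+1}\,D\left(\frac{a}{c};z\right)$ for $A=\begin{pmatrix}1&0\\ \ell_c^2&1\end{pmatrix}$.

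Second, checking growth only at $\infty$ and $0$ suffices because $\ell_c^2>4$. In that case $\Gamma_c$ has infinite index in $\mathrm{SL}_2(\mathbb{Z})$, with fundamental domain $\{|x|\le\frac12,\ |z\pm\ell_c^{-2}|\ge\ell_c^{-2}\}$. This domain meets $\mathbb{R}\cup\{\infty\}$ in cusps only at $\infty$ and $0$; the rest of its boundary on $\mathbb{R}$ consists of free sides.
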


This result is  recently strengthened by Garvan \cite{gar}. Moreover, he also obtained the following generalizing of \eqref{idlo}.
	 For primes $p>3$ and
 \( 1 \leq a \leq \frac{1}{2}(p - 1) \), let
	 \begin{align*}
	 \Phi_{p,a}(q) := 
	 \left\{
	 \begin{aligned}
	 	&	\sum_{n=0}^{\infty} \frac{q^{pn^2}}{(q^a; q^p)_{n+1}(q^{p-a}; q^p)_n}, && \text{if } 0 < 6a < p, \\
	 	&	-1 + \sum_{n=0}^{\infty} \frac{q^{pn^2}}{(q^a; q^p)_{n+1}(q^{p-a}; q^p)_n}, && \text{if } p < 6a < 3p,
	 \end{aligned}
	 \right.
	 \end{align*}
	 and
	 \begin{align*}
	 \mathcal{R}_p(z) := q^{-\frac{1}{24}} R(\zeta_p, q) - \chi_{12}(p)& \sum_{a=1}^{\frac{1}{2}(p-1)} (-1)^a \Bigg( \zeta_p^{3a+\frac{1}{2}(p+1)} + \zeta_p^{-3a-\frac{1}{2}(p+1)} \\&\qquad\qquad- \zeta_p^{3a+\frac{1}{2}(p-1)} - \zeta_p^{-3a-\frac{1}{2}(p-1)} \Bigg) q^{\frac{a}{2}(p-3a)-\frac{p^2}{24}} \Phi_{p,a}(q^p),
	 \end{align*}
	 where
	 \[
	 \chi_{12}(n) := \left( \frac{12}{n} \right) = 
	 \begin{cases} 
	 	1 & \text{if } n \equiv \pm 1 \pmod{12}, \\
	 	-1 & \text{if } n \equiv \pm 5 \pmod{12}, \\
	 	0 & \text{otherwise}.
	 \end{cases}
	 \]
	 Then there holds
	 \begin{theorem}(\cite[Theorem 1.2]{gar})\label{thgar}
	 	Let $\eta(\tau)$ denote the Dedekind's eta-function which is defined by
	 	$$\eta(\tau):=q^{\frac{1}{24}}(q;q)_\infty.$$
Then the function
	 	\[
	 	\eta(p^2z) \mathcal{R}_p(z)
	 	\]
	 	is a weakly holomorphic modular form of weight 1 on the group \( \Gamma_0(p^2) \cap \Gamma_1(p) \).
	 \end{theorem}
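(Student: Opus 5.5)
Our approach is to use the modular completion of the rank generating function at roots of unity (Theorem \ref{thann}, in Garvan's sharper form with the nonholomorphic correction written as a weight $3/2$ unary theta integral). We then show that in $\mathcal{R}_p$ the subtracted $\Phi_{p,a}$ terms exactly cancel the nonholomorphic part. Write $R(\zeta_p;q)$ as an Appell--Lerch sum,
\begin{align*}
R(\zeta_p;q)=\frac{1-\zeta_p}{(q)_\infty}\sum_{n\in\mathbb{Z}}\frac{(-1)^nq^{n(3n+1)/2}}{1-\zeta_pq^n},
\end{align*}
so that $\eta(p^2z)q^{-1/24}R(\zeta_p;q)$ becomes an eta-quotient (namely $\eta(p^2z)/\eta(z)$) times a level-$p$ Appell--Lerch sum. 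Zwegers' $\mu$-function completion, $\widehat\mu=\mu+\tfrac{i}{2}R$, shows that this Appell--Lerch sum is a harmonic Maass form of weight $1/2$ on $\Gamma_0(p^2)\cap\Gamma_1(p)$ after the rescaling $q\mapsto$ appropriate powers. Its shadow is a linear combination of unary theta functions $\sum_{n\equiv 6a+1 \ (\mathrm{mod}\ 6p)}$-type series, i.e.\ $\Theta(1/p;\cdot)$ in the notation above.

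Next we treat the terms $q^{\frac a2(p-3a)-\frac{p^2}{24}}\Phi_{p,a}(q^p)$. Each $\sum_n q^{pn^2}/((q^a;q^p)_{n+1}(q^{p-a};q^p)_n)$ is a rank-type (Watson/Ramanujan) mock theta function for the modulus $p$. By a standard $q$-series identity it equals $\frac{1}{(q^p;q^p)_\infty}\sum_n \frac{(-1)^nq^{p n(3n+1)/2 + \dots}}{1-q^{pn+a}}$, again an Appell--Lerch sum of level $p$. We would complete each with Zwegers' $R$-function as well; the shifts $-1$ when $p<6a<3p$ are the constants needed so that the $\mu$-representation holds with the principal branch. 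Its shadow is the unary theta function supported on $n\equiv \pm(6a\pm p)$ classes.

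The key step, and the main obstacle, is checking that the linear combination with coefficients
\[
\chi_{12}(p)(-1)^a\Bigl(\zeta_p^{3a+\frac{p+1}{2}}+\zeta_p^{-3a-\frac{p+1}{2}}-\zeta_p^{3a+\frac{p-1}{2}}-\zeta_p^{-3a-\frac{p-1}{2}}\Bigr)
\]
produces exactly the same unary theta shadow as $R(\zeta_p;q)$. This amounts to decomposing $\sin(\pi(6m+1)/p)$-weighted theta series by residues of $6m+1$ modulo $p$. Doing so produces, for each class $a$, the factor above, with $\chi_{12}(p)$ arising from rewriting $6m+1\equiv \pm(6a\pm\cdots)$. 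Once the nonholomorphic parts cancel, $\eta(p^2z)\mathcal{R}_p(z)$ is holomorphic on $\mathbb{H}$ and transforms with weight $1/2+1/2=1$. Finally, we verify the multiplier by combining the eta multiplier of $\eta(p^2z)$ with the Appell--Lerch multipliers, which are trivial on $\Gamma_0(p^2)\cap\Gamma_1(p)$. Growth at the cusps is at most exponential because each piece has a finite-order pole there, which gives a weakly holomorphic form.
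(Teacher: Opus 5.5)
The paper gives no proof of this statement; it is quoted from Garvan as background. Your plan is, however, exactly the template the paper uses for its odd-rank analogue, Theorem~\ref{thm2}:
\begin{itemize}
\item write the generating function as a completed Appell--Lerch sum (compare \eqref{ra3});
\item show that its nonholomorphic part is cancelled by the completions of the subtracted series (Proposition~\ref{nohr} and \eqref{holo});
\item multiply by an eta-quotient to remove the remaining multiplier.
\end{itemize}
The route is sound, and several of your predictions are right. The Appell--Lerch multipliers are indeed trivial on $\Gamma_0(p^2)\cap\Gamma_1(p)$. The shadow of the $a$-th subtracted term is supported on the classes $\pm(6a\pm p)\pmod{6p}$, which matches the residue-class structure of $\Theta(\frac1p;\cdot)$; the classes with $p\mid N$ drop out because $\sin(\pi N/p)=0$. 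But some steps are misstated, and the step that carries the content is only asserted.

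(i) Weights. $\sum_n(-1)^nq^{n(3n+1)/2}/(1-\zeta_pq^n)=\zeta_{2p}^{-3}A_3(\frac1p,-\tau;\tau)$ is a level-$3$ Appell--Lerch sum, not a single $\mu$. Its completion has weight $1$ by \eqref{appmo}. As you wrote it, the weight-$0$ quotient $\eta(p^2z)/\eta(z)$ times a weight-$\frac12$ form would give weight $\frac12$, not $1$.

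(ii) No rescaling. Rescaling is what confines Bringmann--Ono to the small group $\Gamma_c$, and it is exactly what Garvan's statement avoids. Instead, apply \eqref{appmo} with $B$ itself, followed by the elliptic shift \eqref{appel}, as in Proposition~\ref{pro1514}. For the $a$-th subtracted term, note that without its $-1$ and after multiplication by $\eta(p^2z)$ it equals $q^{-3a^2/2-pa}A_3(pa\tau,-p^2\tau;p^2\tau)$; for these terms use ${}^{p^2}B$ in \eqref{appmo}. The leftover signs are trivial for $B\in\Gamma_0(p^2)\cap\Gamma_1(p)$. Also, $\eta(p^2z)/\eta(z)$ is invariant on $\Gamma_0(p^2)$ because $p^2\equiv1\pmod{24}$.

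(iii) The $-1$ in $\Phi_{p,a}$ is not a branch issue. In the completion of the $a$-th term, one $R$-function (modulus $3p^2\tau$) has an argument whose imaginary ratio is $\frac ap+\frac13$. This leaves the strip $(-\frac12,\frac12)$ exactly when $p<6a<3p$. There $R$ is not the pure period integral of its shadow (Zwegers' Theorem 1.16; cf.\ \eqref{gth}). Moving it back into the strip with $R(u)+e^{-2\pi iu-\pi i\tau}R(u+\tau)=2e^{-\pi iu-\pi i\tau/4}$ produces precisely the monomial $-q^{-(6a-p)^2/24}$ in the normalization of $\mathcal{R}_p$.

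Shadow matching cannot detect such monomials, since $\xi$ kills holomorphic functions. So your key step must compare the actual $R$-terms, not just the shadows. The efficient way is that of Proposition~\ref{nohr}:
\begin{itemize}
\item take the two $R$-terms of the completed $q^{-1/24}R(\zeta_p;q)$, which have arguments $\frac3p\pm\tau-1$ in modulus $3\tau$;
\item split them by \eqref{idr} with $\ell=p$ into in-strip $R$-functions of modulus $3p^2\tau$;
\item identify these, coefficients included, with the in-strip parts of the $\Phi$-completions.
\end{itemize}
That computation is the analogue of \eqref{r1}--\eqref{r2}. It is where $(-1)^a$, the combination $\zeta_p^{\pm(3a+\frac{p\pm1}{2})}$ and the factor $\chi_{12}(p)$ must actually be derived rather than predicted. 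Until it is done, what you have is a correct plan rather than a proof.
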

	 
	 For more works on the modularity of the rank type generating functions, the reader
	 is referred to \cite{ah,lo1,lo2,fq,cjs1,cjs2,mao}. In this paper, we consider modular properties of the generating functions of odd ranks, extending the works of Bringmann-Ono and Garvan.
	 
	 To provide partition-theoretic interpretations to Watson's the third order mock theta function $\omega(q)$ \cite{wat}, Andrews \cite{an} studied the odd Durfee symbols.
	 \allowdisplaybreaks
	 \begin{definition}\label{defodd}
	 	An odd Durfee symbol of $n$ is a two-rowed array with a subscript of the form
	 	\begin{align*}
	 		\begin{pmatrix}
	 			a_{1}& a_{2}& \cdots& a_{s}\\
	 			b_{1}& b_{2}& \cdots& b_{t}
	 		\end{pmatrix}_{D}
	 	\end{align*}
	 	wherein all entries are odd numbers such that
	 	\begin{enumerate}[(1)]
	 		\item $2D+1\geq a_{1}\geq a_{2}\geq \cdots \geq a_{s}\ge 0$;
	 		\item $2D+1\geq b_{1}\geq b_{2}\geq \cdots \geq b_{t}\ge 0$;		
	 		\item $n=\sum_{i=1}^{s}a_{i}+\sum_{j=1}^{t}b_{j}+2D^{2}+2D+1$.
	 	\end{enumerate}
	 \end{definition}
	 We remark that
	 Definition \ref{defodd} is made by Ji \cite{ji} and is equivalent to Andrews' original definition.
	 Andrews also defined the odd rank of an odd Durfee symbol
	 to be the number of entries in the top row minus the number of entries in the bottom row. 
	 Motivated by Andrews' work, odd ranks are widely studied.
	 Let $N^0(m,n)$ denote the number of odd Durfee symbols of $n$ with odd rank $m$ and
	 	\begin{align}
	 	N^0(m,k,n):=\sum_{t\equiv m\pmod{k}}	N^0(t,n).\no
	 \end{align}
	 
	 Wang \cite{wang} established identities between odd ranks modulo $8$. For example, he obtained the following analogs of \eqref{1id5}:
	 \begin{align*}
	 	\sum_{n=0}^{\infty}(N^0(0,8,8n+1)-N^0(4,8,8n+1))q^n&=\frac{J_2^4}{J_1^2J_4},\\
	 	 	\sum_{n=0}^{\infty}(N^0(1,8,8n+2)-N^0(3,8,8n+2))q^n&=\frac{J_2J_4}{J_1}.
	 \end{align*}
	 Moreover, Wang also proved that, for $n\geq0$,
	 \begin{align}
	 	N^0(0,8 ,8 n+r)&=N^0(4,8 , 8 n+r), & r \in\{5,7\} \no
	 	\intertext{and}
	 	N^0(1,8 , 8 n+r)&=N^0(3,8 , 8 n+r), & r \in\{4,6\}.\no
	 \end{align}
	 
	 Cui and Gu \cite{cui} studied the generating functions of odd ranks modulo $3, 6$ and showed that 
	 \begin{align*}
	 	\sum_{n=0}^{\infty}(N^0(0,3,n)-N^0(1,3,n))q^n&=q\sum_{n=0}^{\infty}\frac{q^{2n(n+1)}(q;q^{2})_{n+1}}{(q^3;q^{6})_{n+1}},
	 	\\
	 	\sum_{n=0}^{\infty}(N^0(0,6,n)-N^0(3,6,n))q^n&=q\frac{(q^3;q^{3})^2_{\infty}(q^{12};q^{12})^2_{\infty}}{(q^2;q^2)_{\infty}(q^6;q^{6})^2_{\infty}}.
	 \end{align*}
	 More recently, Xia \cite{xia} considered odd ranks modulo $12$ and found 
	 \begin{align*}
	\sum_{n=0}^{\infty}(N^0(0,12,4n+3)-N^0(6,12,4n+3))q^n&=\frac{J_6^4}{J_2J^2_3},\\
		\sum_{n=0}^{\infty}(N^0(0,12,4n+1)-N^0(6,12,4n+1))q^n&=\frac{J_2^3}{J^2_1}.
	 \end{align*}
	
%
In this paper, we study transformation properties of the odd rank generating functions
	\begin{align*}
		R^0(z ; q)&:=\sum_{n=1}^{\infty} \sum_{m \in \mathbb{Z}} N^0(m, n) z^m q^n\\
			&= \sum_{n=0}^\infty \frac{q^{2n(n+1)+1}}{(zq;q^2)_{n+1}(z^{-1}q;q^2)_{n+1}}. \qquad\qquad\textrm{(\cite[Eq. (8.3)]{an})}
	\end{align*}
	
For integers $0<a<c$ and $(a,c)=1$,	
we define
	\begin{align}
		\mathscr{R}^0(a,c; \tau)&=q^{-\frac{1}{3}}	R^0(\zeta_c^a ; q)\label{defr0}
		\intertext{and}
		\widehat{\mathscr{R}^0}(a,c ; \tau)&=	\mathscr{R}^0(a,c; \tau)-\frac{\sqrt{6}}{2}\times\Bigg(e^{-\frac{\pi i}{6}}\int_{-\overline{\tau}}^{i \infty} \frac{g_{\frac{2}{3}, \frac{1}{2}-\frac{3a}{c}}(6z)}{\sqrt{-i(z+\tau)}} d z\no\\&\qquad\qquad\qquad\qquad\qquad\qquad+e^{\frac{\pi i}{6}}\int_{-\overline{\tau}}^{i \infty} \frac{g_{\frac{1}{3}, \frac{1}{2}-\frac{3a}{c}}(6z)}{\sqrt{-i(z+\tau)}} d z\Bigg),\label{defhr}
	\end{align}
	where for $\alpha,\beta\in \mathbb{R}$ \[
	g_{\alpha,\beta}(\tau) := \sum_{\nu \in \alpha + \mathbb{Z}} \nu e^{\pi i \nu^2 \tau + 2 \pi i \nu \beta}.
	\]
	Then one of our main results is the following analog of Theorem \ref{thann}:
	\begin{theorem}\label{thm1}
	$\widehat{\mathscr{R}^0}(a,c ; 18\tau)$ is a weak Maass form of weight $1/2$ on $\Gamma_0(864)\cap\Gamma_0(72c^2)\cap\Gamma_1(8)\cap\Gamma_1(4c)$.
	\end{theorem}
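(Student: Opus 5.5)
The plan is to write $R^0(\zeta_c^a;q)$ as an Appell--Lerch sum, identify it with a specialization of Zwegers' $\mu$-function, and read off modularity from Zwegers' completion. First, a standard partial-fraction/Bailey pair argument (as in Andrews' paper, where $R^0(z;q)$ is related to $\omega(q)$) gives an expression of the shape
\begin{align*}
R^0(z;q)=\frac{q}{(q^2;q^2)_\infty}\sum_{n\in\mathbb{Z}}\frac{(-1)^n q^{3n^2+3n}(1+q^{2n+1})}{1-zq^{2n+1}}\cdot(\text{simple factor in } z),
\end{align*}
i.e.\ a level-$3$ Appell--Lerch sum in $q^2$. The first step is to derive or quote such an identity precisely. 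The second is to rewrite it, after the normalization $q^{-1/3}$ from \eqref{defr0}, as a combination of $\mu(u,v;6\tau)$, or more conveniently of the two-variable function $A_3$, or of $\mu$ evaluated at shifted arguments $u=\frac{a}{c}+\text{const}\cdot\tau$, $v=\text{const}\cdot\tau$, divided by $\eta$-quotients. Splitting the sum over $n$ modulo $3$ is what produces the two pieces indexed by $\alpha=\frac13,\frac23$ in \eqref{defhr}.

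Next apply Zwegers' theorem. The completion $\widehat\mu=\mu+\frac{i}{2}R(u-v)$ transforms like a Jacobi form of weight $1/2$. Here the nonholomorphic correction $R(u-v;\tau)$, for $u-v=a\tau'+b$ with rational $a,b$, equals a period integral $\int_{-\bar\tau}^{i\infty} g_{a+1/2,b+1/2}(z)/\sqrt{-i(z+\tau)}\,dz$ (Zwegers, Prop.~1.5 / 4.3). Matching parameters should give exactly the integrals of $g_{2/3,\frac12-\frac{3a}{c}}(6z)$ and $g_{1/3,\frac12-\frac{3a}{c}}(6z)$ with the constants $\frac{\sqrt6}{2}e^{\mp\pi i/6}$. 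This shows that $\widehat{\mathscr{R}^0}(a,c;\tau)$ is, up to an eta-quotient and roots of unity, a combination of completed $\widehat\mu$'s at torsion points.

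Then prove the transformation law. Using the elliptic and modular transformations of $\widehat\mu$ (Zwegers Prop.~1.11), together with $\eta$'s multiplier, one computes the behavior of $\widehat{\mathscr{R}^0}(a,c;\tau)$ under $T$ and $S$, or better directly under a general $\gamma\in\Gamma_0(N)$ after rescaling $\tau\mapsto18\tau$. The congruence conditions ($72c^2$ and $4c$ kill the characters coming from the torsion point $a/c$; $864$ and $8$ handle the $6\tau$ scaling, the quadratic exponents $q^{1/3}$ and the eta multiplier) are chosen so that the elliptic shifts of the torsion points and all accumulated roots of unity cancel. This leaves the theta-type multiplier $\left(\frac{\gamma}{\delta}\right)\epsilon_\delta^{-1}(\gamma\tau+\delta)^{1/2}$. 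Harmonicity follows because the holomorphic part is holomorphic and each nonholomorphic part has the form $\sum b(n)\Gamma(\frac12;4\pi ny)q^{-n}$, which is annihilated by $\Delta_{1/2}$. Growth at cusps follows from the transformation formulas, since they carry every cusp to $\infty$ with finitely many $q$-power singularities, plus exponentially decaying error integrals of Mordell type.

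The main obstacle is the bookkeeping in the transformation step. One must track the roots of unity from the torsion point $\zeta_c^a$, the elliptic shifts of $\mu$ (whose quasi-periodicity produces theta-quotient terms that have to vanish or be absorbed), and the eta multiplier, and verify that they trivialize exactly on the stated intersection of groups. My first approach would be to express everything via the vector-valued transformation of $g_{\alpha,\beta}$ under the Weil representation. I would check the group on generators of $\Gamma_1(N)$-type matrices, using the standard formula $\theta$-multiplier for $g_{\alpha,\beta}(\gamma\tau)$ when $\gamma\equiv I$ modulo the denominators of $\alpha,\beta$.
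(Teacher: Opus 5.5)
Your outline follows the paper's proof. Proposition~\ref{pro828} writes $\widehat{\mathscr{R}^0}(a,c;\tau)$ as $\zeta_{2c}^{-3a}q^{-3/4}\widehat{A_3}(\tau+\frac{a}{c},0;2\tau)/\eta(2\tau)$ and also as two $\widehat{\mu}$'s; the two $g_{\alpha,\beta}$ integrals come from the two surviving terms of the $\widehat{A_3}$ completion, not from splitting $n$ mod $3$. Proposition~\ref{pro1514} then does the bookkeeping you defer, using Zwegers' modular and elliptic laws, and for $k=18$ it reduces to $\nu({}^{36}B)=\left(\frac{\gamma}{\delta}\right)(-1)^{\beta}$. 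Cusp growth comes from the $\mathrm{SL}_2(\mathbb{Z})$ formula for $\widehat{\mu}$ in Proposition~\ref{prorc}.

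The only difference is harmonicity. You read it off from the $\Gamma(\frac12;4\pi ny)q^{-n}$ shape of the nonholomorphic part (which the paper also derives, in Proposition~\ref{nohr}). The paper instead checks directly that $y^{1/2}\partial_{\bar\tau}$ of the $R$-terms is antiholomorphic. Both arguments work.
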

The proof of Theorem \ref{thm1} is motivated by works in \cite{cjs1,cjs2,mao} and relies on  Zwegers' works on Appell–Lerch sums \cite{mock,zew1}.
Applying the transformation formations obtained in the proof of Theorem \ref{thm1}, we are able to construct families of weakly modular forms as Garvan did in Theorem \ref{thgar}.
Let
\begin{align}
	& S(r, c ; \tau)=\frac{i^{1-c}q^{c(6r+2-c)-3r^2-2r}}{\left( q^{2 c^2} ; q^{2 c^2}\right)_{\infty}} \sum_{n=-\infty}^{\infty} \frac{(-1)^{n } q^{3 c^2 n^2+5 c^2 n}}{1+(-1)^cq^{6 c^2 n+(2+6r-c) c}}.
\end{align}
Then there holds
\begin{theorem}\label{thm2}
	\begin{enumerate}[(i)]
		\item 
			The function 
		\begin{align}
			\frac{\eta^5\left(2\tau\right)}{\eta^2(\tau)}\left(	\mathscr{R}^0(a,c; \tau)-i^{1-c}q^{-\frac{1}{3}}\sum_{r=0}^{c-1}(-1)^r\left(\zeta_c^{(3r+1)a}+\zeta_c^{-(3r+1)a}\right)S(r, c ; \tau)\right)
		\end{align}
		is a weakly holomorphic modular form of weight $2$ on $\Gamma_0\left([\gcd(c,2)]^212c^2\right)\cap\Gamma_1(6c).$
		
		\item
		If $3\nmid c$, then the function 
		\begin{align}
			\frac{\eta^5\left(2c^2\tau\right)}{\eta^2(c^2\tau)}\left(	\mathscr{R}^0(a,c; \tau)-i^{1-c}q^{-\frac{1}{3}}\sum_{r=0}^{c-1}(-1)^r\left(\zeta_c^{(3r+1)a}+\zeta_c^{-(3r+1)a}\right)S(r, c ; \tau)\right)
		\end{align}
		is a weakly holomorphic modular form of weight $2$ on $\Gamma_0\left([\gcd(c,2)]^212c^2\right)\cap\Gamma_1(6c).$
	\end{enumerate}
\end{theorem}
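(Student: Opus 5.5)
Our plan is to prove Theorem \ref{thm2} in the way Garvan proved Theorem \ref{thgar}, starting from the transformation formulas behind Theorem \ref{thm1}. The first step is to write $R^0(\zeta_c^a;q)$ as an Appell--Lerch sum. Andrews' formula \cite[Eq. (8.3)]{an} together with a standard partial fraction argument gives
\[
R^0(z;q)=\frac{(1-z)(1-z^{-1})\,q}{(q^2;q^2)_\infty}\sum_{n}\frac{(-1)^n q^{3n^2+3n}}{(1-zq^{2n+1})(1-z^{-1}q^{2n+1})}\quad(\text{up to normalization}).
\]
This can be rewritten in terms of Zwegers' $\mu$-function with modulus $q^2$ and elliptic variable depending on $a/c$. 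Multiplying by $\eta^5(2\tau)/\eta^2(\tau)$, which is the weight $3/2$ theta function $\sum_n (-1)^n(3n+1)q^{(3n+1)^2/3}$-type object, or more simply the normalizing eta quotient of the Appell sum, produces a weight $2$ object. The nonholomorphic completion from \eqref{defhr} is then the product of this theta quotient with the $g_{\alpha,\beta}$-integrals.

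The second step is the dissection of the Appell--Lerch sum. Write $n=cm+r$ with $0\le r\le c-1$. Since $\zeta_c^{c}=1$, the denominators $1-\zeta_c^{\pm a}q^{2n+1}$ can be combined over $m$. We then expand using the identity $\frac{1}{1-x}=\frac{1+x+\dots+x^{c-1}}{1-x^c}$, which turns each $r$-piece into an Appell sum in $q^{c^2}$ with no root of unity in the elliptic variable. This is Garvan's mechanism.

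Among the resulting terms, the "boundary" terms, where the shifted elliptic variable reaches a degenerate value (a half-period after shifting), are exactly the sums $S(r,c;\tau)$, which carry coefficients $(-1)^r(\zeta_c^{(3r+1)a}+\zeta_c^{-(3r+1)a})$. The remaining terms are Appell sums whose elliptic parameters are rational multiples of $\tau$ plus rationals. For these we invoke Zwegers' result that $\mu(u,v)$ minus its $R$-correction transforms like a Jacobi form. The key observation is that after subtracting the $S$-terms, the nonholomorphic corrections $R(u-v)$ either cancel in pairs or vanish identically; in particular, the shadow given by the $g_{1/3,\cdot}$ and $g_{2/3,\cdot}$ integrals in \eqref{defhr} is exactly what the $S$-terms account for. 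Consequently the difference is holomorphic, and it is modular by the transformation law used in proving Theorem \ref{thm1}.

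Third, we verify modularity on $\Gamma_0([\gcd(c,2)]^212c^2)\cap\Gamma_1(6c)$. For each generator-type matrix, the elliptic transformation of $\mu$ together with the multiplier of $\eta^5(2\tau)/\eta^2(\tau)$ (a weight $3/2$ eta quotient of known level $4$ character) must give a trivial total multiplier. We check this by tracking the multipliers of the theta functions $\vartheta$ in $\mu$, and the resulting congruence conditions produce the level. We then check holomorphy on $\mathbb{H}$, since poles of $\mu$ occur only where the shifted variable lies in the lattice, which does not happen for $0<a<c$ with $(a,c)=1$, and the $S$-terms have denominators without zeros. Growth at the cusps is at most exponential, because each piece has that property.

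For (ii), when $3\nmid c$, we replace the multiplier $\eta^5(2\tau)/\eta^2(\tau)$ by $\eta^5(2c^2\tau)/\eta^2(c^2\tau)$. The quotient of the two multipliers is a weight $0$ modular function on the stated group, and the condition $3\nmid c$ is what ensures that the character of $\eta^5(2c^2\tau)/\eta^2(c^2\tau)$ coincides with the one needed on $\Gamma_1(6c)$, given that $c^2\equiv1\pmod 3$.

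The main obstacle is the cancellation of all nonholomorphic parts after subtracting the $S$-terms. This requires matching the $R$-function corrections at each $r$ with the $g_{\alpha,\beta}$ shadows, and showing that the degenerate terms are exactly $S(r,c;\tau)$ with the precise prefactor $i^{1-c}q^{c(6r+2-c)-3r^2-2r}$. We would first attempt this by direct computation with Zwegers' identities $R(u+\tau)$ and $\mu(u+k\tau+l,v+m\tau+n)$, separately for $c$ even and $c$ odd; the parity split is suggested by the factors $(-1)^c$ and $\gcd(c,2)$.
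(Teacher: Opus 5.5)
There is a genuine gap: the two steps that carry the proof are asserted rather than proved, and the tools you propose for them would not prove them.

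\textbf{The shadow matching.} The $S(r,c;\tau)$ are not terms of the dissection you describe. Apply $\frac{1}{1-x}=\frac{1+x+\cdots+x^{c-1}}{1-x^c}$ and $n=cm+r$ to Andrews' formula $R^0(z;q)=\frac{1}{(q^2;q^2)_\infty}\sum_n\frac{(-1)^nq^{3n^2+3n+1}}{1-zq^{2n+1}}$. You get $c^2$ level-$3$ sums with coefficients $\zeta_c^{aj}$, prefactor $1/(q^2;q^2)_\infty$ and denominators $1-q^{c(2cm+2r+1)}$. By contrast, $q^{-1/3}S(r,c;\tau)=-iq^{-(3c-2-6r)^2/12}\mu(c(6r+2-c)\tau-\frac{c-1}{2},2c^2\tau;6c^2\tau)$ is a level-$1$ object on modulus $6c^2\tau$ carrying $\zeta_c^{\pm(3r+1)a}$. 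Producing these would need a genuine change-of-modulus identity for Appell--Lerch sums, and its theta-quotient remainder would then have to be shown modular on the stated group. (Your Step 1 formula is also wrong: the factor $(1-z)(1-z^{-1})$ makes it vanish at $z=1$, while $R^0(1;q)=q+\cdots$.)

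The paper never dissects the holomorphic part. It takes the nonholomorphic part of $\widehat{\mathscr{R}^0}(a,c;\tau)$, namely $\frac{q^{-1/12}}{2}(\zeta_{2c}^{-a}R(\tau+\frac{3a}{c};6\tau)+\zeta_{2c}^{a}R(\frac{3a}{c}-\tau;6\tau))$. It rewrites this on modulus $6c^2\tau$ using Zwegers' splitting identity $R(u;\tau/\ell)=\sum_{r=0}^{\ell-1}(\text{explicit factor})\,R(\ell u-r\tau+(\ell-1)(\tau+1)/2;\ell\tau)$ with $(\tau,\ell,u)\mapsto(6c\tau,c,\pm\tau+3a/c)$. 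The result is exactly $i^{1-c}\sum_r(-1)^r(\zeta_c^{(3r+1)a}+\zeta_c^{-(3r+1)a})\cdot\frac12q^{-(3c-2-6r)^2/12}R(c(6r+2-3c)\tau-\frac{c-1}{2};6c^2\tau)$, i.e. the shadows of the completed $S$-terms. The tools you name for your ``main obstacle'' are the elliptic shift laws for $R$ and $\widehat\mu$. These keep the modulus fixed, so they cannot take you from $6\tau$ to $6c^2\tau$.

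\textbf{Modularity of the difference.} Holomorphy of the difference does not give modularity. The step ``it is modular by the transformation law used in proving Theorem 1'' is not valid, because that law concerns $\widehat{\mathscr{R}^0}$ only. The missing ingredient is that each completion $\widetilde{\mathcal{S}}(r,c;\tau)=q^{-1/3}S(r,c;\tau)+\frac12q^{-(3c-2-6r)^2/12}R(c(6r+2-3c)\tau-\frac{c-1}{2};6c^2\tau)$ is itself modular of weight $1/2$ on $\Gamma_0([\gcd(c,2)]^2\cdot 12c^2)\cap\Gamma_1(6c)$. It must also carry the same multiplier $\nu^2(B)/\nu^5({}^2B)$ that $\widehat{\mathscr{R}^0}$ has on $\Gamma_0(4c^2)\cap\Gamma_1(c)$.

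The paper proves this as follows. It writes $\widetilde{\mathcal{S}}=\widehat{G}/\eta(2c^2\tau)$, with $\widehat{G}$ a power of $q$ times $\widehat{A_1}(c(6r+2-c)\tau-\frac{c-1}{2},2c^2\tau;6c^2\tau)$. It then applies Zwegers' modular and elliptic laws for $\widehat{A_1}$. Finally it simplifies the leftover phase with $\nu({}^{2c^2}B)/\nu({}^2B)=e^{\pi i\beta(c^2-1)/6}$ and the formula for $\nu^2(B)/\nu^4({}^2B)$. Requiring the elliptic shifts to be integral and the phases to cancel is where the level $[\gcd(c,2)]^2\cdot 12c^2$ and $\Gamma_1(6c)$ come from. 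Your Step 3 promises to ``track multipliers'' but never identifies which objects must share a multiplier, and never derives this level.

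With both facts in hand, the difference equals $\mathscr{R}^0-i^{1-c}q^{-1/3}\sum_r(\cdots)S(r,c;\tau)$. Multiplying by $\eta^5(2\tau)/\eta^2(\tau)$, which has weight $3/2$ and multiplier $\nu^5({}^2B)/\nu^2(B)$, gives (i). Your reasoning for (ii) via $c^2\equiv1\pmod 3$ is then correct, since the extra multiplier is $e^{2\pi i\beta(c^2-1)/3}$.
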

Comparing to Theorem \ref{thgar}, we do not require $c$ to be a prime number in Theorem \ref{thm2}.
%
%
As an application of part (ii) of Theorem \ref{thm2}, we establish an analog of \eqref{idlo}.
\begin{theorem}\label{thm3}
	We have
	\begin{align}\label{r5}
	R^0(\zeta_5 ; q)&=-2S(3,5;\tau)+(\zeta_5+\zeta_5^4)\left(S(0,5;\tau)-S(1,5;\tau)\right)
		+(\zeta_5^2+\zeta_5^3)\left(S(2,5;\tau)+S(4,5;\tau)\right)
		\no	\\&\quad+\frac{q^5J_{150}}{[q^{15},q^{50};q^{150}]_\infty}-\frac{J_{50}[q^{25};q^{50}]_\infty}{[q^{10};q^{50}]_\infty^2}
		+\frac{qJ_{50}[q^{15};q^{50}]_\infty}{[q^{10};q^{50}]_\infty^2}
		-\frac{q^2J_{50}[q^{15};q^{50}]^2_\infty}{[q^{10},q^{10},q^{25};q^{50}]_\infty}
		\no	\\&\quad-\frac{q^{17}J_{150}}{[q^{50},q^{75};q^{150}]_\infty}
		+\frac{q^3J_{50}[q^5,q^{25};q^{50}]_\infty}{[q^{10},q^{10},q^{15};q^{50}]_\infty}
		-\frac{q^4J_{50}[q^5;q^{50}]_\infty}{[q^{10};q^{50}]^2_\infty}
		\no\\&\quad	+(\zeta_5^2+\zeta_5^3)\Bigg(\frac{q^5J_{150}}{[q^{15},q^{50};q^{150}]_\infty}-\frac{J_{50}[q^{15};q^{50}]^2_\infty}{[q^{5},q^{10},q^{20};q^{50}]_\infty}-q^6\frac{J_{50}[q^{5};q^{50}]_\infty}{[q^{10},q^{20};q^{50}]_\infty}
		\no\\&\qquad\qquad\qquad\quad-q^2\frac{J_{50}[q^{25};q^{50}]_\infty}{[q^{10},q^{20};q^{50}]_\infty}
		+q^3\frac{J_{50}[q^{15};q^{50}]_\infty}{[q^{10},q^{20};q^{50}]_\infty}+q^9\frac{J_{50}[q^{5};q^{50}]^2_\infty}{[q^{10},q^{15},q^{20};q^{50}]_\infty}
		\no\\&\qquad\qquad\qquad\quad+\frac{q^{14}J_{150}}{[q^{45},q^{50};q^{150}]_\infty}\Bigg).
	\end{align}
\end{theorem}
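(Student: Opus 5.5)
The plan is to use part (ii) of Theorem \ref{thm2} with $c=5$, $a=1$; this is allowed since $3\nmid 5$. Define
\[
F(\tau):=\mathscr{R}^0(1,5;\tau)-i^{-4}q^{-\frac13}\sum_{r=0}^{4}(-1)^r\left(\zeta_5^{3r+1}+\zeta_5^{-(3r+1)}\right)S(r,5;\tau).
\]
Reducing $3r+1 \pmod 5$ for $r=0,\dots,4$ gives the residues $1,4,2,0,3$. The coefficients $\zeta_5^{3r+1}+\zeta_5^{-(3r+1)}$ are therefore $\zeta_5+\zeta_5^4$, $\zeta_5+\zeta_5^4$, $\zeta_5^2+\zeta_5^3$, $2$, $\zeta_5^2+\zeta_5^3$. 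With the signs $(-1)^r$ and $i^{-4}=1$, the $S$-terms combine into exactly the first line of \eqref{r5}. So after multiplying by $q^{1/3}$, it remains to prove that
\[
q^{\frac13}F(\tau)=\text{(eta-quotient part of \eqref{r5})}=:G_0(q)+(\zeta_5^2+\zeta_5^3)G_1(q),
\]
where $G_0$ and $G_1$ are the explicit sums of theta quotients in $q$ with period $50$ or $150$.

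By Theorem \ref{thm2}(ii), $H(\tau):=\frac{\eta^5(50\tau)}{\eta^2(25\tau)}F(\tau)$ is a weakly holomorphic modular form of weight $2$ on $\Gamma_0(1200)\cap\Gamma_1(30)$. I would also show that $\frac{\eta^5(50\tau)}{\eta^2(25\tau)}q^{-1/3}\bigl(G_0+(\zeta_5^2+\zeta_5^3)G_1\bigr)$ is a weakly holomorphic modular form of weight $2$ on the same group. Each term $J_{50}[q^{a_1},\dots;q^{50}]_\infty/[\dots]_\infty$ and $J_{150}/[q^{b},q^{50};q^{150}]_\infty$ is a quotient of generalized eta functions $\eta_{\delta,g}$. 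Multiplied by $\eta^5(50\tau)/\eta^2(25\tau)$, each term has weight $2$, since every bracket quotient has weight $0$ and the eta prefactor carries the weight. Robins' criterion (the generalized-eta analogue of Ligozat's conditions) then verifies modularity on $\Gamma_1(1200)$, or on the stated group, term by term. The fractional powers $q^{k}$ in front must match the $q^{-1/3}$ shift and the eta exponents; checking this is a routine bookkeeping computation.

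The difference $D(\tau)$ is thus a weakly holomorphic modular form of weight $2$ on $\Gamma:=\Gamma_0(1200)\cap\Gamma_1(30)$. To show $D\equiv0$, I apply the valence formula (Sturm bound). First I compute the orders of $D$ at every cusp of $\Gamma$. For the generalized eta quotients this follows from Ligozat/Robins order formulas. For $F$ it follows from the transformation formulas obtained in the proof of Theorem \ref{thm2}, which give the expansion of $\mathscr{R}^0$ and of the $S$-terms at each cusp. These yield a lower bound $\sum_{s\ne\infty}\mathrm{ord}_s D\ge -B$. Then $D\equiv0$ as soon as the $q$-expansion at $\infty$ vanishes beyond order $B+\frac{2}{12}[\mathrm{SL}_2(\mathbb Z):\pm\Gamma]$, and this finite check is done by computing sufficiently many coefficients of both sides of \eqref{r5}. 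The check can be done separately for the rational part and for the $(\zeta_5^2+\zeta_5^3)$ part, because $1$ and $\zeta_5^2+\zeta_5^3$ are linearly independent over $\mathbb Q$. It can also be split according to residue classes of exponents modulo $5$, using $U$/$V$ operators to reduce the level.

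The main obstacle is the cusp analysis: bounding the poles of $F$ and of the $S(r,5;\tau)$ terms at all cusps of a group of large index. Garvan's approach is the model here. I would first use the symmetry $\tau\mapsto\tau+1$ and the action of $\Gamma_0(1200)$ to reduce to a small set of cusp representatives $1/\gamma$ with $\gamma\mid1200$. The explicit transformation formulas for $\mathscr{R}^0$ should show that the only poles come from the Appell--Lerch sums that are cancelled by the $S$-terms, so $F$ is bounded by the orders of $\frac{\eta^5(50\tau)}{\eta^2(25\tau)}$ itself. Once the bound is established, the identity reduces to a finite, though sizable, coefficient comparison.
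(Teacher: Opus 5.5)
Your proposal is correct and is essentially the paper's argument. Both use Theorem \ref{thm2}(ii) with $c=5$, Robins' criterion for the generalized eta quotients, and lower bounds at the cusps for $\widehat{\mathscr{R}^0}$ and $\widetilde{\mathcal{S}}$ from Propositions \ref{prorc} and \ref{prosc}, read off via the $\widehat{\mu}$-order bound of Proposition \ref{procuu}. Both then finish with the valence formula and a finite coefficient check; the paper verifies $12000$ coefficients against a cusp bound of $-11505$.

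The differences are cosmetic. The paper divides by the weight-$2$ form $\eta^5(150\tau)/\eta(30\tau)$, so everything becomes a weight-$0$ modular function on $\Gamma_1(300)$ and Robins' theorem applies verbatim. Theorem \ref{thm2}(ii) actually gives $\Gamma_0(300)\cap\Gamma_1(30)$ (since $\gcd(5,2)=1$), not your $\Gamma_0(1200)\cap\Gamma_1(30)$; this is harmless. Finally, you don't need your heuristic that $F$ has no extra poles at the cusps, since one simply takes the minimum of the termwise lower bounds.
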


Applying Theorem \ref{thm3}, we obtain the following $5$-dissections.
\begin{corollary}\label{coro}
	We have 
	\begin{align}
		&\sum_{n=1}^{\infty}\left(N^0(0,5 ; n)-N^0(1,5 ; n)\right) q^n\no
		\\&=S(1,5;\tau)-S(0,5;\tau)-2S(3,5;\tau)+\frac{q^5J_{150}}{[q^{15},q^{50};q^{150}]_\infty}-\frac{J_{50}[q^{25};q^{50}]_\infty}{[q^{10};q^{50}]_\infty^2}
		\no	\\&\quad
		+\frac{qJ_{50}[q^{15};q^{50}]_\infty}{[q^{10};q^{50}]_\infty^2}
		-\frac{q^2J_{50}[q^{15};q^{50}]^2_\infty}{[q^{10},q^{10},q^{25};q^{50}]_\infty}
-\frac{q^{17}J_{150}}{[q^{50},q^{75};q^{150}]_\infty}		\no	\\&\quad
		+\frac{q^3J_{50}[q^5,q^{25};q^{50}]_\infty}{[q^{10},q^{10},q^{15};q^{50}]_\infty}
		-\frac{q^4J_{50}[q^5;q^{50}]_\infty}{[q^{10};q^{50}]^2_\infty}\label{co01}
	\intertext{and}
		&\sum_{n=1}^{\infty}\left(N^0(1,5 ; n)-N^0(2,5 ; n)\right) q^n\no
	\\&=S(0,5;\tau)-S(1,5;\tau)-S(2,5;\tau)-S(4,5;\tau)	-\frac{q^5J_{150}}{[q^{15},q^{50};q^{150}]_\infty}
		\no\\&\quad+\frac{J_{50}[q^{15};q^{50}]^2_\infty}{[q^{5},q^{10},q^{20};q^{50}]_\infty}+q^6\frac{J_{50}[q^{5};q^{50}]_\infty}{[q^{10},q^{20};q^{50}]_\infty}+q^2\frac{J_{50}[q^{25};q^{50}]_\infty}{[q^{10},q^{20};q^{50}]_\infty}	
		\no\\&\quad 	-q^3\frac{J_{50}[q^{15};q^{50}]_\infty}{[q^{10},q^{20};q^{50}]_\infty}-q^9\frac{J_{50}[q^{5};q^{50}]^2_\infty}{[q^{10},q^{15},q^{20};q^{50}]_\infty}-\frac{q^{14}J_{150}}{[q^{45},q^{50};q^{150}]_\infty}.
		\label{co12}
		\end{align}
\end{corollary}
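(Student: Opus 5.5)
The corollary should follow from Theorem \ref{thm3} by extracting the coefficients of powers of $\zeta_5$. First I will express $R^0(\zeta_5;q)$ in terms of the residue-class counts. Writing
\[
A_j(q):=\sum_{n\ge1}N^0(j,5;n)q^n ,
\]
we have $R^0(\zeta_5;q)=\sum_{j=0}^4 A_j\zeta_5^j$. The symmetry $N^0(m,n)=N^0(-m,n)$ is immediate: swapping the two rows of an odd Durfee symbol negates its odd rank. It gives $A_1=A_4$ and $A_2=A_3$, so
\[
R^0(\zeta_5;q)=A_0+(\zeta_5+\zeta_5^4)A_1+(\zeta_5^2+\zeta_5^3)A_2 .
\]
Using $1+\zeta_5+\zeta_5^2+\zeta_5^3+\zeta_5^4=0$, this becomes
\[
R^0(\zeta_5;q)=(A_0-A_2)+(\zeta_5+\zeta_5^4)(A_1-A_2).
\]

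Next I rewrite the right-hand side of \eqref{r5} in the same basis. I replace $\zeta_5^2+\zeta_5^3$ by $-1-(\zeta_5+\zeta_5^4)$. The functions $S(r,5;\tau)$ and the theta quotients all have coefficients in $\mathbb{Q}$; in fact the prefactor $i^{1-c}$ equals $1$ for $c=5$. So the right side takes the form $X(q)+(\zeta_5+\zeta_5^4)Y(q)$ with $X,Y\in\mathbb{Z}[[q]]$, up to a finite Laurent part.

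Because $1$ and $\zeta_5+\zeta_5^4$ are linearly independent over $\mathbb{Q}$, we can compare coefficients to get $A_0-A_2=X$ and $A_1-A_2=Y$. Then
\[
\sum_{n\ge1}\bigl(N^0(0,5;n)-N^0(1,5;n)\bigr)q^n=X-Y,
\qquad
\sum_{n\ge1}\bigl(N^0(1,5;n)-N^0(2,5;n)\bigr)q^n=Y.
\]

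The remaining work is bookkeeping. Let $B$ be the big bracket multiplying $(\zeta_5^2+\zeta_5^3)$ in \eqref{r5}, and let $T$ be the group of seven theta quotients standing without a $\zeta_5$ factor. Then
\[
Y=S(0,5)-S(1,5)-\bigl(S(2,5)+S(4,5)\bigr)-B,
\]
which matches \eqref{co12} term by term, since $-B$ flips every sign inside the bracket. Also
\[
X=-2S(3,5)+T-S(2,5)-S(4,5)-B,
\]
so
\[
X-Y=-2S(3,5)+T-S(0,5)+S(1,5),
\]
which is exactly \eqref{co01}.

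The only potential obstacle is justifying the coefficient comparison. Each coefficient of $q^n$ on both sides lies in $\mathbb{Q}(\zeta_5)$, so we need $\{1,\zeta_5+\zeta_5^4\}$ to be $\mathbb{Q}$-independent. This holds because $\zeta_5+\zeta_5^4=(\sqrt5-1)/2$ is irrational. The other point to check is that no $S$-term secretly carries non-rational coefficients, which is clear for $c=5$ from its definition.
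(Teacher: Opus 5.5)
Your proposal is correct and follows the paper's argument. Both use the symmetry $N^0(m,n)=N^0(-m,n)$ and $\sum_{j=0}^4\zeta_5^j=0$ to write $R^0(\zeta_5;q)$ over a two-element $\mathbb{Q}$-basis, substitute into \eqref{r5}, and compare coefficients. The only difference is which element you eliminate: the paper eliminates $\zeta_5+\zeta_5^4$, keeping $1,\zeta_5^2+\zeta_5^3$, which yields \eqref{co01} and \eqref{co12} directly. You eliminate $\zeta_5^2+\zeta_5^3$ and recover \eqref{co01} as $X-Y$; this is cosmetic, and your bookkeeping checks out.
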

Analogs of \eqref{1id5} follows from Corollary \ref{coro} immediately. For example, extracting only terms with $q^n$ where $n\equiv 1, 2, 3\pmod{5}$ in \eqref{co12} yields
 \begin{align}
	\sum_{n=0}^{\infty}(N^0(1,5,5n+1)-N^0(2,5,5n+1))q^n&=q\frac{J_{10}[q;q^{10}]_\infty}{[q^{2},q^{4};q^{10}]_\infty},\no\\
		\sum_{n=0}^{\infty}(N^0(1,5,5n+2)-N^0(2,5,5n+2))q^n&=\frac{J_{10}[q^5;q^{10}]_\infty}{[q^{2},q^{4};q^{10}]_\infty},\no
		\intertext{and}
			\sum_{n=0}^{\infty}(N^0(1,5,5n+3)-N^0(2,5,5n+3))q^n&=-\frac{J_{10}[q^3;q^{10}]_\infty}{[q^{2},q^{4};q^{10}]_\infty}.\no
\end{align}

This article is organized as follows. In Section \ref{s2}, we first recall some results on the Dedekind's eta function and Zwegers' works on Appell–Lerch sums. Then we study the modular completion of the odd rank generating 
using Zwegers' results. After establishing transformations formulas of the concerned functions, we prove Theorems \ref{thm1} and \ref{thm2} in Section \ref{s3}.
In Section \ref{s4}, we apply the valence formula to show Theorem \ref{thm3}. In this proof, we also need Frey and Garvan’s
MAPLE package \cite{maple} to study the cusp behaviors of some modular functions. Lastly, using Theorem \ref{thm3}, we obtain Corollary \ref{coro}.

\section{Preliminaries}\label{s2}
\subsection{Dedekind’s eta-function} 
As a building block of many classical modular forms, the Dedekind's eta-function plays an important role in the theory of modular forms.
The transformation formula of $\eta(\tau)$ under $\mathrm{SL}_2(\mathbb{Z})$ is given by
	\begin{align}
	\eta(B \tau)=\nu(B) \sqrt{\gamma \tau+\delta} \eta(\tau),   \label{treta}
\end{align}
where $B=\left(\begin{array}{cc}\alpha & \beta \\ \gamma & \delta\end{array}\right) \in \mathrm{SL}_2(\mathbb{Z})$ and
	$$
\nu(B)= \begin{cases}\left(\frac{\delta}{|\gamma|}\right) \exp \left(\frac{\pi i}{12}\left((\alpha+\delta) \gamma-\beta \delta\left(\gamma^2-1\right)-3 \gamma\right)\right) & \text { if } \gamma \equiv 1 \pmod{2}, \\ \left(\frac{\gamma}{\delta}\right) \exp \left(\frac{\pi i}{12}\left((\alpha+\delta) \gamma-\beta \delta\left(\gamma^2-1\right)+3 \delta-3-3 \gamma \delta\right)\right) & \text { if } \delta \equiv 1 \pmod{2}.\end{cases}
$$	

%
 Moreover, we also have the following lemma. Define ${ }^m B:=\left(\begin{array}{cc}\alpha & m \beta \\ \gamma / m & \delta\end{array}\right)$ for $B=\left(\begin{array}{ll}
	\alpha & \beta \\
	\gamma &\delta
\end{array}\right) \in \Gamma_0(m)$.
 \begin{lemma}
 	We have
 		\begin{align}
 		\frac{\nu^2\left(B\right)}{\nu^4\left({}^{2}B\right)}= (-1)^{\frac{\alpha-1}{2}(1+\beta)} e^{-\frac{ \pi i\beta}{2}}\label{etaj}
 	\end{align}
 	with $B=\left(\begin{array}{cc}\alpha & \beta \\ \gamma & \delta\end{array}\right) \in\Gamma_0(2).$
%
%
\end{lemma}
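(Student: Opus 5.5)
The plan is a direct computation from the explicit formula for $\nu$ in \eqref{treta}. Let $B=\left(\begin{smallmatrix}\alpha&\beta\\ \gamma&\delta\end{smallmatrix}\right)\in\Gamma_0(2)$. Then $\gamma$ is even, and $\alpha\delta-\beta\gamma=1$ forces $\delta$ (and $\alpha$) to be odd. The matrix ${}^2B=\left(\begin{smallmatrix}\alpha&2\beta\\ \gamma/2&\delta\end{smallmatrix}\right)$ has the same odd lower-right entry $\delta$. So for both $\nu(B)$ and $\nu({}^2B)$ I will use the second branch of the formula (the case $\delta\equiv1\pmod 2$). The Jacobi-symbol factors then appear as $\left(\frac{\gamma}{\delta}\right)^2$ and $\left(\frac{\gamma/2}{\delta}\right)^4$. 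Since $\gcd(\gamma,\delta)=1$, these symbols are $\pm1$, so both powers equal $1$; this includes the degenerate case $\gamma=0$, $\delta=\pm1$. Hence only the exponentials remain.

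Next I compare exponents. Up to the common factor $\pi i/6$, the exponent of $\nu^2(B)$ is
\[
(\alpha+\delta)\gamma-\beta\delta\gamma^2+\beta\delta+3\delta-3-3\gamma\delta .
\]
For $\nu^4({}^2B)$ the exponent is $\pi i/3$ times $(\alpha+\delta)\gamma/2-2\beta\delta(\gamma^2/4-1)+3\delta-3-3\gamma\delta/2$. Rewritten with the factor $\pi i/6$, this is
\[
(\alpha+\delta)\gamma-\beta\delta\gamma^2+4\beta\delta+6\delta-6-3\gamma\delta .
\]
Subtracting, the terms in $\gamma$ cancel, and
\[
\frac{\nu^2(B)}{\nu^4({}^2B)}=\exp\Big(-\frac{\pi i}{2}(\beta\delta+\delta-1)\Big)
= e^{-\frac{\pi i\beta}{2}}\,(-1)^{\frac{(\delta-1)(\beta+1)}{2}},
\]
using $\beta\delta+\delta-1=\beta+(\delta-1)(\beta+1)$.

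It remains to replace $\delta$ by $\alpha$ in the sign, and I split into two cases according to the parity of $\beta$.
\begin{enumerate}[(1)]
\item If $\beta$ is odd, then $\beta+1$ and $\delta-1$ are both even. So $\frac{(\delta-1)(\beta+1)}{2}$ is even, and likewise $\frac{(\alpha-1)(\beta+1)}{2}$ is even. Both signs equal $1$.
\item If $\beta$ is even, then $\beta\gamma\equiv0\pmod 4$ because $\gamma$ is even. Hence $\alpha\delta=1+\beta\gamma\equiv1\pmod4$, which gives $\alpha\equiv\delta\pmod 4$. Therefore $(-1)^{(\delta-1)/2}=(-1)^{(\alpha-1)/2}$, and the two signs agree.
\end{enumerate}
This yields \eqref{etaj}.

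The only delicate point is the bookkeeping of the exponents, in particular expanding the $(\gamma/2)^2-1$ term of ${}^2B$ correctly. The rest is routine.
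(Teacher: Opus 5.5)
Your proof is correct, but it takes a different route from the paper's.

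The paper never expands the explicit formula for $\nu$ here. It writes $\eta(2B\tau)/\eta^2(B\tau)$ in two ways:
\begin{enumerate}[(1)]
\item via \eqref{treta} applied to $B$ and ${}^2B$ (using $2B\tau={}^2B(2\tau)$), which gives the multiplier $\nu({}^2B)/\nu^2(B)$;
\item via the transformation law of the theta quotient $\eta(2\tau)/\eta^2(\tau)$ on $\Gamma_0(2)$ imported from \cite[Proposition 2.1]{cs}, which gives $(-1)^{\frac{\alpha-1}{2}+\beta}i^{-\alpha\beta}\nu^{-3}({}^2B)$.
\end{enumerate}
It then equates the two and rewrites $i^{\alpha\beta}=i^{(\alpha-1)\beta}i^{\beta}$ to reach \eqref{etaj}.

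You instead compute directly from the $\delta$-odd branch of \eqref{treta}, in three steps.
\begin{enumerate}[(1)]
\item The Jacobi symbols disappear because they occur to even powers and are $\pm1$, so sign conventions for negative or zero arguments are irrelevant.
\item The $\gamma$-dependent parts of the two exponents cancel, leaving $-\frac{\pi i}{2}(\beta\delta+\delta-1)$. I checked this bookkeeping, including the $(\gamma/2)^2-1$ term.
\item A mod $4$ argument based on $\alpha\delta=1+\beta\gamma$ converts $(-1)^{\frac{(\delta-1)(\beta+1)}{2}}$ into $(-1)^{\frac{(\alpha-1)(\beta+1)}{2}}$.
\end{enumerate}

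Using the $\delta$-odd branch for ${}^2B$ even when $\gamma/2$ is odd is legitimate, since that is exactly how the paper states \eqref{treta}.

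What each route buys: yours is self-contained within the paper and shows transparently why no quadratic-residue information survives in the ratio. The paper's is shorter, but it rests on an external theta-function transformation whose normalization has to be matched against \eqref{treta}.
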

\begin{proof}
%
	For $B=\left(\begin{array}{ll}
		\alpha & \beta \\
		\gamma &\delta
	\end{array}\right) \in \Gamma_0(2)$,
we have
	\begin{align}
		\frac{\eta\left(2B\tau\right)}{\eta^2(B\tau)}\no&= \frac{1}{\sqrt{\gamma \tau+\delta}} \frac{\nu\left({}^{2}B\right)\eta\left(2\tau\right)}{\nu^2\left(B\right)\eta^2(\tau)}
		\\&= (-1)^{\frac{\alpha-1}{2}+\beta}\no i^{-\alpha \beta}\nu^{-3}\left({}^2B\right)\frac{1}{\sqrt{\gamma \tau+\delta}} \frac{\eta\left(2\tau\right)}{\eta^2(\tau)},\no
	\end{align}
where the second equality follows from \cite[Proposition 2.1]{cs}.
Thus
	\begin{align}
\frac{\nu^2\left(B\right)}{\nu^4\left({}^{2}B\right)}&= (-1)^{\frac{\alpha-1}{2}+\beta}\no i^{\alpha \beta}
\\&= (-1)^{\frac{\alpha-1}{2}+\beta}\no i^{(\alpha-1+1) \beta}
\end{align}
which gives \eqref{etaj}.
\end{proof}

\subsection{Modularity of Appell-Lerch sums} We recall Zwegers' works \cite{mock,zew1} on the modular completion of the Appell-Lerch sums.
Let
	\begin{align}
	\vartheta(z ; \tau):&=\sum_{\nu \in \frac{1}{2}+\mathbf{Z}} e^{\pi i \nu^{2} \tau+2 \pi i \nu\left(z+\frac{1}{2}\right)}\nonumber
	\\&=-i q^{\frac{1}{8}} e^{-\pi i z} \prod_{n=1}^{\infty}\left(1-q^{n}\right)\left(1-e^{2 \pi i z} q^{n-1}\right)\left(1-e^{-2 \pi i z} q^{n}\right).\label{theta}\\
	\mu(u, v ; \tau)&:=\frac{e^{\pi i  u }}{	\vartheta(v;\tau)} \sum_{n \in \mathbb{Z}} \frac{(-1)^{ n } e^{\pi i n (n+1)\tau+2\pi i n v } }{1- e^{2\pi i n\tau+2\pi iu}},\label{defapp}\\
	A_{\ell}(u, v ; \tau)&:=e^{\pi i \ell u } \sum_{n \in \mathbb{Z}} \frac{(-1)^{\ell n } e^{\pi i\ell n (n+1)\tau+2\pi i n v } }{1- e^{2\pi i n\tau+2\pi iu}}.\label{defapp}
\end{align}
The $\mu$-functions and $A_\ell$-functions can be completed to non-holomorphic modular forms.
To describe the completion, we need the nonholomorphic function $R(u;\tau)$ which is defined by
$$
\begin{aligned}
	R(u ; \tau):= & \sum_{\nu \in \frac{1}{2}+\mathbb{Z}}\{\operatorname{sgn}(v)-E((v+\operatorname{Im}(u) / \operatorname{Im}(\tau)) \sqrt{2 \operatorname{Im}(\tau)})\} \\
	& \times(-1)^{\nu-\frac{1}{2}} q^{-v^2 / 2} e^{-2 \pi i v u}
\end{aligned}
$$
with
$$
\begin{aligned}
	& E(z):=2 \int_0^z e^{-\pi u^2} \mathrm{~d} u=\operatorname{sgn}(z)\left(1-\beta\left(z^2\right)\right) \quad(z \in \mathbb{R}) \\
	& \beta(x):=\int_x^{\infty} u^{-\frac{1}{2}} e^{-\pi u} \mathrm{~d} u \quad\left(x \in \mathbb{R}_{\geq 0}\right).
\end{aligned}
$$
We remark that, by \cite[Eq. (2.17)]{cjs1}, we have for \( a \in \mathbb{Q} \),
\begin{align}
R(a\tau - b;\tau) = p(\tau) + O(y^{-\frac{1}{2}}e^{-\pi a^2 y - \epsilon y}),\label{gth}
\end{align}
as \( y \to \infty \), where \( \epsilon > 0 \) and \( p(\tau) \) is some rational function of a fractional power of \( q \).

Define
\begin{align}
	\widehat{\mu}(u, v ; \tau):= & \mu(u, v ; \tau)+\frac{i}{2}R(u-v;\tau)\label{cnu},\\
	\widehat{A}_l(u, v ; \tau):= & A_l(u, v ; \tau) \no\\
	& +\frac{i}{2} \sum_{k=0}^{l-1} e^{2 \pi i k u} \vartheta(v+k \tau+(l-1) / 2 ; l \tau) R(l u-v-k \tau-(l-1) / 2 ; l \tau).
\end{align}
	Then Zwegers \cite{mock,zew1} proved that
	\begin{align}
		&	\widehat{A}_{\ell}\left(u+n_{1} \tau+m_{1}, v+n_{2} \tau+m_{2};\tau\right)\nonumber\\ &=(-1)^{\ell\left(n_{1}+m_{1}\right)} e^{2\pi iu(\ell n_{1}-n_{2})} e^{-2\pi ivn_{1}} q^{\ell n_{1}^{2} / 2-n_{1} n_{2}} \widehat{A}_{\ell}(u, v;\tau),\label{appel}\\
	&	\widehat{\mu}(u + n_1\tau + m_1, v + n_2\tau + m_2;\tau) \no\\&= (-1)^{n_1+m_1+n_2+m_2} e^{\pi i\tau(n_1-n_2)^2 + 2\pi i (n_1-n_2)(u-v)} \widehat{\mu}(u,v;\tau),
		\intertext{for all $n_1, n_2, m_1, m_2\in\mathbb{Z}$ and}
		&	\widehat{A}_{\ell}\left(\frac{u}{\gamma \tau+\delta}, \frac{v}{\gamma \tau+\delta} ; \frac{\alpha \tau+\beta}{\gamma \tau+\beta}\right)\nonumber\\&=(\gamma \tau+\delta) e^{\pi i \gamma\left(-\ell u^{2}+2 u v\right) /(\gamma \tau+\delta)} \widehat{A}_{\ell}(u, v ; \tau),\label{appmo}\\
	&	\widehat{\mu}\left(\frac{u}{\gamma \tau+\delta}, \frac{v}{\gamma \tau+\delta} ; \frac{\alpha \tau+\beta}{\gamma \tau+\beta}\right)\no\\&= \nu^{-3}(B) \sqrt{\gamma \tau+\beta} \, e^{-\pi i\gamma(u-v)^2 / (\gamma \tau+\beta)} \widehat{\mu}(u,v; \tau)\label{trmu}
		\intertext{for $B=\left(\begin{array}{ll}
				\alpha & \beta \\
				\gamma &\delta
			\end{array}\right) \in \mathrm{SL}_{2}(\mathbb{Z})$. }\nonumber
	\end{align}

In the next proposition, we rewrite $	R^0(\zeta ; q)$ in terms of completed Appell-Lerch sums. 
\begin{proposition}\label{pro828}
	We have
	\begin{align}
		&	\widehat{\mathscr{R}^0}(a,c ; \tau)\no\\&=\frac{\zeta_{2c}^{-3a}q^{\frac{-3}{4}}}{\eta(2\tau)}\widehat{A_3}\left(\tau+\frac{a}{c},0;2\tau\right)\label{ra3}\\
		&=-i q^{-\frac{1}{12}}\zeta_{2c}^a\widehat{	\mu}\left(3\tau-\frac{a}{c}, \frac{2a}{c}+2\tau ; 6\tau\right)
		-i q^{-\frac{1}{12}}\zeta_{2c}^{-a}\widehat{	\mu}\left(\tau-\frac{a}{c}, \frac{2a}{c}+2\tau ; 6\tau\right).\label{ru1}
	\end{align}
\end{proposition}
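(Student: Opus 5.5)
The plan is to prove \eqref{ra3} first. I would split $\widehat{\mathscr{R}^0}$ and $\widehat{A_3}$ into holomorphic and non-holomorphic parts and match each separately. Then \eqref{ru1} would follow from \eqref{ra3} by a decomposition of $\widehat{A_3}$ into $\widehat{\mu}$-functions.

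\emph{Holomorphic parts.} I would start from Andrews' Appell--Lerch representation of the odd rank generating function. This comes from \cite[Eq. (8.3)]{an} by the standard Watson--Bailey / partial fraction argument. Up to the exact normalisation, it should read
\[
R^0(z;q)=\frac{q}{(q^2;q^2)_\infty}\sum_{n\in\mathbb{Z}}\frac{(-1)^n q^{3n^2+3n}}{1-zq^{2n+1}}.
\]
Next I would take $A_3(u,0;2\tau)$ with $e^{2\pi i u}=zq$, i.e. $u=\tau+\frac{a}{c}$ and $z=\zeta_c^a$. In this case
\[
A_3(u,0;2\tau)=\zeta_{2c}^{3a}q^{3/2}\sum_n \frac{(-1)^{n}q^{3n(n+1)}}{1-\zeta_c^aq^{2n+1}}.
\]
Multiplying by $\zeta_{2c}^{-3a}q^{-3/4}/\eta(2\tau)$ with $\eta(2\tau)=q^{1/12}(q^2;q^2)_\infty$ should reproduce $q^{-1/3}R^0(\zeta_c^a;q)=\mathscr{R}^0(a,c;\tau)$. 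The check is that the powers of $q$ match: $3/2-3/4-1/12=2/3=1-1/3$. If Andrews' identity comes in a symmetrised form with both $1-zq^{2n+1}$ and $1-z^{-1}q^{2n+1}$, I would first fold it into one sum by $n\mapsto -n-1$.

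\emph{Non-holomorphic parts.} The correction term of $\widehat{A_3}(u,0;2\tau)$ is a sum over $k=0,1,2$ of terms
\[
\tfrac{i}{2}e^{2\pi i k u}\,\vartheta(2k\tau+1;6\tau)\,R(3u-2k\tau-1;6\tau).
\]
The $k=0$ term vanishes because $\vartheta(1;6\tau)=0$, since $\vartheta$ vanishes on the lattice $\mathbb{Z}+6\tau\mathbb{Z}$.

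For $k=1,2$, the product formula \eqref{theta} gives $\vartheta(2k\tau+1;6\tau)$ as $(q^2;q^2)_\infty$ times an explicit power of $q$ and a root of unity. The reason is that $(q^6;q^6)_\infty(q^{2};q^6)_\infty(q^{4};q^6)_\infty=(q^2;q^2)_\infty$. This factor therefore cancels the $\eta(2\tau)$ in the denominator, leaving only $q$-powers.

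I would then write $3u-2k\tau-1=(3-2k)\tau+\frac{3a}{c}-1$ on the lattice $6\tau$, i.e. in the form $\alpha(6\tau)-\beta$ with $\alpha=\frac{3-2k}{6}\in\{\frac16,-\frac16\}$. I would apply Zwegers' integral representation
\[
R(\alpha\tau-\beta;\tau)=-\int_{-\bar\tau}^{i\infty}\frac{g_{\alpha+\frac12,\beta+\frac12}(z)}{\sqrt{-i(z+\tau)}}\,dz,\qquad |\alpha|<\tfrac12,
\]
which is valid here since $|\alpha|<\frac12$. This produces $g_{2/3,\cdot}$ and $g_{1/3,\cdot}$ evaluated at $6z$, after substituting $z\mapsto 6z$. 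The factor $\sqrt6$ comes from $\sqrt{-i(6z+6\tau)}$ and the change of variables.

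The characteristic $\beta+\frac12$ has to be brought to $\frac12-\frac{3a}{c}$ modulo $1$. For this I would use $g_{\alpha,\beta+1}=e^{2\pi i\alpha}g_{\alpha,\beta}$ and $g_{-\alpha,-\beta}=-g_{\alpha,\beta}$. These shifts generate the roots of unity $e^{\mp\pi i/6}$, which should combine with $e^{2\pi i k u}$ and the $q$-powers to give exactly the constants in \eqref{defhr}.

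\emph{Second equality.} For \eqref{ru1}, I would split the summation index of $A_3(u,0;2\tau)$ by residue modulo $3$, or equivalently use Zwegers' decomposition of $A_\ell$ into $\ell$ $\mu$-functions. This expresses $\frac{1}{\eta(2\tau)}A_3(\tau+\frac ac,0;2\tau)$ as a combination of $\mu(\cdot,\cdot;6\tau)$. Theta quotients $\vartheta(\cdot;6\tau)/\eta(2\tau)$ again reduce to monomials, and one of the three pieces should drop out or pair with another via $\widehat\mu(u,v)=\widehat\mu(-u,-v)$. I would verify the identity at the level of completed functions by checking that the $R$-terms agree, using \eqref{cnu} and the elliptic transformation of $\widehat\mu$.

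I expect the main obstacle to be the bookkeeping of characteristics and roots of unity. Reducing $R(\pm\tau+\frac{3a}{c}-1;6\tau)$ to Zwegers' range and rewriting $g_{\alpha,\beta}$ with $\beta=\frac12-\frac{3a}{c}$ introduces phases that must produce exactly $e^{\pm\pi i/6}$ and the prefactor $\frac{\sqrt6}{2}$. I would first check these constants numerically for a small case such as $a/c=1/5$.
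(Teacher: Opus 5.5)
The first equality \eqref{ra3} follows the paper's argument, but the second equality \eqref{ru1} has a genuine gap.

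\textbf{On \eqref{ra3}.} You use the same steps as the paper: Andrews' Lerch-sum form of $R^0$, identification with $A_3(\tau+\frac{a}{c},0;2\tau)$, the $k=0$ correction killed by $\vartheta(1;6\tau)=0$, the monomials $\vartheta(2k\tau+1;6\tau)/\eta(2\tau)$, and Zwegers' Theorem~1.16. One slip: Zwegers' formula carries a prefactor,
$R(\alpha\tau-\beta;\tau)=-e^{\pi i\alpha^2\tau-2\pi i\alpha(\beta+\frac12)}\int_{-\overline{\tau}}^{i\infty}\frac{g_{\alpha+\frac12,\beta+\frac12}(z)}{\sqrt{-i(z+\tau)}}\,dz$.
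For $R(\pm\tau+\frac{3a}{c};6\tau)$ this prefactor equals $q^{\frac{1}{12}}\zeta_{2c}^{\pm a}e^{\mp\frac{\pi i}{6}}$. It is exactly what cancels the leftover $q^{-\frac{1}{12}}$ and produces the constants in \eqref{defhr}.

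\textbf{The gap in \eqref{ru1}.} Once \eqref{ra3} is known, the real content of \eqref{ru1} is the holomorphic identity
\[
q^{-\frac{1}{3}}R^0(\zeta_c^a;q)=-iq^{-\frac{1}{12}}\zeta_{2c}^{a}\,\mu\Bigl(3\tau-\frac{a}{c},\frac{2a}{c}+2\tau;6\tau\Bigr)-iq^{-\frac{1}{12}}\zeta_{2c}^{-a}\,\mu\Bigl(\tau-\frac{a}{c},\frac{2a}{c}+2\tau;6\tau\Bigr).
\]
Checking that the $R$-terms agree, which is all your verification step does, only shows that the two sides of \eqref{ru1} differ by a holomorphic function.

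Your decomposition does not produce this identity. Use the partial fraction $\frac{1}{1-xq^{2n}}=\frac{1+xq^{2n}+x^2q^{4n}}{1-x^3q^{6n}}$ with $u=\tau+\frac ac$ and $x=e^{2\pi iu}$; this is Zwegers' splitting of $A_3$, whereas splitting $n$ modulo $3$ would give level-$9$ sums, not $\mu$'s. It yields the three terms $e^{2\pi iku}\vartheta(2k\tau+1;6\tau)\mu\bigl(3\tau+\frac{3a}{c},2k\tau+1;6\tau\bigr)$ for $k=0,1,2$.
\begin{itemize}
\item The $k=0$ term does not drop out. Only its $R$-correction vanishes; its holomorphic part is $e^{3\pi iu}\sum_n\frac{(-1)^nq^{3n^2+3n}}{1-x^3q^{6n}}=e^{3\pi iu}\frac{(q^6;q^6)_\infty^2}{(x^3,q^6x^{-3};q^6)_\infty}\neq 0$.
\item The $k=1,2$ terms have the right $u-v$ up to sign and lattice shifts, but not the pairs $(u,v)$ appearing in \eqref{ru1}. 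Neither $\widehat\mu(-u,-v)=\widehat\mu(u,v)$ nor the elliptic transformation can reach those pairs.
\item Reaching them needs a simultaneous shift by some $z\notin 6\tau\mathbb{Z}+\mathbb{Z}$. For example, for $k=1$ you go from $(-3\tau-\frac{3a}{c},-2\tau-1)$ with $z=4\tau+\frac{2a}{c}$. But $\widehat\mu(u+z,v+z;6\tau)-\widehat\mu(u,v;6\tau)=\frac{i\eta^3(6\tau)\vartheta(u+v+z)\vartheta(z)}{\vartheta(u)\vartheta(v)\vartheta(u+z)\vartheta(v+z)}$ (all $\vartheta$'s at $6\tau$), which is again a nonzero theta quotient.
\end{itemize}
So your route reaches \eqref{ru1} only after a three-term theta-quotient identity, which you neither state nor prove.

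The paper avoids this step. It cites Hickerson--Mortenson's identity $g(x;q)=-x^{-1}m(q^2x^{-3},q^3,x^2)-x^{-2}m(qx^{-3},q^3,x^2)$ for the universal mock theta function, together with Wang's $R^0(z;q)=qg(zq;q^2)$. These give the displayed holomorphic identity directly. Only then does it match the $R$-terms with \eqref{5132039}, using $R(-u;\tau)=R(u;\tau)$.
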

\begin{proof}
	Recall \cite[Eq.(8.5)]{an}:
	\begin{align*}
		R^0(\zeta ; q)&=\frac{1}{\left(q^2 ; q^2\right)_{\infty}} \sum_{n=-\infty}^{\infty} \frac{(-1)^n q^{3 n^2+3 n+1}}{1-\zeta q^{2 n+1}}.
	\end{align*}
	Then \begin{align*}
		R^0(\zeta ; q)&=\frac{1}{q^{\frac{5}{12}}\zeta^{\frac{3}{2}}\eta(2\tau)} A_3(\tau+z,0;2\tau),
	\end{align*}
	which together with \eqref{defr0} gives
	\begin{align*}
		\mathscr{R}^0(a,c; \tau)&=\frac{\zeta_{2c}^{-3a}q^{\frac{-3}{4}}}{\eta(2\tau)}A_3\left(\tau+\frac{a}{c},0;2\tau\right).
	\end{align*}
	We also	note that
	\begin{align}
		\widehat{A_3}\left(\tau+\frac{a}{c},0;2\tau\right)&=	A_3\left(\tau+\frac{a}{c},0;2\tau\right)	\no	\\&\quad+\frac{i}{2}\sum_{k=0}^2e^{2k\pi i\left(\tau+\frac{a}{c}\right)}\vartheta(2k\tau+1;6\tau)
		R\left(3\left(\tau+\frac{a}{c}\right)-2k\tau-1;6\tau\right)
		\no\\&=A_3\left(\tau+\frac{a}{c},0;2\tau\right)+\frac{i}{2}e^{2\pi i\left(\tau+\frac{a}{c}\right)}\vartheta(2\tau;6\tau)
		R\left(\tau+\frac{3a}{c};6\tau\right)
		\no	\\&\quad	+\frac{i}{2}e^{2\pi i\left(2\tau+\frac{2a}{c}\right)}\vartheta(4\tau;6\tau)
		R\left(\frac{3a}{c}-\tau;6\tau\right).\no
	\end{align}
	This together with \eqref{theta} implies
	\begin{align}
		&\frac{\zeta_{2c}^{-3a}q^{\frac{-3}{4}}}{\eta(2\tau)}\widehat{A_3}\left(\tau+\frac{a}{c},0;2\tau\right)
		\no\\&=	\mathscr{R}^0(a,c; \tau)+\frac{q^{-\frac{1}{12}}}{2}\left(\zeta_{2c}^{-a}	R\left(\tau+\frac{3a}{c};6\tau\right)+\zeta_{2c}^{a}	R\left(\frac{3a}{c}-\tau;6\tau\right)\right)\label{5132039}.
	\end{align}
	Apply \cite[Theorem 1.16]{mock} to obtain
	\begin{align}
		R\left(\pm\tau+\frac{3a}{c};6\tau\right)\no&=	R\left(\pm\frac{1}{6}(6\tau)-\frac{-3a}{c};6\tau\right)
		\no\\&=-e^{\pi i\left(\frac{\tau}{6}\pm\left(\frac{a}{c}-\frac{1}{6}\right)\right)}\int_{-\overline{6\tau}}^{i \infty} \frac{g_{\pm\frac{1}{6}+\frac{1}{2}, \frac{1}{2}-\frac{3a}{c}}(z)}{\sqrt{-i(z+6\tau)}} d z
		\no\\&=-\sqrt{6}q^{\frac{1}{12}}\zeta_{2c}^{\pm a}e^{\mp\frac{\pi i}{6}}\int_{-\overline{\tau}}^{i \infty} \frac{g_{\pm\frac{1}{6}+\frac{1}{2}, \frac{1}{2}-\frac{3a}{c}}(6z)}{\sqrt{-i(z+\tau)}} d z.\no
	\end{align}
	It follows 
	\begin{align}
		&\frac{\zeta_{2c}^{-3a}q^{\frac{-3}{4}}}{\eta(2\tau)}\widehat{A_3}\left(\tau+\frac{a}{c},0;2\tau\right)
		\no\\&=		\mathscr{R}^0(a,c; \tau)
		-\frac{\sqrt{6}}{2}\times\left(e^{-\frac{\pi i}{6}}\int_{-\overline{\tau}}^{i \infty} \frac{g_{\frac{2}{3}, \frac{1}{2}-\frac{3a}{c}}(6z)}{\sqrt{-i(z+\tau)}} d z+e^{\frac{\pi i}{6}}\int_{-\overline{\tau}}^{i \infty} \frac{g_{\frac{1}{3}, \frac{1}{2}-\frac{3a}{c}}(6z)}{\sqrt{-i(z+\tau)}} d z\right)
		\no\\&=	\widehat{\mathscr{R}^0}(a,c ; \tau).\no
	\end{align}
	This proves \eqref{ra3}.

	Next, we consider \eqref{ru1}.
	Define 
	\begin{align*}
		g(x;q)& := \sum_{n=0}^\infty \frac{q^{n(n+1)}}{\left( x;q \right)_{n+1} \left( q/x;q \right)_{n+1}},
		\intertext{and}
		m(x,q,z) &:= \frac{-z}{(z,q/z,q;q)} \sum_{r=-\infty}^\infty \frac{(-1)^{r} q^{\frac{r(r+1)}{2}} z^{r}}{1 - q^{r} x z}.
	\end{align*}
	Then \cite[Eq. (4.3)]{hekce} gives
	\begin{align}
		g(x;q) = -x^{-1} m(q^2 x^{-3},q^3, x^2) -x^{-2} m(qx^{-3},q^3 , x^2).\no
	\end{align}
	By
	\cite[Eq. (2.4)]{wang}, we have
	\begin{align}
		R^0(z;q)=qg(zq;q^2),\no
	\end{align}
	It follows 
	\begin{align}
		R^0(z;q)&=-z^{-1}m(q/z^3,q^6,z^2q^2)-z^{-2}q^{-1}m(1/(qz^3),q^6,z^2q^2)\no
		\\	&=-iq^{1/4}z^{1/2}\mu(3\tau-u, 2u+2\tau ; 6\tau)-iq^{1/4}z^{-1/2}\mu(\tau-u, 2u+2\tau ; 6\tau)\no
	\end{align}
	with $z=e^{2\pi i u}$.
	Thus
	\begin{align}
		\widehat{\mathscr{R}^0}(a,c ; \tau)&=q^{-\frac{1}{3}}	R^0(\zeta_c^a ; q)+\frac{q^{-\frac{1}{12}}}{2}\left(\zeta_{2c}^{-a}	R\left(\tau+\frac{3a}{c};6\tau\right)+\zeta_{2c}^{a}	R\left(\frac{3a}{c}-\tau;6\tau\right)\right)\no\\
		&=-i q^{-\frac{1}{12}}\zeta_{2c}^a\left[	\mu\left(3\tau-\frac{a}{c}, \frac{2a}{c}+2\tau ; 6\tau\right)+\frac{i}{2}R\left(\frac{3a}{c}-\tau;6\tau\right)\right]\no
		\\&\quad-i q^{-\frac{1}{12}}\zeta_{2c}^{-a}\left[	\mu\left(\tau-\frac{a}{c}, \frac{2a}{c}+2\tau ; 6\tau\right)+\frac{i}{2}	R\left(\tau+\frac{3a}{c};6\tau\right)\right]
		\no\\
		&=-i q^{-\frac{1}{12}}\zeta_{2c}^a\widehat{	\mu}\left(3\tau-\frac{a}{c}, \frac{2a}{c}+2\tau ; 6\tau\right)\no
		-i q^{-\frac{1}{12}}\zeta_{2c}^{-a}\widehat{	\mu}\left(\tau-\frac{a}{c}, \frac{2a}{c}+2\tau ; 6\tau\right),\no
	\end{align}
	where the last equality follows from \eqref{cnu} and 
	\begin{align}
		R(-u;\tau)=R(u;\tau). \qquad\qquad\textrm{(see Proposition 1.9 of \cite{mock})}
	\end{align}
	This proves \eqref{ru1}.
	%
\end{proof}
Applying Proposition \ref{pro828} to obtain
\begin{proposition}\label{nohr}
	The non-holomorphic part of $\widehat{\mathscr{R}^0}(a,c ; \tau)$ equals to
	\begin{align}
		&	\frac{1 }{ 2\sqrt{\pi}} \sum_{n=-\infty}^{\infty}(-1)^n \left(\zeta_c^{-(3  n+2)a}+\zeta_c^{(3  n+2)a} \right)q^{-3 n^2-4 n-\frac{4}{3}} \operatorname{sgn}\left(n+\frac{2}{3}\right) \Gamma\left(\frac{1}{2}, 12 \pi\left(n+\frac{2}{3}\right)^2 y\right)
		\no	\\&	=
		\frac{i^{1-c}}{2}\sum_{r=0}^{c-1}(-1)^r\left(\zeta_c^{(3r+1)a}+\zeta_c^{-(3r+1)a}\right) q^{-\frac{(3c-2-6r)^2}{12}} 
		\no\\&\qquad\qquad \qquad\times R\left(c (6r+2-3c)\tau-\frac{c-1}{2} ;6c^2 \tau\right)\label{nonholo}.
	\end{align}
\end{proposition}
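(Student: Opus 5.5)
The plan is to start from identity \eqref{5132039} in the proof of Proposition \ref{pro828}. By that identity and the definition \eqref{defhr}, the difference $\widehat{\mathscr{R}^0}(a,c;\tau)-\mathscr{R}^0(a,c;\tau)$ equals
\[
\frac{q^{-\frac{1}{12}}}{2}\left(\zeta_{2c}^{-a}R\left(\tau+\frac{3a}{c};6\tau\right)+\zeta_{2c}^{a}R\left(\frac{3a}{c}-\tau;6\tau\right)\right).
\]
The first task is to show that this combination is purely non-holomorphic, i.e.\ that the $\operatorname{sgn}$-terms in Zwegers' $R$ leave no holomorphic remainder. For $u=\pm\tau+\frac{3a}{c}$ with modulus $6\tau$ we have $\operatorname{Im}(u)/\operatorname{Im}(6\tau)=\pm\frac16$. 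For $\nu\in\frac12+\mathbb{Z}$ this gives $\operatorname{sgn}(\nu)=\operatorname{sgn}(\nu\pm\frac16)$, hence
\[
\operatorname{sgn}(\nu)-E\left(\left(\nu\pm\tfrac16\right)\sqrt{12y}\right)=\operatorname{sgn}\left(\nu\pm\tfrac16\right)\beta\left(12y\left(\nu\pm\tfrac16\right)^2\right).
\]
Then I use $\beta(x)=\pi^{-1/2}\Gamma(\frac12,\pi x)$, so every term carries an incomplete Gamma factor $\Gamma(\frac12,12\pi(\nu\pm\frac16)^2y)$ and nothing holomorphic survives. This identifies the non-holomorphic part exactly with the displayed combination.

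Next I will expand both $R$'s termwise. Writing $\nu=n+\frac12$, the first $R$ has general term
\[
(-1)^n q^{-3\nu^2-\nu}\zeta_c^{-3a\nu}\operatorname{sgn}\left(n+\tfrac23\right)\Gamma\left(\tfrac12,12\pi\left(n+\tfrac23\right)^2y\right).
\]
Multiplying by $q^{-1/12}\zeta_{2c}^{-a}$ and completing the square gives $q^{-3n^2-4n-4/3}$ and the root of unity $\zeta_c^{-(3n+2)a}$. For the second $R$ the shift is $\nu-\frac16=n+\frac13$. After the substitution $n\mapsto -n-1$ this becomes $-(n+\frac23)$: the sign flips, the square is unchanged, and the root of unity becomes $\zeta_c^{(3n+2)a}$. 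Adding the two series produces the first expression in Proposition \ref{nohr}, including the overall sign, which I will check carefully.

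For the second equality I will split $n$ modulo $c$ and then match the resulting pieces against the definition of $R(\,\cdot\,;6c^2\tau)$. Write $n=cm+s$ with $0\le s\le c-1$; the relabeling $s\leftrightarrow r$ (possibly $s=r$ after an index reversal) will be fixed by matching exponents. On the other side, expand $R\left(c(6r+2-3c)\tau-\frac{c-1}{2};6c^2\tau\right)$ with $\mu\in\frac12+\mathbb{Z}$. Here $\operatorname{Im}(u)/\operatorname{Im}(6c^2\tau)=\frac{6r+2-3c}{6c}$, so the $E$-argument is
\[
\left(\mu+\frac{6r+2-3c}{6c}\right)\sqrt{12c^2y}=\left(c\mu+r+\tfrac13-\tfrac c2\right)\sqrt{12y}.
\]
Since $c\mu-\frac c2\in\mathbb{Z}$, the quantity $c\mu+r+\frac13-\frac c2$ runs over $\{n+\frac23\}$ (after a sign reflection) with $n\equiv$ a fixed residue mod $c$. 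By the same sign observation as before, each $R$ is again purely a $\Gamma(\frac12,\cdot)$-series. It remains to match the $q$-powers (after multiplying by $q^{-(3c-2-6r)^2/12}$), the roots of unity $\zeta_c^{\pm(3n+2)a}$ (these depend only on $n \bmod c$, which is where $\zeta_c^{\pm(3r+1)a}$ comes from after the reindexing), and the signs $(-1)^n$ against $(-1)^{\mu-1/2}e^{\pi i\mu(c-1)}$, which should collapse to $i^{1-c}(-1)^r$.

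The main obstacle is this last bookkeeping step: getting the residue-class correspondence between $n \bmod c$ and $r$ right, and checking that all the eighth-root-of-unity factors from $(-1)^{\mu-1/2}e^{\pi i \mu(c-1)}$ combine to exactly $i^{1-c}(-1)^r$ uniformly in $r$. I would first verify this numerically for small $c$ such as $c=1,2,3$, and then write out the general computation.
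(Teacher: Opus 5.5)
Your proposal is correct, but it reaches both expressions by a different route from the paper.

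\textbf{The paper's route.} The paper gets the first series directly from the definition \eqref{defhr}. It integrates $g_{\frac{2}{3},\frac12-\frac{3a}{c}}(6z)$ and $g_{\frac{1}{3},\frac12-\frac{3a}{c}}(6z)$ termwise against $(-i(z+\tau))^{-1/2}$. It gets the second expression by applying Zwegers' dissection identity \eqref{idr}, with $(\tau,\ell,u)\mapsto(6c\tau,c,\pm\tau+\frac{3a}{c})$, to the two $R$'s in \eqref{rno}; this produces \eqref{r1} and \eqref{r2}.

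\textbf{Your route.} You use one mechanism for both equalities: expand each $R$ from its definition. The shift in the $E$-argument is $\pm\frac16$, resp.\ $\frac{6r+2-3c}{6c}$, and it has absolute value less than $\frac12$. Hence $\operatorname{sgn}(\nu)-E(\cdot)$ collapses to $\operatorname{sgn}\cdot\beta$. You then prove the second equality by matching each $R(\,\cdot\,;6c^2\tau)$ with a residue class of the first series, rather than by transforming $R(\,\cdot\,;6\tau)$ into $R(\,\cdot\,;6c^2\tau)$.

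\textbf{Comparison.} Your route avoids both the integral evaluation and \eqref{idr}; it needs only \eqref{5132039}, already established in Proposition \ref{pro828}. It also makes visible that no holomorphic terms hide inside the $R$'s. The paper's route is shorter because all the root-of-unity bookkeeping is absorbed into \eqref{idr}.

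\textbf{The bookkeeping you flag closes uniformly in $r$.} Write $\mu=k+\frac12$.
\begin{enumerate}[(1)]
\item The $E$-argument is $(ck+r+\frac13)\sqrt{12y}$.
\item The exponent $-3c^2\mu^2-c(6r+2-3c)\mu-\frac{(3c-2-6r)^2}{12}$ equals $-3(ck+r+\frac13)^2$.
\item The factor $(-1)^{\mu-\frac12}e^{\pi i\mu(c-1)}=(-1)^{ck}i^{c-1}$ is not itself $i^{1-c}(-1)^r$, since it depends on $k$. Multiplied by the prefactor $i^{1-c}(-1)^r$, however, it gives exactly $(-1)^{ck+r}$.
\end{enumerate}
So the $r$-th summand on the right is precisely the part of the first series with $n=-(ck+r)-1$, i.e.\ $n\equiv -1-r \pmod c$; this is your index reversal $s=c-1-r$. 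For such $n$ one has
$(-1)^n\operatorname{sgn}(n+\frac23)=(-1)^{ck+r}\operatorname{sgn}(ck+r+\frac13)$,
$(n+\frac23)^2=(ck+r+\frac13)^2$, and
$\zeta_c^{\pm(3n+2)a}=\zeta_c^{\mp(3r+1)a}$.

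One small slip: your displayed general term omits the factor $\pi^{-1/2}$ coming from $\beta(x)=\pi^{-1/2}\Gamma(\frac12,\pi x)$. That factor is the source of the $\frac{1}{2\sqrt{\pi}}$ in the statement.
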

\begin{proof}
	Note that
	$$
	\int_{-\overline{\tau}}^{i \infty} \frac{e^{2 b \pi i z}}{\sqrt{-i(\tau+z)}} d z=\int_{2 i y}^{i \infty} \frac{e^{2 b \pi i(z-\tau)}}{\sqrt{-i z}} d z=i q^{-b} \int_{2 y}^{\infty} \frac{e^{-2 b \pi z}}{\sqrt{z}} d z=\frac{i q^{-b}}{\sqrt{2 \pi b}} \Gamma\left(\frac{1}{2} ; 4 \pi by\right) 
	$$
	with $b>0.$
	It follows
	\begin{align}
		& \int_{-\overline{\tau}}^{i \infty} \frac{g_{\frac{2}{3}, \frac{1}{2}-\frac{3 a}{c}}(6z )}{\sqrt{-i(z+\tau)}} d z\no
		\\	&=\sum_{n=-\infty}^{\infty}\left(n+\frac{2}{3}\right) e^{2 \pi i\left(n+\frac{2}{3}\right)\left(\frac{1}{2}-\frac{3 a}{c}\right)}\int_{-\overline{\tau}}^{i \infty} \frac{e^{\pi i\left(n+\frac{2}{3}\right)^2 6z}}{\sqrt{-i(z+\tau)}} d z\no \\
		& =\frac{i e^{\frac{2 \pi i}{3}} }{ \sqrt{6\pi}} \sum_{n=-\infty}^{\infty}(-1)^n \zeta_c^{-(3  n+2)a} q^{-3 n^2-4 n-\frac{4}{3}} \operatorname{sgn}\left(n+\frac{2}{3}\right) \Gamma\left(\frac{1}{2}, 12 \pi\left(n+\frac{2}{3}\right)^2 y\right).\label{g1}
	\end{align}
	Similarly, we have	
	\begin{align}
		&\int_{-\overline{\tau}}^{i \infty} \frac{g_{\frac{1}{3}, \frac{1}{2}-\frac{3a}{c}}(6z)}{\sqrt{-i(z+\tau)}} d z\no
		\\& =\frac{i e^{\frac{ \pi i}{3}} \zeta_{ c}^{- a}}{ \sqrt{6\pi}} \sum_{n=-\infty}^{\infty}(-1)^n \zeta_c^{-3 a n} q^{-3 n^2-2 n-\frac{1}{3}} \operatorname{sgn}\left(n+\frac{1}{3}\right) \Gamma\left(\frac{1}{2}, 12 \pi\left(n+\frac{1}{3}\right)^2 y\right)\no\\
		& =\frac{i e^{\frac{ \pi i}{3}} }{ \sqrt{6\pi}} \sum_{n=-\infty}^{\infty}(-1)^n \zeta_c^{(3  n+2)a} q^{-3 n^2-4 n-\frac{4}{3}} \operatorname{sgn}\left(n+\frac{2}{3}\right) \Gamma\left(\frac{1}{2}, 12 \pi\left(n+\frac{2}{3}\right)^2 y\right)\label{g2}.
	\end{align}
	Then we deduce from \eqref{defhr}, \eqref{g1} and \eqref{g2} that, the non-holomorphic part of $\widehat{\mathscr{R}^0}(a,c ; \tau)$ equals to the left side of \eqref{nonholo}.
	
	By \eqref{5132039}, we find that the non-holomorphic part of $\widehat{\mathscr{R}^0}(a,c ; \tau)$ is
	\begin{align}
		\frac{q^{-\frac{1}{12}}}{2}\left(\zeta_{2c}^{-a}	R\left(\tau+\frac{3a}{c};6\tau\right)+\zeta_{2c}^{a}	R\left(\frac{3a}{c}-\tau;6\tau\right)\right).\label{rno}
	\end{align}
	Recall the following identity (see \cite[p. 7]{zew1}):
	\begin{align}
		R(u ; \tau /\ell)
		= & \sum_{r=0}^{\ell-1} e^{-\pi i(r-(\ell-1) / 2)^2 \tau / \ell+2 \pi i(r-(\ell-1) / 2)(u+1 / 2)} \no\\
		&\qquad \times R(\ell u-r \tau+(\ell-1)(\tau+1) / 2 ;\ell \tau).\label{idr}
	\end{align}
	Replacing $(\tau,\ell,u)$ by $(6c\tau,c,\tau+\frac{3a}{c})$ in \eqref{idr} to obtain
	\begin{align}
		R\left(\tau+\frac{3a}{c} ; 6\tau\right)
		= & \sum_{r=0}^{c-1} e^{-\pi i(r-(c-1) / 2)^2 6\tau +2 \pi i(r-(c-1) / 2)\left(\tau+\frac{3a}{c}+1 / 2\right)} \no\\
		&\qquad \times R\left(c \left(\tau+\frac{3a}{c}\right)-r 6c\tau+(c-1)(6c\tau+1) / 2 ;6c^2 \tau\right)\no\\
		= &  i^{1-c}\zeta_c^{\frac{3a}{2}}\sum_{r=0}^{c-1} (-1)^r\zeta_c^{3ar} q^{\frac{(2r+1-c)(3c-1-6r)}{4}} \no\\
		&\qquad\qquad\quad \times R\left(c (6r+2-3c)\tau-\frac{c-1}{2} ;6c^2 \tau\right).\label{r1}
	\end{align}
	Similarly, 
	\begin{align}
		R\left(-\tau+\frac{3a}{c} ; 6\tau\right)=&	R\left(\tau-\frac{3a}{c} ; 6\tau\right)
		\no\\
		= &  i^{1-c}\zeta_c^{-\frac{3a}{2}}\sum_{r=0}^{c-1} (-1)^r\zeta_c^{-3ar} q^{\frac{(2r+1-c)(3c-1-6r)}{4}} \no\\
		&\qquad\qquad\quad \times R\left(c (6r+2-3c)\tau-\frac{c-1}{2} ;6c^2 \tau\right).\label{r2}
	\end{align}
	Substituting \eqref{r1} and \eqref{r2} into \eqref{rno} gives the right side of \eqref{nonholo}. This proves Proposition \ref{nohr}.
\end{proof}
As a consequence we have
\begin{corollary}
	Let
	\begin{align*}
		& \widetilde{\mathcal{S}}(r, c ; \tau):=q^{-\frac{1}{3}}S(r, c ; \tau)+\frac{1}{2}q^{-\frac{(3c-2-6r)^2}{12}}  R\left(c (6r+2-3c)\tau-\frac{c-1}{2} ;6c^2 \tau\right) .
	\end{align*}
	Then 
		\begin{align}
		&	\widehat{\mathscr{R}^0}(a,c; \tau)-i^{1-c}\sum_{r=0}^{c-1}(-1)^r\left(\zeta_c^{(3r+1)a}+\zeta_c^{-(3r+1)a}\right) \widetilde{\mathcal{S}}(r, c ; \tau)\no
		\\&=\mathscr{R}^0(a,c; \tau)-i^{1-c}q^{-\frac{1}{3}}\sum_{r=0}^{c-1}(-1)^r\left(\zeta_c^{(3r+1)a}+\zeta_c^{-(3r+1)a}\right)S(r, c ; \tau).\label{holo}
	\end{align}
In particular,
	the function
	\begin{align}
		\widehat{\mathscr{R}^0}(a,c; \tau)-i^{1-c}\sum_{r=0}^{c-1}(-1)^r\left(\zeta_c^{(3r+1)a}+\zeta_c^{-(3r+1)a}\right) \widetilde{\mathcal{S}}(r, c ; \tau)\no
	\end{align}
	is holomorphic in $\tau$ and has at worst poles at the cusps. 
\end{corollary}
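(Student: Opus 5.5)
The identity \eqref{holo} follows from the definition of $\widetilde{\mathcal{S}}$ together with Proposition \ref{nohr}; holomorphy then comes from cancellation, and the cusp statement from growth estimates.

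\textbf{Step 1: the identity.} Expand $\widetilde{\mathcal{S}}(r,c;\tau)$ in the left side of \eqref{holo}. The $q^{-\frac13}S(r,c;\tau)$ pieces reproduce exactly the subtracted sum on the right side. What remains is
\begin{align*}
\widehat{\mathscr{R}^0}(a,c;\tau)-\frac{i^{1-c}}{2}\sum_{r=0}^{c-1}(-1)^r\left(\zeta_c^{(3r+1)a}+\zeta_c^{-(3r+1)a}\right)q^{-\frac{(3c-2-6r)^2}{12}}R\left(c(6r+2-3c)\tau-\tfrac{c-1}{2};6c^2\tau\right).
\end{align*}
By \eqref{5132039} and the proof of Proposition \ref{nohr}, $\widehat{\mathscr{R}^0}(a,c;\tau)-\mathscr{R}^0(a,c;\tau)$ is precisely the expression \eqref{rno}. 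Substituting \eqref{r1} and \eqref{r2} into \eqref{rno} gives exactly the $R$-sum displayed above; this is the right-hand side of \eqref{nonholo}. So the $R$-terms cancel, leaving $\mathscr{R}^0(a,c;\tau)$, and \eqref{holo} follows.

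The only real bookkeeping is the exponent of $q$. Combining $q^{-1/12}$ with $q^{\frac{(2r+1-c)(3c-1-6r)}{4}}$, and the root of unity $\zeta_{2c}^{\mp a}\zeta_c^{\pm 3a/2}\zeta_c^{\pm3ar}=\zeta_c^{\pm(3r+1)a}$, must produce $q^{-\frac{(3c-2-6r)^2}{12}}$ times the coefficient $\zeta_c^{\pm(3r+1)a}$. Proposition \ref{nohr} has already carried out this computation, so we may cite it.

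\textbf{Step 2: holomorphy.} Holomorphy in $\tau$ is read off the right side of \eqref{holo}. First, $R^0(\zeta_c^a;q)$ converges for $|q|<1$, since the denominators $1-\zeta q^{2n+1}$ never vanish. Second, each $S(r,c;\tau)$ is a Lerch-type sum divided by $(q^{2c^2};q^{2c^2})_\infty$. Its denominators $1+(-1)^cq^{6c^2n+(2+6r-c)c}$ vanish only if $6c^2n+(2+6r-c)c=0$ and the sign condition holds. Since $6c^2n$ is divisible by $6c$ while $(2+6r-c)c\equiv (2-c)c \pmod{6c}$, a zero would force $6\mid 2-c$. In that case $(-1)^c=1$, and $1+q^0=2\neq0$. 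So there are no poles in $\mathbb{H}$.

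\textbf{Step 3: behaviour at the cusps.} This is the main obstacle. Write the function as the left side of \eqref{holo}, a combination of completed objects: $\widehat{\mathscr{R}^0}$ and the $\widetilde{\mathcal S}$. Under $\mathrm{SL}_2(\mathbb{Z})$ each of these transforms, via \eqref{ra3}/\eqref{ru1}, \eqref{appmo}, \eqref{trmu} and the elliptic shifts \eqref{appel}, into completed Appell–Lerch sums at rational arguments $u=a'\tau-b'$. At each cusp the non-holomorphic parts are controlled by \eqref{gth}: $R(a'\tau-b';\tau)=p(\tau)+O(y^{-1/2}e^{-\pi a'^2y-\epsilon y})$, with $p$ a finite sum of fractional powers of $q$. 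The holomorphic Appell–Lerch parts likewise have expansions with finitely many negative powers. The total is holomorphic, so its expansion at each cusp contains no terms that are non-holomorphic in $\tau$. Hence it is a $q$-expansion with at most finitely many negative exponents, which is what ``at worst poles at the cusps'' means.

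The care is needed in checking that the exponential error terms are genuinely small relative to the prefactors $q^{-(3c-2-6r)^2/12}$. I would handle this by first applying \eqref{appel} to reduce each argument to $0\le a'<1$, as in \cite{cjs1}.
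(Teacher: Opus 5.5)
Your proposal is correct and takes the same approach as the paper. The paper states the corollary as an immediate consequence of Proposition \ref{nohr}, i.e. of substituting \eqref{r1} and \eqref{r2} into \eqref{rno}, and it later controls the cusps, as you do, through Propositions \ref{prorc} and \ref{prosc} together with \eqref{gth} and Proposition \ref{procuu}. Your worry in Step 3 about the error terms being small relative to the prefactors is unnecessary. Because the total is holomorphic, the non-holomorphic remainders at each cusp must cancel exactly (apply $\partial/\partial\overline{\tau}$ and use uniqueness of the resulting $\overline{q}$-expansions), and this is the precise justification for your cancellation claim.
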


\section{Proofs of Theorems \ref{thm1}, \ref{thm2} and Corollary \ref{coro}}\label{s3}
\subsection{Transformation formulas}
We have
\begin{proposition}\label{pro1514}
	For positive integers $k$ and $B=\left(\begin{array}{ll}
		\alpha & \beta \\
		\gamma &\delta
	\end{array}\right)\in\Gamma_0\left(2kc\right)\cap\Gamma_1(c)$, we have
		\begin{align}
		\widehat{\mathscr{R}^0}(a,c ; kB\tau)&=\frac{\sqrt{\gamma\tau+\delta}}{\nu\left({}^{2k}B\right)}	(-1)^{\frac{\alpha-1}{2}(1+k\beta)+\frac{a\gamma}{2kc}+\frac{a^2\gamma(\delta-1)}{2kc^2}}
		\no	\\&\quad\times e^{\frac{-\pi i}{2}\left[k\beta-\frac{3\gamma a^2}{kc^2}\right]}			\widehat{\mathscr{R}^0}(a,c ; k\tau).\label{kr}
	\end{align}
\end{proposition}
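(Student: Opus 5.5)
We work from the representation \eqref{ra3} of Proposition \ref{pro828}:
\[
\widehat{\mathscr{R}^0}(a,c;\tau)=\frac{\zeta_{2c}^{-3a}q^{-3/4}}{\eta(2\tau)}\,\widehat{A_3}\Big(\tau+\frac{a}{c},0;2\tau\Big),
\]
and transform each factor separately under $\tau\mapsto kB\tau$. Set $\tau'=2k\tau$ and $B'={}^{2k}B=\begin{pmatrix}\alpha&2k\beta\\ \gamma/(2k)&\delta\end{pmatrix}$, which lies in $\mathrm{SL}_2(\mathbb Z)$ because $2kc\mid\gamma$. Then $2kB\tau=B'\tau'$ and $\gamma'\tau'+\delta=\gamma\tau+\delta$.

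\emph{Step 1: the eta factor.} By \eqref{treta},
\[
\eta(2kB\tau)=\nu({}^{2k}B)\sqrt{\gamma\tau+\delta}\,\eta(2k\tau).
\]
This produces the factor $\sqrt{\gamma\tau+\delta}/\nu({}^{2k}B)$ in \eqref{kr}, after the $(\gamma\tau+\delta)$ coming from $\widehat{A_3}$ is accounted for.

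\emph{Step 2: the elliptic variable.} Write
\[
kB\tau+\frac ac=\frac{k\alpha\tau+k\beta+\frac ac(\gamma\tau+\delta)}{\gamma\tau+\delta}=\frac{u}{\gamma\tau+\delta},
\]
and decompose the numerator as $u=k\tau+\frac ac+n_1\tau'+m_1$ up to the correction below. Since $\alpha\equiv\delta\equiv1\pmod c$ and $c\mid\gamma$, $2k\mid\gamma$, we have
\[
u-k\tau-\tfrac ac=\Big(\tfrac{\alpha-1}{2}+\tfrac{a\gamma}{2kc}\Big)\tau'+k\beta+\tfrac{a(\delta-1)}{c}.
\]
Here $n_1=\frac{\alpha-1}{2}+\frac{a\gamma}{2kc}\in\mathbb Z$ ($\alpha$ is odd), and $m_1=k\beta+\frac{a(\delta-1)}{c}\in\mathbb Z$. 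We then apply the modular transformation \eqref{appmo} with $\ell=3$, $v=0$ and matrix $B'$. This gives $\widehat{A_3}(u,0;\tau')$ times $(\gamma\tau+\delta)\,e^{-3\pi i\gamma' u^2/(\gamma\tau+\delta)}$. Next we apply the elliptic property \eqref{appel} with $(n_1,m_1)$ and $n_2=m_2=0$, reducing to $\widehat{A_3}(k\tau+\frac ac,0;2k\tau)$. This produces the factor $(-1)^{3(n_1+m_1)}e^{6\pi i n_1(k\tau+a/c)}e^{3\pi i n_1^2\tau'}$.

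\emph{Step 3: collecting the exponentials.} This is the main obstacle. We must combine the following pieces:
\begin{itemize}
\item $q^{-3/4}$ evaluated at $kB\tau$ against the same factor at $k\tau$;
\item $\zeta_{2c}^{-3a}$;
\item the quadratic factor $e^{-3\pi i\gamma u^2/(2k(\gamma\tau+\delta))}$;
\item the elliptic factors from Step 2.
\end{itemize}
All $\tau$-dependence must cancel, using $\alpha\delta-\beta\gamma=1$ to rewrite $\frac{\gamma(\alpha\tau+\beta)^2}{\gamma\tau+\delta}$ in the form $\alpha(\alpha\tau+\beta)-\frac{\alpha}{\gamma\tau+\delta}$, and similarly for the cross terms. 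The surviving root of unity should be simplified modulo $2$ using the parities of $\alpha,\delta,n_1$. I expect the result to be exactly
\[
(-1)^{\frac{\alpha-1}{2}(1+k\beta)+\frac{a\gamma}{2kc}+\frac{a^2\gamma(\delta-1)}{2kc^2}}\,e^{-\frac{\pi i}{2}\left[k\beta-\frac{3\gamma a^2}{kc^2}\right]}.
\]
To locate the term $(-1)^{\frac{\alpha-1}{2}(1+k\beta)}e^{-\pi i k\beta/2}$, I would compare with \eqref{etaj}, which has precisely this shape. This suggests that some $\nu$-factor bookkeeping can be absorbed via that lemma if the direct computation becomes unwieldy.

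Since this is pure bookkeeping, I would do it carefully in the order $\tau$-terms first, then constants. As a sanity check I would test it against $B=T^{m}$ ($\gamma=0$), where the result must reduce to the $q$-expansion shift $q^{-1/3}\mapsto e^{-2\pi i k m/3}$, consistent with $\nu(T^{2km})=e^{2\pi i km/12}$.
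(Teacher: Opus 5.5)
Your plan is the paper's proof: it too starts from \eqref{ra3}, transforms $\eta(2k\tau)$ and $\widehat{A_3}$ through ${}^{2k}B$ via \eqref{treta} and \eqref{appmo}, and applies \eqref{appel} with exactly your $n_1=\frac{\alpha-1}{2}+\frac{a\gamma}{2kc}$ and $m_1=k\beta+\frac{a(\delta-1)}{c}$, leaving the rest as simplification (the $\tau$-terms do cancel once you use $\gamma(\alpha\tau+\beta)=\alpha(\gamma\tau+\delta)-1$). In that simplification you do not need \eqref{etaj}, because $(-1)^{\frac{\alpha-1}{2}(1+k\beta)}e^{-\pi i k\beta/2}$ comes directly from $e^{-3\pi i k\alpha\beta/2}$ and $(-1)^{n_1+m_1}$; however, you will meet a leftover sign $(-1)^{a(\alpha-1)/c+a(\delta-1)/c}$ (from $e^{6\pi i n_1 a/c}$ and $(-1)^{m_1}$), which for even $c$ vanishes only because $\alpha\delta\equiv 1\pmod{2c}$ forces $\frac{\alpha-1}{c}\equiv\frac{\delta-1}{c}\pmod 2$, and the parities of $\alpha,\delta,n_1$ alone do not give this.
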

\begin{proof}
	Let
	\begin{align*}
		F_k(a,c;\tau)&:=\widehat{A_3}\left(k\tau+\frac{a}{c},0;2k\tau\right).
	\end{align*}
Then
\begin{align}
	\widehat{\mathscr{R}^0}(a,c ;k \tau)=\frac{\zeta_{2c}^{-3a}q^{\frac{-3k}{4}}}{\eta(2k\tau)}	F_k(a,c;\tau)\label{ptrr}.
\end{align}
Apply \eqref{appmo} to obtain
\begin{align}
&	F_k\left(a,c;B\tau\right)\no\\
&=	\widehat{A_3}\left(kB\tau+\frac{a}{c},0;2kB\tau\right)\no\\&=	\widehat{A_3}\left(\frac{k(\alpha\tau+\beta)}{\gamma\tau+\delta}+\frac{a(\gamma\tau+\delta)}{c(\gamma\tau+\delta)},0;\frac{\alpha(2k\tau)+2k\beta}{\frac{\gamma}{2k}(2k\tau)+\delta}\right)\no\\
	&=(\gamma\tau+\delta)e^{\frac{\pi i\gamma}{2k(\gamma\tau+\delta)}\left\{-3\left[k(\alpha\tau+\beta)+\frac{a(\gamma\tau+\delta)}{c}\right]^2\right\}}\times	\widehat{A_3}\left(k(\alpha\tau+\beta)+\frac{a(\gamma\tau+\delta)}{c},0;2k\tau\right) \no\\
	&=(\gamma\tau+\delta)e^{\frac{-3\pi i}{2}\left\{\frac{k\gamma(\alpha\tau+\beta)^2}{(\gamma\tau+\delta)}+\frac{2\gamma a(\alpha\tau+\beta)}{c}+\frac{\gamma a^2(\gamma\tau+\delta)}{kc^2}\right\}}\times	\widehat{A_3}\left(k(\alpha\tau+\beta)+\frac{a(\gamma\tau+\delta)}{c},0;2\tau\right)\no\\
	&=(\gamma\tau+\delta)
	e^{\frac{-3\pi i}{2}\left\{-\frac{k(\alpha\tau+\beta)}{\gamma\tau+\delta}+k\alpha(\alpha\tau+\beta)+\frac{2\gamma a(\alpha\tau+\beta)}{c}+\frac{\gamma a^2(\gamma\tau+\delta)}{kc^2}\right\}}
	\no	\\&\quad\times	\widehat{A_3}\left(k(\alpha\tau+\beta)+\frac{a(\gamma\tau+\delta)}{c},0;2k\tau\right)\label{1261918},
	\end{align}
where the last equality follows from 
\begin{align}
	\gamma(\alpha\tau+\beta)=\gamma \alpha \tau+\alpha\delta -1=\alpha(\gamma\tau+\delta)-1.\label{alde}
	\end{align}

Noting that $\gamma\equiv 0\pmod{2kc},\alpha\equiv 1\pmod{2}, \delta\equiv1\pmod{c}$ and using \eqref{appel}, we find that
\begin{align}
&\widehat{A_3}\left(k(\alpha\tau+\beta)+\frac{a(\gamma\tau+\delta)}{c},0;2k\tau\right)\no
	\\&=	\widehat{A_3}\left(k\tau+\frac{a}{c}+\left(\frac{\alpha-1}{2}+\frac{a\gamma}{2kc}\right)(2k\tau)+k\beta+\frac{a(\delta-1)}{c},0;2k\tau\right)\no\\&=
	(-1)^{\frac{\alpha-1}{2}+\frac{a\gamma}{2kc}+k\beta+\frac{a(\delta-1)}{c}}e^{6\pi i\left(k\tau+\frac{a}{c}\right)\left(\frac{\alpha-1}{2}+\frac{a\gamma}{2kc}\right)+6k\pi i\tau\left(\frac{\alpha-1}{2}+\frac{a\gamma}{2kc}\right)^2}
	\no	\\&\quad\times	\widehat{A_3}\left(k\tau+\frac{a}{c},0;2k\tau\right).\label{1261917}
\end{align}
Substituting \eqref{1261917} into \eqref{1261918} and simplifying yields
\begin{align}
	&		F_k\left(a,c;B\tau\right)\no\\
	&=(\gamma\tau+\delta)e^{\frac{3k\pi i}{2}\frac{\alpha\tau+\beta}{\gamma\tau+\delta}-\frac{3k\pi i\tau}{2}}		(-1)^{\frac{\alpha-1}{2}(1+k\beta)+\frac{a\gamma}{2kc}+k\beta+\frac{a^2\gamma(\delta-1)}{2kc^2}}
		\no	\\&\quad\times e^{\frac{-3\pi i}{2}\left[k\beta-\frac{\gamma a^2}{kc^2}\right]}		F_k\left(a,c;\tau\right).\no
\end{align}
This together with \eqref{treta} and \eqref{ptrr} gives \eqref{kr}.

\end{proof}

To study the cusp conditions of $\widehat{\mathscr{R}^0}(a,c ; kB\tau)$, we need its transformation formula under $\mathrm{SL}_{2}(\mathbb{Z})$. We suppose $\alpha$ and $\gamma$ are integers with $\gcd(\alpha,\gamma)=1$ and take $B=\left(\begin{smallmatrix}\alpha&\beta\\ \gamma&\delta\end{smallmatrix}\right)\in\mathrm{SL}_{2}(\mathbb{Z})$. Let $j(B,\tau):=\gamma \tau+\delta$. Then we can verify that
\begin{align}
	j(BC,\tau)=j(B, C\tau)j(C,\tau).\no
\end{align}
For positive integers $k,$ we set $g=\gcd(\gamma,6k)$, $m=\frac{6k\alpha}{g}$, and $n=\frac{\gamma}{g}$. Since $\gcd(m,n)=1$, we take $L=(\begin{smallmatrix}m&s\\ n&t\end{smallmatrix})\in\mathrm{SL}_{2}(\mathbb{Z})$. Set
\[
C=L^{-1}\left(\begin{matrix}6k\alpha&6k\beta\\ \gamma&\delta\end{matrix}\right)=\left(\begin{matrix}g&6k\beta t-\delta s\\ 0&6k/g\end{matrix}\right).
\]
\begin{proposition}\label{prorc}
	Let $\zeta^*$ be some root of unity (might represent different values in different places).  
	Then we have
	\begin{align}
		&(\gamma\tau+\delta)^{-\frac{1}{2}}	\widehat{\mathscr{R}^0}(a,c ; kB\tau)\no
		\\&=\zeta^*\sqrt{\frac{g}{6k}}
		\exp\left(\pi i C\tau\left(\frac{anm}{c}-\frac{m^2}{36}-\frac{9a^2n^2}{c^2}\right)\right)
		\no	\\&\quad\times\widehat{\mu}\left(\left(\frac{m}{2}-\frac{an}{c}\right)C\tau+\frac{s}{2}-\frac{at}{c},\left(\frac{m}{3}+\frac{2an}{c}\right)C\tau+\frac{s}{3}+\frac{2at}{c};C\tau\right)\no
		\\&\quad+\zeta^*\sqrt{\frac{g}{6k}}
		\exp\left(-\pi i C\tau\left(\frac{anm}{c}+\frac{m^2}{36}+\frac{9a^2n^2}{c^2}\right)\right)
		\no	\\&\quad\times\widehat{\mu}\left(\left(\frac{m}{6}-\frac{an}{c}\right)C\tau+\frac{s}{6}-\frac{at}{c},\left(\frac{m}{3}+\frac{2an}{c}\right)C\tau+\frac{s}{3}+\frac{2at}{c};C\tau\right)\label{rtrc}.
	\end{align} 
\end{proposition}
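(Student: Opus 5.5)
The plan is to start from the $\widehat{\mu}$-representation \eqref{ru1} of Proposition \ref{pro828}, with $\tau$ replaced by $kB\tau$:
\begin{align*}
\widehat{\mathscr{R}^0}(a,c;kB\tau)=-ie^{-\frac{\pi i kB\tau}{6}}\Big(\zeta_{2c}^{a}\widehat{\mu}\big(3kB\tau-\tfrac{a}{c},\tfrac{2a}{c}+2kB\tau;6kB\tau\big)+\zeta_{2c}^{-a}\widehat{\mu}\big(kB\tau-\tfrac{a}{c},\tfrac{2a}{c}+2kB\tau;6kB\tau\big)\Big).
\end{align*}
The key observation is that
\begin{align*}
6kB\tau=\frac{6k\alpha\tau+6k\beta}{\gamma\tau+\delta}=L(C\tau),
\end{align*}
which is exactly why $L$ and $C$ were introduced. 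Writing $w=C\tau$, we have $6kB\tau=(mw+s)/(nw+t)$, and therefore
\begin{align*}
kB\tau=\frac{1}{6}\cdot\frac{mw+s}{nw+t}.
\end{align*}
Each elliptic variable can then be written over the common denominator $nw+t$. For instance,
\begin{align*}
3kB\tau-\frac{a}{c}=\frac{(\frac{m}{2}-\frac{an}{c})w+\frac{s}{2}-\frac{at}{c}}{nw+t},
\end{align*}
and similarly
\begin{align*}
2kB\tau+\frac{2a}{c}=\frac{(\frac{m}{3}+\frac{2an}{c})w+\frac{s}{3}+\frac{2at}{c}}{nw+t},\qquad kB\tau-\frac{a}{c}=\frac{(\frac{m}{6}-\frac{an}{c})w+\frac{s}{6}-\frac{at}{c}}{nw+t}.
\end{align*}
These numerators are exactly the arguments appearing in \eqref{rtrc}.

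Next I will apply the modular transformation \eqref{trmu} for $\widehat{\mu}$ with the matrix $L$. This replaces each $\widehat{\mu}(\cdot,\cdot;L w)$ by
\begin{align*}
\nu^{-3}(L)\sqrt{nw+t}\;e^{-\pi i n(u-v)^2/(nw+t)}\,\widehat{\mu}(u,v;w),
\end{align*}
where $u,v$ are the numerators above. The factor $\nu^{-3}(L)$, the constants $\zeta_{2c}^{\pm a}$ and the factor $-i$ are absorbed into $\zeta^*$.

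For the automorphy factor, the cocycle relation gives $j(L,C\tau)j(C,\tau)=j(\left(\begin{smallmatrix}6k\alpha&6k\beta\\ \gamma&\delta\end{smallmatrix}\right),\tau)=\gamma\tau+\delta$. Since $j(C,\tau)=6k/g$, this yields $nw+t=\frac{g}{6k}(\gamma\tau+\delta)$. Hence $\sqrt{nw+t}=\sqrt{g/(6k)}\sqrt{\gamma\tau+\delta}$, up to a harmless sign choice of square root, and multiplying by $(\gamma\tau+\delta)^{-1/2}$ leaves the constant $\sqrt{g/6k}$.

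The main obstacle is collecting the exponential factors into the stated shape $\exp(\pi i w\,Q)$ with no residual dependence on $1/(nw+t)$. There are two sources: the prefactor $e^{-\pi i kB\tau/6}=e^{-\frac{\pi i}{36}\frac{mw+s}{nw+t}}$ and the quadratic term $e^{-\pi i n(u-v)^2/(nw+t)}$. For the first $\widehat{\mu}$ term, $u-v=\big((\frac{m}{6}-\frac{3an}{c})w+\frac{s}{6}-\frac{3at}{c}\big)$. I plan to write $mw+s$ and $n(Xw+Y)^2$ modulo multiples of $(nw+t)$, using $mt-ns=1$, i.e.\ the identity \eqref{alde} in the form $n(mw+s)=m(nw+t)-1$. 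Multiplying numerator and denominator by $n$ this way should make the $1/(nw+t)$ parts of the two exponentials cancel. What remains is a term linear in $w$ plus a constant, and the constant exponential goes into $\zeta^*$.

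I will verify this cancellation by explicitly expanding the coefficient of $w$ for the first term. That coefficient should come out as $\frac{anm}{c}-\frac{m^2}{36}-\frac{9a^2n^2}{c^2}$, and for the second term as $-(\frac{anm}{c}+\frac{m^2}{36}+\frac{9a^2n^2}{c^2})$. The sign difference arises because $\frac{m}{6}$ versus $\frac{m}{2}$ changes the cross term with $\frac{m}{36}$. This bookkeeping is where errors are most likely, so I would check it by comparing leading $q$-powers in the special case $B=I$ (where $L,C$ are explicit).
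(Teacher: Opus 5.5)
Your proposal is correct and is essentially the paper's own proof. The paper likewise substitutes $kB\tau$ into \eqref{ru1}, uses $6kB\tau=LC\tau$ and $\gamma\tau+\delta=\frac{6k}{g}(nC\tau+t)$, and applies \eqref{trmu} with $L$. It then uses $mt-ns=1$ to split the quadratic exponential into $\exp\bigl(\frac{\pi i(mC\tau+s)}{36(nC\tau+t)}\bigr)$, which cancels $e^{-\pi i kB\tau/6}=e^{-\pi i LC\tau/36}$, times $\exp(\pi i C\tau\,Q)$ and a root of unity. The coefficients $Q$ you anticipate do come out as stated; the sign flip in the $\frac{anm}{c}$ term comes from $u-v=\pm\frac{mC\tau+s}{6}-\frac{3a}{c}(nC\tau+t)$ in the two terms.
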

\begin{proof}
	We verify that $6kB\tau=LC\tau$ and $(\gamma\tau+\delta)=j(LC,\tau)=j(L,C\tau)\cdot j(C,\tau)=j(L,C\tau)\cdot\frac{6k}{g}$. 
	Thus 
	\begin{align}
		j(L,C\tau)=nC\tau+t
		\intertext{and}
		LC\tau	j(L,C\tau)=mC\tau+s
	\end{align}
	Applying \eqref{trmu} yields
	\begin{align*}
		&\widehat{\mu}\left(3kB\tau-\frac{a}{c},\frac{2a}{c}+2kB\tau;6kB\tau\right)\\&=\widehat{\mu}\left(\frac{LC\tau}{2}-\frac{a}{c},\frac{2a}{c}+\frac{LC\tau}{3};LC\tau\right)
		\\&=\widehat{\mu}\left(\left(\frac{LC\tau}{2}-\frac{a}{c}\right)\frac{j(L,C\tau)}{j(L,C\tau)},\left(\frac{2a}{c}+\frac{LC\tau}{3}\right)\frac{j(L,C\tau)}{j(L,C\tau)};LC\tau\right)
		\\&=\widehat{\mu}\left(\frac{mC\tau+s}{2j(L,C\tau)}-\frac{a(nC\tau+t)}{cj(L,C\tau)},\frac{2a(nC\tau+t)}{cj(L,C\tau)}+\frac{mC\tau+s}{3j(L,C\tau)};\frac{mC\tau+s}{nC\tau+t}\right)
		\\&=\sqrt{\frac{(\gamma\tau+\delta)g}{6k}}\nu(L)^{-3}
		\exp\left(-\frac{n\pi i\left[\frac{mC\tau+s}{6}-\frac{3a(nC\tau+t)}{c}\right]^2}{nC\tau+t}\right)
		\\&\quad\times\widehat{\mu}\left(\left(\frac{m}{2}-\frac{an}{c}\right)C\tau+\frac{s}{2}-\frac{at}{c},\left(\frac{m}{3}+\frac{2an}{c}\right)C\tau+\frac{s}{3}+\frac{2at}{c};C\tau\right)
		\\&=\zeta^*\sqrt{\frac{(\gamma\tau+\delta)g}{6k}}
		\exp\left(\frac{\pi i(mC\tau+s)}{36(nC\tau+t)}+\pi i C\tau\left(\frac{anm}{c}-\frac{m^2}{36}-\frac{9a^2n^2}{c^2}\right)\right)
		\\&\quad\times\widehat{\mu}\left(\left(\frac{m}{2}-\frac{an}{c}\right)C\tau+\frac{s}{2}-\frac{at}{c},\left(\frac{m}{3}+\frac{2an}{c}\right)C\tau+\frac{s}{3}+\frac{2at}{c};C\tau\right)\label{u1}
	\end{align*}
	It follows 
	\begin{align}
		&(\gamma\tau+\delta)^{-\frac{1}{2}}e^{-\frac{\pi ikB\tau}{6}}\widehat{\mu}\left(3kB\tau-\frac{a}{c},\frac{2a}{c}+2kB\tau;6kB\tau\right)\no\\&=(\gamma\tau+\delta)^{-\frac{1}{2}}e^{-\frac{\pi iLC\tau}{36}}\widehat{\mu}\left(3kB\tau-\frac{a}{c},\frac{2a}{c}+2kB\tau;6kB\tau\right)
		\no	\\&=\zeta^*\sqrt{\frac{g}{6k}}
		\exp\left(\pi i C\tau\left(\frac{anm}{c}-\frac{m^2}{36}-\frac{9a^2n^2}{c^2}\right)\right)
		\no	\\&\quad\times\widehat{\mu}\left(\left(\frac{m}{2}-\frac{an}{c}\right)C\tau+\frac{s}{2}-\frac{at}{c},\left(\frac{m}{3}+\frac{2an}{c}\right)C\tau+\frac{s}{3}+\frac{2at}{c};C\tau\right)
	\end{align}
	Similarly, there holds
	\begin{align}
		&(\gamma\tau+\delta)^{-\frac{1}{2}}e^{-\frac{\pi ikB\tau}{6}}\widehat{\mu}\left(kB\tau-\frac{a}{c},\frac{2a}{c}+2kB\tau;6kB\tau\right)
		\no	\\&=\zeta^*\sqrt{\frac{g}{6k}}
		\exp\left(-\pi i C\tau\left(\frac{anm}{c}+\frac{m^2}{36}+\frac{9a^2n^2}{c^2}\right)\right)
		\no	\\&\quad\times\widehat{\mu}\left(\left(\frac{m}{6}-\frac{an}{c}\right)C\tau+\frac{s}{6}-\frac{at}{c},\left(\frac{m}{3}+\frac{2an}{c}\right)C\tau+\frac{s}{3}+\frac{2at}{c};C\tau\right).\label{u2}
	\end{align}
	By \eqref{ru1}, we have  
	\begin{align}
		&(\gamma\tau+\delta)^{-\frac{1}{2}}	\widehat{\mathscr{R}^0}(a,c ; kB\tau)
		\no\\	&=-i \zeta_{2c}^a (\gamma\tau+\delta)^{-\frac{1}{2}}e^{-\frac{\pi ikB\tau}{6}}\widehat{\mu}\left(3kB\tau-\frac{a}{c},\frac{2a}{c}+2kB\tau;6kB\tau\right)
		\no\\&\quad-i \zeta_{2c}^{-a}(\gamma\tau+\delta)^{-\frac{1}{2}}e^{-\frac{\pi ikB\tau}{6}}\widehat{\mu}\left(kB\tau-\frac{a}{c},\frac{2a}{c}+2kB\tau;6kB\tau\right).\label{prtrc}
	\end{align}
	Substituting \eqref{u1} and \eqref{u2} into \eqref{prtrc} gives \eqref{rtrc}.
\end{proof}

	Next, we study transformation formulas of $\widetilde{\mathcal{S}}(r, c ; \tau)$.
	To do this, we first prove
	\begin{proposition}
		Let 
		\begin{align}
			\widehat{	G}(r,c;\tau):=	q^{-\frac{(6r+2-c)(6r+2-5c)}{12}}
			\widehat{A_1}\left(c(6r+2-c)\tau-\frac{c-1}{2},2c^2\tau;6c^2\tau\right).\label{defda}
		\end{align}
		Then 
		\begin{align}
			\widehat{	G}(r,c;B\tau)
			&=(\gamma\tau+\delta)\frac{\nu^2\left(B\right)\nu\left({}^{2c^2}B\right)}{\nu^5\left({}^{2}B\right)}\widehat{	G}(r,c;\tau).\label{dago}
		\end{align}
		with
		$B=\left(\begin{array}{ll}
			\alpha & \beta \\
			\gamma &\delta
		\end{array}\right)\in\Gamma_0\left([\gcd(c,2)]^212c^2\right)\cap\Gamma_1(6c).$
	\end{proposition}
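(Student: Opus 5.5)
We follow the pattern of Proposition \ref{pro1514}: apply the modular transformation \eqref{appmo} of $\widehat{A_1}$, then the elliptic transformation \eqref{appel} to bring the shifted arguments back to the original ones, and finally absorb the leftover exponentials and roots of unity into eta-multipliers.

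\textbf{Step 1: modular transformation.} Put $N=6c^2$, and write $u(\tau)=c(6r+2-c)\tau-\frac{c-1}{2}$ and $v(\tau)=2c^2\tau$. Since $\gamma\equiv0\pmod{N}$, we have
\[
N B\tau=\frac{\alpha(N\tau)+N\beta}{\frac{\gamma}{N}(N\tau)+\delta},
\]
with the matrix ${}^{N}B\in\mathrm{SL}_2(\mathbb{Z})$. Multiplying $u(B\tau)$ and $v(B\tau)$ by $(\gamma\tau+\delta)/(\gamma\tau+\delta)$ and applying \eqref{appmo} with $\ell=1$ and modular variable $N\tau$ gives
\[
\widehat{A_1}\bigl(u(B\tau),v(B\tau);NB\tau\bigr)=(\gamma\tau+\delta)\,e^{\pi i\frac{\gamma}{N}\left(-U^2+2UV\right)/(\gamma\tau+\delta)}\,\widehat{A_1}(U,V;N\tau),
\]
where
\[
U=c(6r+2-c)(\alpha\tau+\beta)-\tfrac{c-1}{2}(\gamma\tau+\delta),\qquad V=2c^2(\alpha\tau+\beta).
\]

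\textbf{Step 2: elliptic transformation.} Next we rewrite $U$ and $V$ as $u(\tau)+n_1N\tau+m_1$ and $v(\tau)+n_2N\tau+m_2$ with integer shifts. Using $\alpha\equiv1\pmod{6c}$ and $N\mid\gamma$, we take
\[
n_1=\frac{c(6r+2-c)(\alpha-1)}{N}-\frac{(c-1)\gamma}{2N},\qquad m_1=c(6r+2-c)\beta-\frac{(c-1)(\delta-1)}{2},
\]
\[
n_2=\frac{\alpha-1}{3},\qquad m_2=2c^2\beta.
\]
Integrality of these shifts is where the level hypotheses are used.
\begin{itemize}
\item For $n_2$, the condition $\alpha\equiv1\pmod 3$ comes from $6c\mid\alpha-1$, which holds for $B\in\Gamma_1(6c)$.
\item For $n_1$, the first term is integral once $6c\mid(\alpha-1)(6r+2-c)$; this holds because $\alpha\equiv1\pmod{6c}$.
\item Still for $n_1$, the term $(c-1)\gamma/(2N)$ requires $12c^2\mid\gamma$ when $c$ is odd, and $[\gcd(c,2)]^2 12c^2=48c^2$ divides $\gamma$ when $c$ is even (this is why the factor $[\gcd(c,2)]^2$ appears).
\item For $m_1$, the parity of $(c-1)(\delta-1)/2$ is handled by $\delta\equiv1\pmod{6c}$ and, when $c$ is even, by $\delta$ being odd.
\end{itemize}
Then \eqref{appel} expresses $\widehat{A_1}(U,V;N\tau)$ as an explicit sign times $e^{2\pi i u(n_1-n_2)}e^{-2\pi i v n_1}q^{Nn_1^2/2-Nn_1n_2}$ times $\widehat{A_1}(u,v;N\tau)$.

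\textbf{Step 3: collecting the exponentials.} The main obstacle is the bookkeeping. All $\tau$-dependent exponentials, namely those from Step 1, from Step 2, and from the prefactor $q^{-(6r+2-c)(6r+2-5c)/12}$ evaluated at $B\tau$ versus $\tau$, must cancel. Following \eqref{alde}, we use $\gamma(\alpha\tau+\beta)=\alpha(\gamma\tau+\delta)-1$ to split $-U^2+2UV$ divided by $(\gamma\tau+\delta)$ into a polynomial in $\tau$ plus a multiple of $B\tau$. We then check that the quadratic and linear terms in $\tau$ cancel against the $q$-powers from \eqref{appel}, while the $B\tau$-part cancels the prefactor. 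What survives is a constant root of unity $\epsilon(B)$, which is a product of $(-1)^{\cdots}$ and $e^{\pi i\beta(\cdots)}$ factors, so that
\[
\widehat{G}(r,c;B\tau)=(\gamma\tau+\delta)\,\epsilon(B)\,\widehat{G}(r,c;\tau).
\]

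\textbf{Step 4: identifying the multiplier.} It remains to show
\[
\epsilon(B)=\frac{\nu^2(B)\,\nu({}^{2c^2}B)}{\nu^5({}^{2}B)}.
\]
For this we evaluate each $\nu$ with the $\delta$-odd branch of the formula following \eqref{treta}, using the congruences $\gamma\equiv0\pmod{12c^2}$ and $\alpha\equiv\delta\equiv1\pmod{6c}$. Alternatively, we can write the quotient as $\frac{\nu^2(B)}{\nu^4({}^2B)}\cdot\frac{\nu({}^{2c^2}B)}{\nu({}^2B)}$, evaluate the first factor by \eqref{etaj}, and evaluate the second directly. Both sides then reduce to $(-1)^{\text{linear in }\beta,\ (\alpha-1)/2}\,e^{\pi i\beta(\cdot)}$, and comparing exponents modulo $2$ finishes the proof. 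I expect this final congruence matching, especially its parity dependence on $c$, to be the most delicate routine part of the argument.
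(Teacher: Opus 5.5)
Your proposal is correct and follows the paper's proof essentially step for step. The paper applies \eqref{appmo}, then \eqref{appel} with exactly your shifts $n_1,m_1,n_2,m_2$, and cancels the $\tau$-dependence via \eqref{alde}, leaving $(-1)^{c\frac{\alpha-1}{6}(1+\beta)}e^{\frac{\pi i\beta(c^2-1)}{6}-\frac{\pi i\beta}{2}}$. It then uses your second option in Step 4, i.e.\ \eqref{etaj} combined with $\nu({}^{2c^2}B)/\nu({}^2B)=e^{\pi i\beta(c^2-1)/6}$ from \eqref{treta}.

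One small correction to Step 2: integrality of $n_1$ needs only $12c^2\mid\gamma$ for every $c$. The extra factor $4$ for even $c$ is used instead to make the Step 1 factor $e^{-\pi i(c-1)^2\gamma\delta/(24c^2)}$ equal to $1$, and to get $24\mid(1-c^2)\frac{\gamma}{2c^2}$ when evaluating $\nu({}^{2c^2}B)/\nu({}^2B)$.
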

	\begin{proof}
		By \eqref{appmo}, for
		$B=\left(\begin{array}{ll}
			\alpha & \beta \\
			\gamma &\delta
		\end{array}\right)\in\Gamma_0\left([\gcd(c,2)]^212c^2\right)\cap\Gamma_1(6c),$  we have
		\begin{align}
			&	e^{\frac{(6r+2-c)(6r+2-5c)\pi i}{6}\left(\frac{\alpha\tau+\beta}{\gamma\tau+\delta}\no
				\right)}	\widehat{	G}(r,c;B\tau)\\&	=
			\widehat{A_1}\left(c(6r+2-c)B\tau-\frac{c-1}{2},2c^2B\tau;6c^2B\tau\right)\no\\&=	\widehat{A_1}\left(c(6r+2-c)\frac{\alpha\tau+\beta}{\gamma\tau+\delta}-\frac{(c-1)(\gamma\tau+\delta)}{2(\gamma\tau+\delta)},2c^2\frac{\alpha\tau+\beta}{\gamma\tau+\delta};\frac{\alpha(6c^2\tau)+6c^2\beta}{\frac{\gamma}{6c^2}(6c^2\tau)+\delta}\right)\no\\
			&=(\gamma\tau+\delta)e^{\frac{\pi i\gamma}{6c^2(\gamma\tau+\delta)}\left\{-\left[c(6r+2-c)(\alpha\tau+\beta)-\frac{(c-1)(\gamma\tau+\delta)}{2}\right]^2+2\left[c(6r+2-c)(\alpha\tau+\beta)-\frac{(c-1)(\gamma\tau+\delta)}{2}\right]\left[2c^2(\alpha\tau+\beta)\right]\right\}}
			\no\\&\quad	\times	\widehat{A_1}\left(c(6r+2-c)(\alpha\tau+\beta)-\frac{(c-1)(\gamma\tau+\delta)}{2},2c^2(\alpha\tau+\beta);6c^2\tau\right) \no\\
			&=(\gamma\tau+\delta)q^{\frac{\alpha(6r+2-c)}{12}\left(\frac{(c-1)\gamma}{c}+\alpha (5c-6r-2)\right)-\frac{\gamma}{12c^2}\left[\frac{(c-1)^2\gamma}{4}+2c^2(c-1)\alpha\right]}\no\\
			&\quad\times	e^{\frac{(6r+2-c)\pi i}{6}\left\{(6r+2-5c)\frac{\alpha\tau+\beta}{\gamma\tau+\delta}
				+\frac{\beta(c-1)\gamma}{c}+\alpha\beta(5c-6r-2)\right\}}
			\no	\\&\quad\times 	\widehat{A_1}\left(c(6r+2-c)(\alpha\tau+\beta)-\frac{(c-1)(\gamma\tau+\delta)}{2},2c^2(\alpha\tau+\beta);6c^2\tau\right),\label{sa1}
		\end{align}
		where the last equality follows from 
		\eqref{alde}.
		
		Noting that $\gamma\equiv 0\pmod{12c^2},\alpha\equiv 1\pmod{6c}, \delta\equiv1\pmod{2}$ and using \eqref{appel}, we find that
		\begin{align}
			&\widehat{A_1}\left(c(6r+2-c)(\alpha\tau+\beta)-\frac{(c-1)(\gamma\tau+\delta)}{2},2c^2(\alpha\tau+\beta);6c^2\tau\right)\no
			\\&=	\widehat{A_1}\left(c(6r+2-c)\tau-\frac{c-1}{2}+k(6c^2\tau)+l,2c^2\tau+m(6c^2\tau)+n;6c^2\tau\right)\no\\&=
			(-1)^{k+l}e^{2\pi i\left[c(6r+2-c)\tau-\frac{c-1}{2}\right](k-m)-2k\pi i \left(2c^2\tau\right)}q^{3c^2k^2-6c^2km}
			\no	\\&\quad\times	\widehat{A_1}\left(c(6r+2-c)\tau-\frac{c-1}{2},2c^2\tau;6c^2\tau\right),\label{1261917a1}
		\end{align}
		with $k=\frac{\alpha-1}{6c}(6r+2-c)-\frac{(c-1)\gamma}{12c^2},l=c(6r+2-c)\beta-\frac{(c-1)(\delta-1)}{2},m=\frac{\alpha-1}{3},n=2c^2\beta$.
		Substituting \eqref{1261917a1} into \eqref{sa1} and simplifying yields
		\begin{align}
			&		e^{\frac{(6r+2-c)(6r+2-5c)\pi i}{6}\left(\frac{\alpha\tau+\beta}{\gamma\tau+\delta}
				\right)}	\widehat{	G}(r,c;B\tau)\no\\
			&=(\gamma\tau+\delta)(-1)^{c\frac{\alpha-1}{6}\left(1+\beta\right)}e^{\frac{\pi i\beta(c^2-1)}{6}-\frac{\pi i\beta}{2}}	e^{\frac{(6r+2-c)(6r+2-5c)\pi i}{6}\left(\frac{\alpha\tau+\beta}{\gamma\tau+\delta}
				-\tau\right)}
			\no\\&\quad	\times	\widehat{A_1}\left(c(6r+2-c)\tau-\frac{c-1}{2},2c^2\tau;6c^2\tau\right).\no
		\end{align}
		Thus
		\begin{align}
			\widehat{	G}(r,c;B\tau)
			&=	(\gamma\tau+\delta)(-1)^{c\frac{\alpha-1}{6}\left(1+\beta\right)}e^{\frac{\pi i\beta(c^2-1)}{6}-\frac{\pi i\beta}{2}}\widehat{	G}(r,c;\tau)
			\no
		\end{align}
		with
		$B=\left(\begin{array}{ll}
			\alpha & \beta \\
			\gamma &\delta
		\end{array}\right)\in\Gamma_0\left([\gcd(c,2)]^212c^2\right)\cap\Gamma_1(6c).$
		Apply 
		\eqref{treta} to obtain 
		\begin{align}
			\frac{\nu\left({}^{2c^2}B\right)}{\nu\left({}^2B\right)}=	e^{\frac{\pi i\beta(c^2-1)}{6}} \label{nv2202}
		\end{align}
		with
		$B=\left(\begin{array}{ll}
			\alpha & \beta \\
			\gamma &\delta
		\end{array}\right)\in\Gamma_0\left([\gcd(c,2)]^212c^2\right)\cap\Gamma_1(6c).$ It follows 
		\begin{align}
			\widehat{	G}(r,c;B\tau)
			&=	(\gamma\tau+\delta)(-1)^{c\frac{\alpha-1}{6}\left(1+\beta\right)}e^{-\frac{\pi i\beta}{2}}\frac{\nu\left({}^{2c^2}B\right)}{\nu\left({}^2B\right)}\widehat{	G}(r,c;\tau)
			\no.
		\end{align}
		This together with \eqref{etaj} gives 
		\eqref{dago}.
	\end{proof}
	\begin{corollary}
		For $B=\left(\begin{array}{ll}
			\alpha & \beta \\
			\gamma &\delta
		\end{array}\right)\in\Gamma_0\left([\gcd(c,2)]^212c^2\right)\cap\Gamma_1(6c),$ 	we have
		\begin{align}
			\widetilde{\mathcal{S}}(r, c ; B\tau)
			=\sqrt{\gamma\tau+\delta}	\frac{\nu^2\left(B\right)}{\nu^5\left({}^{2}B\right)}	\widetilde{\mathcal{S}}(r, c ;B\tau). \label{trs}
		\end{align}
	\end{corollary}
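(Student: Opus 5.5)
The plan is to express $\widetilde{\mathcal{S}}(r,c;\tau)$ as an eta-quotient times $\widehat{G}(r,c;\tau)$. The transformation law \eqref{trs} then follows by combining \eqref{dago} with \eqref{treta}. (The right-hand side of \eqref{trs} is of course meant to be evaluated at $\tau$, not $B\tau$.)

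First I will verify an identity of the form
\begin{align*}
\widetilde{\mathcal{S}}(r,c;\tau)=\zeta\,\frac{\widehat{G}(r,c;\tau)}{\eta(2c^2\tau)}
\end{align*}
with an explicit constant root of unity $\zeta$. On the holomorphic side, expand $A_1(u,v;6c^2\tau)$ from \eqref{defapp} with $u=c(6r+2-c)\tau-\frac{c-1}{2}$ and $v=2c^2\tau$. The factor $e^{2\pi i u}$ in the denominator becomes $(-1)^{c-1}q^{c(6r+2-c)}$, which reproduces the terms $1+(-1)^cq^{6c^2n+(2+6r-c)c}$. The numerator becomes $(-1)^nq^{3c^2n^2+3c^2n+2c^2n}=(-1)^nq^{3c^2n^2+5c^2n}$. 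Dividing by $\eta(2c^2\tau)=q^{c^2/12}(q^{2c^2};q^{2c^2})_\infty$ and collecting the $q$-powers should give exactly $q^{-1/3}S(r,c;\tau)$. The exponent bookkeeping $-\frac{(6r+2-c)(6r+2-5c)}{12}+\frac{c(6r+2-c)}{2}\cdot(\ldots)-\frac{c^2}{12}$ has to match $c(6r+2-c)-3r^2-2r-\frac13$; this is the routine check.

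On the non-holomorphic side, the completion term of $\widehat{A_1}$ (with $\ell=1$, $k=0$) is $\frac{i}{2}\vartheta(v;6c^2\tau)R(u-v;6c^2\tau)$. Here $\vartheta(2c^2\tau;6c^2\tau)$ is, by the product formula \eqref{theta}, a root of unity times $q^{\text{power}}(q^{2c^2};q^{2c^2})_\infty$, i.e.\ essentially $\eta(2c^2\tau)$. The argument $u-v=c(6r+2-3c)\tau-\frac{c-1}{2}$ is exactly the one appearing in $\widetilde{\mathcal S}$. Checking the $q$-power $-\frac{(3c-2-6r)^2}{12}$ confirms the identity including the completion, with the same constant $\zeta$ (possibly using $R(-u)=R(u)$).

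Given the identity, \eqref{dago} and \eqref{treta} applied to $\eta(2c^2B\tau)=\eta({}^{2c^2}B\cdot 2c^2\tau)=\nu({}^{2c^2}B)\sqrt{\gamma\tau+\delta}\,\eta(2c^2\tau)$ give
\begin{align*}
\widetilde{\mathcal{S}}(r,c;B\tau)=\frac{(\gamma\tau+\delta)\nu^2(B)\nu({}^{2c^2}B)}{\nu^5({}^2B)\,\nu({}^{2c^2}B)\sqrt{\gamma\tau+\delta}}\,\widetilde{\mathcal{S}}(r,c;\tau).
\end{align*}
The factors $\nu({}^{2c^2}B)$ cancel, leaving $\sqrt{\gamma\tau+\delta}\,\nu^2(B)/\nu^5({}^2B)$ as claimed.

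The main obstacle is the first step: getting the constant $\zeta$ and the fractional $q$-exponents to agree simultaneously for the holomorphic and completion parts. The signs $i^{1-c}$ and $(-1)^{c-1}$ coming from $e^{\pi i u}$ and from $\vartheta$ must be tracked carefully. If a mismatch appears, I would first try adjusting by a shift of $u$ via \eqref{appel}, which changes only such constants.
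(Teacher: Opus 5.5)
Your proposal is correct and is the paper's argument: the paper identifies $\widetilde{\mathcal{S}}(r,c;\tau)=\widehat{G}(r,c;\tau)/\eta(2c^2\tau)$, and then combines \eqref{dago} with \eqref{treta} so that $\nu({}^{2c^2}B)$ cancels; your constant $\zeta$ is in fact $1$, because $\vartheta(2c^2\tau;6c^2\tau)=-iq^{-c^2/3}\eta(2c^2\tau)$ and both the holomorphic and completion exponents match exactly. You are also right that the right-hand side of \eqref{trs} should be evaluated at $\tau$ rather than $B\tau$.
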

	\begin{proof}
		By \eqref{defda}, we have
		\begin{align*}
			\frac{\widehat{	G}(r,c;\tau)}{\eta(2c^2\tau)}&=	\frac{q^{-\frac{(6r+2-c)(6r+2-5c)}{12}}
			}{\eta(2c^2\tau)}	\widehat{A_1}\left(c(6r+2-c)\tau-\frac{c-1}{2},2c^2\tau;6c^2\tau\right)\no\\
			&=\frac{q^{-\frac{(6r+2-c)(6r+2-5c)}{12}}
			}{\eta(2c^2\tau)}A_1\left(c(6r+2-c)\tau-\frac{c-1}{2},2c^2\tau;6c^2\tau\right)\no\\
			&\quad+\frac{iq^{-\frac{(6r+2-c)(6r+2-5c)}{12}}
				\vartheta(2c^2\tau ; 6c^2 \tau)	}{2\eta(2c^2\tau)} R\left(c (6r+2-3c)\tau-\frac{c-1}{2} ;6c^2 \tau\right)\no\\&=
		q^{-\frac{1}{3}}	S(r, c ; \tau)+\frac{1}{2}q^{-\frac{(3c-2-6r)^2}{12}}  R\left(c (6r+2-3c)\tau-\frac{c-1}{2} ;6c^2 \tau\right)
			\\&=	\widetilde{\mathcal{S}}(r, c ; \tau),
		\end{align*}
		which together with \eqref{dago} gives \eqref{trs}.
	\end{proof}

	Next, we study transformation properties of $\widetilde{\mathcal{S}}(r, c ; \tau)$ under $\mathrm{SL}_{2}(\mathbb{Z})$. For integers $\alpha$ and $\gamma$ with $\gcd(\alpha,\gamma)=1$,  we set $\tilde{g}=\gcd(\gamma,6c^2)$, $\tilde{m}=\frac{6c^2\alpha}{\tilde{g}}$, and $\tilde{n}=\frac{\gamma}{\tilde{g}}$. Take $\tilde{L}=(\begin{smallmatrix}\tilde{m}&\tilde{s}\\ \tilde{n}&\tilde{t}\end{smallmatrix})\in\mathrm{SL}_{2}(\mathbb{Z})$. Set
	\[
	\tilde{C}=\tilde{L}^{-1}\left(\begin{matrix}6c^2\alpha&6c^2\beta\\ \gamma&\delta\end{matrix}\right)=\left(\begin{matrix}\tilde{g}&6c^2\beta \tilde{t}-\delta \tilde{s}\\ 0&6c^2/\tilde{g}\end{matrix}\right).
	\]
	\begin{proposition}\label{prosc}
	We have
	\begin{align}
		&(\gamma\tau+\delta)^{-\frac{1}{2}}\widetilde{\mathcal{S}}(r, c ; B\tau)
		\no	\\&=\zeta^*\sqrt{\frac{\tilde{g}}{6c^2}}
		\exp\left(\pi i C\tau\left(\frac{\tilde{n}\tilde{m}(6r+2-3c)(c-1)}{6c}-\frac{\tilde{m}^2(6r+2-3c)^2}{36c^2}-\frac{\tilde{n}^2(c-1)^2}{4}\right)\right)
		\no	\\&\quad\times\widehat{\mu}\left(\left(\frac{\tilde{m}(6r+2-c)}{6c}-\frac{(c-1)\tilde{n}}{2}\right)C\tau+\frac{\tilde{s}(6r+2-c)}{6c}-\frac{(c-1)\tilde{t}}{2},\frac{\tilde{m}}{3}C\tau+\frac{\tilde{s}}{3};C\tau\right)
		.\label{trcs}
	\end{align}	
	\end{proposition}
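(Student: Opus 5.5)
The plan is to mimic the proof of Proposition \ref{prorc}. The first step is to rewrite $\widetilde{\mathcal{S}}(r,c;\tau)$ as a single completed $\mu$-function evaluated at argument $6c^2\tau$. From the proof of \eqref{trs} we have $\widetilde{\mathcal{S}}(r,c;\tau)=\widehat{G}(r,c;\tau)/\eta(2c^2\tau)$, where $\widehat G$ is built from $\widehat{A_1}$. For $\ell=1$, the definitions \eqref{defapp} and \eqref{cnu} give
\[
\widehat{A_1}(u,v;\tau)=\vartheta(v;\tau)\,e^{\pi i(u-v)}\cdot\widehat{\mu}(u,v;\tau)\cdot(\text{simple factor}),
\]
since $A_1(u,v;\tau)=e^{\pi i u}\vartheta(v;\tau)\mu(u,v;\tau)e^{-\pi i u}$ up to the normalisation in \eqref{defapp}. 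The $R$-term of $\widehat{A_1}$ is exactly $\frac{i}{2}\vartheta(v;\tau)R(u-v;\tau)$.

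With $v=2c^2\tau$ and argument $6c^2\tau$, the product formula \eqref{theta} shows that $\vartheta(2c^2\tau;6c^2\tau)$ is an eta-quotient in $q^{2c^2}$. Its ratio with $\eta(2c^2\tau)$ should collapse, via $J_{2c^2,6c^2}=(q^{2c^2};q^{2c^2})_\infty$, to a power of $q$ times a constant. Hence I expect
\[
\widetilde{\mathcal{S}}(r,c;\tau)=\zeta^*\,q^{\lambda}\,\widehat{\mu}\!\left(\frac{6r+2-c}{6c}\,(6c^2\tau)-\frac{c-1}{2},\ \frac{1}{3}(6c^2\tau);\ 6c^2\tau\right)
\]
for an explicit rational exponent $\lambda$. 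Writing $c(6r+2-c)\tau=\frac{6r+2-c}{6c}\cdot 6c^2\tau$ puts the elliptic variables in the form "rational multiple of the modular variable plus a constant". The exponent $\lambda$ should be $-\frac{(6r+2-3c)^2}{12}\cdot\frac{1}{\,\cdot\,}$ adjusted so that $q^{\lambda}=e^{-\pi i(6c^2\tau)(\ldots)}$. I will compute it by matching leading terms, or by comparing the $R$-parts with the definition of $\widetilde{\mathcal{S}}$.

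Next, replace $\tau$ by $B\tau$ and use $6c^2B\tau=\tilde L\tilde C\tau$ with $j(B,\tau)=j(\tilde L,\tilde C\tau)\cdot 6c^2/\tilde g$, exactly as in Proposition \ref{prorc}. Write each elliptic variable over the common denominator $j(\tilde L,\tilde C\tau)=\tilde n\tilde C\tau+\tilde t$, using $\tilde L\tilde C\tau\cdot j=\tilde m\tilde C\tau+\tilde s$. Then
\[
\frac{6r+2-c}{6c}\tilde L\tilde C\tau-\frac{c-1}{2}=\frac{\left(\frac{\tilde m(6r+2-c)}{6c}-\frac{(c-1)\tilde n}{2}\right)\tilde C\tau+\frac{\tilde s(6r+2-c)}{6c}-\frac{(c-1)\tilde t}{2}}{\tilde n\tilde C\tau+\tilde t},
\]
and similarly the second variable becomes $(\frac{\tilde m}{3}\tilde C\tau+\frac{\tilde s}{3})/j$. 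Applying \eqref{trmu} with the matrix $\tilde L$ produces three things: the factor $\nu(\tilde L)^{-3}\sqrt{j(\tilde L,\tilde C\tau)}=\zeta^*\sqrt{(\gamma\tau+\delta)\tilde g/(6c^2)}$, the $\widehat\mu$ appearing in \eqref{trcs}, and the Gaussian factor $\exp(-\pi i\tilde n(U-V)^2/(\tilde n\tilde C\tau+\tilde t))$. Here $U-V$ is the difference of the numerators, namely $\frac{(6r+2-3c)}{6c}(\tilde m\tilde C\tau+\tilde s)-\frac{c-1}{2}(\tilde n\tilde C\tau+\tilde t)$.

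The main obstacle is the bookkeeping in this last step. One must expand the Gaussian, together with the prefactor $q^{\lambda}$ evaluated at $B\tau$, i.e.\ $e^{2\pi i\lambda\tilde L\tilde C\tau/(6c^2)}$, and show that all terms with the non-polynomial denominator $\tilde n\tilde C\tau+\tilde t$ cancel. This is done via the identity $\tilde n(\tilde m\tilde C\tau+\tilde s)=\tilde m(\tilde n\tilde C\tau+\tilde t)-1$, the analogue of \eqref{alde}. After the cancellation only terms linear in $\tilde C\tau$ survive, namely
\[
\pi i\tilde C\tau\left(\frac{\tilde n\tilde m(6r+2-3c)(c-1)}{6c}-\frac{\tilde m^2(6r+2-3c)^2}{36c^2}-\frac{\tilde n^2(c-1)^2}{4}\right),
\]
together with constants, which I absorb into $\zeta^*$. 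I would first check the cancellation of the $1/j$ terms, since it pins down $\lambda$. It is the same mechanism as the cancellation of $e^{\pi i LC\tau/36}$ in the proof of Proposition \ref{prorc}, where $-\frac{\pi i k B\tau}{6}$ was rewritten as $-\frac{\pi i LC\tau}{36}$. Dividing by $(\gamma\tau+\delta)^{1/2}$ then yields \eqref{trcs}.
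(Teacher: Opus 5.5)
Your proposal is correct and follows the paper's route. The paper rewrites $\widetilde{\mathcal{S}}(r,c;\tau)=-i\,q^{-\frac{(3c-2-6r)^2}{12}}\widehat{\mu}\bigl(c(6r+2-c)\tau-\frac{c-1}{2},\,2c^2\tau;\,6c^2\tau\bigr)$ and then applies \eqref{trmu} with $\tilde L$ as in Proposition \ref{prorc}; your cancellation of the $1/(\tilde n\tilde C\tau+\tilde t)$ term against $q^{\lambda}$ at $B\tau$, via $\tilde n(\tilde m\tilde C\tau+\tilde s)=\tilde m(\tilde n\tilde C\tau+\tilde t)-1$, supplies exactly the bookkeeping the paper omits.

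One small correction: there is no extra factor $e^{\pi i(u-v)}$, since $\widehat{A_1}(u,v;\tau)=\vartheta(v;\tau)\widehat{\mu}(u,v;\tau)$ exactly, and with $\vartheta(2c^2\tau;6c^2\tau)=-i\,q^{-c^2/3}\eta(2c^2\tau)$ this gives the constant $-i$ and $\lambda=-\frac{(6r+2-3c)^2}{12}$ directly, so the $1/j$-cancellation is a consistency check rather than what determines $\lambda$.
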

	\begin{proof}
	Note that
	\begin{align*}
		\widetilde{\mathcal{S}}(r, c ; \tau)&=q^{-\frac{1}{3}}S(r, c ; \tau)+\frac{1}{2}q^{-\frac{(3c-2-6r)^2}{12}}  R\left(c (6r+2-3c)\tau-\frac{c-1}{2} ;6c^2 \tau\right) 
		\\&=\frac{i^{1-c}q^{c(6r+2-c)-\frac{(1+3r)^2 }{3} }}{\left( q^{2 c^2} ; q^{2 c^2}\right)_{\infty}} \sum_{n=-\infty}^{\infty} \frac{(-1)^{n } q^{3 c^2 n^2+5 c^2 n}}{1+(-1)^cq^{6 c^2 n+(2+6r-c) c}}
		\\&\quad	+\frac{1}{2}q^{-\frac{(3c-2-6r)^2}{12}}  R\left(c (6r+2-3c)\tau-\frac{c-1}{2} ;6c^2 \tau\right)
		\\&=-i q^{-\frac{(3c-2-6r)^2}{12}}\widehat{\mu}\left(c(6r+2-c)\tau-\frac{c-1}{2},2c^2\tau;6c^2\tau\right).
	\end{align*}
	Proceeding as in the proof of \eqref{rtrc}, one can apply \eqref{trmu} to obtain \eqref{trcs}.
	\end{proof}
\subsection{Proof of Theorem \ref{thm1}}	
Applying Proposition \ref{pro1514} with $k=18$ and $B=\left(\begin{array}{ll}
	\alpha & \beta \\
	\gamma &\delta
\end{array}\right)\in\Gamma_0(864)\cap\Gamma_0(72c^2)\cap\Gamma_1(8)\cap\Gamma_1(4c)$ and simplifying gives
	\begin{align}
	\widehat{\mathscr{R}^0}(a,c ; 18B\tau)&=\frac{\sqrt{\gamma\tau+\delta}}{\nu\left({}^{36}B\right)}	(-1)^{\beta}
		\widehat{\mathscr{R}^0}(a,c ; 18\tau).\no
\end{align}
A straightforward calculation gives 
	\begin{align}
\nu\left({}^{36}B\right)=\left(\frac{\gamma}{\delta}\right) 	(-1)^{\beta}\no.
\end{align}
Thus $	\widehat{\mathscr{R}^0}(a,c ; 18\tau)$ transforms correctly under $\Gamma_0(72c^2)\cap\Gamma_1(8)\cap\Gamma_1(4c)$.

Next, we show that $\widehat{\mathscr{R}^0}(a,c ; 18\tau)$ is annihilated by $\Delta_{\frac{1}{2}}$. By \eqref{cnu} and \eqref{ru1}, we have
	\begin{align}
	&	\widehat{\mathscr{R}^0}(a,c ;18 \tau)\no\\
	&=-i q^{-\frac{3}{2}}\zeta_{2c}^a\widehat{	\mu}\left(54\tau-\frac{a}{c}, \frac{2a}{c}+36\tau ; 108\tau\right)
\no\\&\quad	-i q^{-\frac{3}{2}}\zeta_{2c}^{-a}\widehat{	\mu}\left(18\tau-\frac{a}{c}, \frac{2a}{c}+36\tau ; 108\tau\right)\no\\
	&=-i q^{-\frac{3}{2}}\zeta_{2c}^a\mu\left(54\tau-\frac{a}{c}, \frac{2a}{c}+36\tau ; 108\tau\right)
\no\\&\quad	-i q^{-\frac{3}{2}}\zeta_{2c}^{-a}	\mu\left(18\tau-\frac{a}{c}, \frac{2a}{c}+36\tau ; 108\tau\right)\no\\
&\quad+\frac{q^{-\frac{3}{2}}\zeta_{2c}^a}{2}R\left(18\tau-\frac{3a}{c};108\tau\right)+\frac{q^{-\frac{3}{2}}\zeta_{2c}^{-a}}{2}R\left(-18\tau-\frac{3a}{c};108\tau\right).
\end{align}
Noting that
\[
\frac{\partial}{\partial \tau} = \frac{1}{2} \left( \frac{\partial}{\partial x} - i \frac{\partial}{\partial y} \right)
\quad \text{and} \quad
\frac{\partial}{\partial \overline{\tau}} = \frac{1}{2} \left( \frac{\partial}{\partial x} + i \frac{\partial}{\partial y} \right),
\]
we have
\[
\Delta_{\frac{1}{2}}= -4y^{\frac{3}{2}} \frac{\partial}{\partial \tau} y^{\frac{1}{2}} \frac{\partial}{\partial \overline{\tau}}.
\]
Since $-i q^{-\frac{3}{2}}\zeta_{2c}^a\mu\left(54\tau-\frac{a}{c}, \frac{2a}{c}+36\tau ; 108\tau\right)
	-i q^{-\frac{3}{2}}\zeta_{2c}^{-a}	\mu\left(18\tau-\frac{a}{c}, \frac{2a}{c}+36\tau ; 108\tau\right)$ is holomorphic in $\tau$, 
it suffices to show that $$\frac{q^{-\frac{3}{2}}\zeta_{2c}^a}{2}R\left(18\tau-\frac{3a}{c};108\tau\right)+\frac{q^{-\frac{3}{2}}\zeta_{2c}^{-a}}{2}R\left(-18\tau-\frac{3a}{c};108\tau\right)$$
is annihilated by $\Delta_{\frac{1}{2}}$. Applying \cite[Lemma 1.8]{mock}, after some direct calculations, we find that
\begin{align*}
&y^{\frac{1}{2}} \frac{\partial}{\partial \overline{\tau}	}
\left(\frac{q^{-\frac{3}{2}}\zeta_{2c}^a}{2}R\left(18\tau-\frac{3a}{c};108\tau\right)+\frac{q^{-\frac{3}{2}}\zeta_{2c}^{-a}}{2}R\left(-18\tau-\frac{3a}{c};108\tau\right)\right)
\no\\
&=\frac{-i\sqrt{54}\zeta_{2c}^a}{2} e^{-3\pi i \overline{\tau}}
\sum_{\nu\in\frac{1}{2}+\mathbb{Z}} (-1)^{\nu-\frac{1}{2}}\left(\nu+\frac{1}{6}\right)e^{-108\pi i \nu^2 \overline{\tau}-2\pi i\nu\left(18\overline{\tau}-\frac{3a}{c}\right)}
\no\\&\quad
-\frac{i\sqrt{54}\zeta_{2c}^{-a}}{2} e^{-3\pi i \overline{\tau}}
\sum_{\nu\in\frac{1}{2}+\mathbb{Z}} (-1)^{\nu-\frac{1}{2}}\left(\nu-\frac{1}{6}\right)e^{-108\pi i \nu^2 \overline{\tau}+2\pi i\nu\left(18\overline{\tau}+\frac{3a}{c}\right)},
\end{align*}
which is holomorphic in $\overline{\tau}$. Thus it is annihilated by $\frac{\partial}{\partial \tau}$ and the results follows.

Lastly, we check the growth conditions of $\widehat{\mathscr{R}^0}(a,c ; 18\tau)$ at the cusps. Suppose $\alpha$ and $\gamma$ are integers with $\gcd(\alpha,\gamma)=1$ and take $B=\left(\begin{smallmatrix}\alpha&\beta\\ \gamma&\delta\end{smallmatrix}\right)\in\mathrm{SL}_{2}(\mathbb{Z})$. Set $g=\gcd(\gamma,108)$, $m=\frac{108\alpha}{g}$, and $n=\frac{\gamma}{g}$. Since $\gcd(m,n)=1$, we take $L=(\begin{smallmatrix}m&s\\ n&t\end{smallmatrix})\in\mathrm{SL}_{2}(\mathbb{Z})$. Set
\[
C=L^{-1}\left(\begin{matrix}108\alpha&108\beta\\ \gamma&\delta\end{matrix}\right)=\left(\begin{matrix}g&108\beta t-\delta s\\ 0&108/g\end{matrix}\right).
\]
Then Proposition \ref{prorc} gives
	\begin{align}
	&(\gamma\tau+\delta)^{-\frac{1}{2}}	\widehat{\mathscr{R}^0}(a,c ; 18B\tau)\no
	\\&=\zeta^*\sqrt{\frac{g}{108}}
	\exp\left(\pi i C\tau\left(\frac{anm}{c}-\frac{m^2}{36}-\frac{9a^2n^2}{c^2}\right)\right)
	\no	\\&\quad\times\widehat{\mu}\left(\left(\frac{m}{2}-\frac{an}{c}\right)C\tau+\frac{s}{2}-\frac{at}{c},\left(\frac{m}{3}+\frac{2an}{c}\right)C\tau+\frac{s}{3}+\frac{2at}{c};C\tau\right)\no
	\\&\quad+\zeta^*\sqrt{\frac{g}{108}}
	\exp\left(-\pi i C\tau\left(\frac{anm}{c}+\frac{m^2}{36}+\frac{9a^2n^2}{c^2}\right)\right)
	\no	\\&\quad\times\widehat{\mu}\left(\left(\frac{m}{6}-\frac{an}{c}\right)C\tau+\frac{s}{6}-\frac{at}{c},\left(\frac{m}{3}+\frac{2an}{c}\right)C\tau+\frac{s}{3}+\frac{2at}{c};C\tau\right),\no
\end{align} 
where $\zeta^*$ is some root of unity. Noting that $\exp\left(\pi i C\tau\right)=\zeta^* q^{\frac{g^2}{216}}$, after applying \eqref{gth} and \eqref{cnu}, we find that
$(\gamma\tau+\delta)^{-\frac{1}{2}}	\widehat{\mathscr{R}^0}(a,c ; 18B\tau)$ has at worst linear exponential growth as $y\rightarrow \infty.$
This completes the proof of Theorem \ref{thm1}.

Proceeding as above, we can apply Propositions \ref{pro1514}, \ref{prorc} and \eqref{etaj} to obtain
\begin{theorem}
	For positive integers $0<a<c$, $	\widehat{\mathscr{R}^0}(a,c ; \tau)$ is a weak Maass form of weight $1/2$ on $\Gamma_0(2c)\cap\Gamma_1(c)$ with mutiplier $$\frac{\nu^2\left(B\right)}{\nu^5\left({}^{2}B\right)}	(-1)^{\frac{a\gamma}{2c}+\frac{a^2\gamma(\delta-1)}{2c^2}}
	e^{\frac{3\gamma a^2\pi i}{2c^2}}		$$
	for $B=\left(\begin{array}{ll}
		\alpha & \beta \\
		\gamma &\delta
	\end{array}\right).$ In particular, if $B=\left(\begin{array}{ll}
		\alpha & \beta \\
		\gamma &\delta
	\end{array}\right)\in\Gamma_0(4c^2)\cap\Gamma_1(c)$, then
	\begin{align}
		\widehat{\mathscr{R}^0}(a,c ; B\tau)=\frac{\nu^2\left(B\right)}{\nu^5\left({}^{2}B\right)}	\sqrt{\gamma\tau+\delta}\widehat{\mathscr{R}^0}(a,c ; \tau)\label{8291116}
	\end{align}
\end{theorem}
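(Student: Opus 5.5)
The plan mirrors the proof of Theorem \ref{thm1}, with $k=1$ in place of $k=18$. There are three things to check: the transformation law with the stated multiplier, annihilation by $\Delta_{\frac12}$, and growth at the cusps.

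\emph{Transformation law.} Take $B\in\Gamma_0(2c)\cap\Gamma_1(c)$ and apply Proposition \ref{pro1514} with $k=1$. This gives
\begin{align*}
\widehat{\mathscr{R}^0}(a,c ; B\tau)=\frac{\sqrt{\gamma\tau+\delta}}{\nu\left({}^{2}B\right)}(-1)^{\frac{\alpha-1}{2}(1+\beta)+\frac{a\gamma}{2c}+\frac{a^2\gamma(\delta-1)}{2c^2}}e^{-\frac{\pi i\beta}{2}}e^{\frac{3\pi i\gamma a^2}{2c^2}}\widehat{\mathscr{R}^0}(a,c ; \tau).
\end{align*}
Next, \eqref{etaj} replaces the factor $(-1)^{\frac{\alpha-1}{2}(1+\beta)}e^{-\frac{\pi i\beta}{2}}$ by $\nu^2(B)/\nu^4({}^2B)$. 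Combining this with the $1/\nu({}^2B)$ already present yields exactly the claimed multiplier $\frac{\nu^2(B)}{\nu^5({}^2B)}(-1)^{\frac{a\gamma}{2c}+\frac{a^2\gamma(\delta-1)}{2c^2}}e^{\frac{3\gamma a^2\pi i}{2c^2}}$.

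For the special case $B\in\Gamma_0(4c^2)\cap\Gamma_1(c)$, write $\gamma=4c^2\gamma'$. Then $\frac{a\gamma}{2c}=2ac\gamma'$ is even. Since $\delta\equiv1\pmod c$, the exponent $\frac{a^2\gamma(\delta-1)}{2c^2}=2a^2\gamma'(\delta-1)$ is also even. Finally $e^{\frac{3\gamma a^2\pi i}{2c^2}}=e^{6\pi i a^2\gamma'}=1$. This gives \eqref{8291116}.

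There is one subtlety here. $\Gamma_0(2c)\cap\Gamma_1(c)$ forces $\delta$, and hence $\alpha$, to be odd only when $c$ is even. Lemma \eqref{etaj} and Proposition \ref{pro1514} (through $\frac{\alpha-1}{2}$) assume $\alpha$ odd. I would handle $c$ odd by noting that $\gamma$ even and $\alpha\delta\equiv1\pmod 2$ still force $\alpha$ and $\delta$ odd. So the hypotheses hold in all cases, and the exponents are integers once $\gamma\equiv0\pmod{2c}$. I expect this parity and integrality bookkeeping to be the main obstacle. It includes checking that $(-1)^{\frac{a\gamma}{2c}+\frac{a^2\gamma(\delta-1)}{2c^2}}$ is well defined, which uses $c\mid\delta-1$ and $2c\mid\gamma$.

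\emph{Harmonicity and growth.} The function $\widehat{\mathscr{R}^0}(a,c;\tau)$ is the $\tau\mapsto\tau/18$ rescaling of the function already treated. Equivalently, I would repeat the computation with \cite[Lemma 1.8]{mock}: by \eqref{ru1} it is a holomorphic part plus $\frac{q^{-1/12}}{2}\bigl(\zeta_{2c}^{-a}R(\tau+\frac{3a}{c};6\tau)+\zeta_{2c}^{a}R(\frac{3a}{c}-\tau;6\tau)\bigr)$. Applying $y^{1/2}\partial_{\bar\tau}$ to this gives an antiholomorphic theta series, so $\Delta_{\frac12}$ kills it. For the cusps, I would use Proposition \ref{prorc} with $k=1$. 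It expresses $(\gamma\tau+\delta)^{-1/2}\widehat{\mathscr{R}^0}(a,c;B\tau)$ for any $B\in\mathrm{SL}_2(\mathbb{Z})$ through $\widehat\mu$ at arguments of the form $(\text{rational})\,C\tau+\text{rational}$. Then \eqref{cnu} and \eqref{gth} give at most linear exponential growth as $y\to\infty$, exactly as in the proof of Theorem \ref{thm1}.
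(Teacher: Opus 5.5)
Your proposal is correct and follows the paper's own argument. Proposition \ref{pro1514} with $k=1$, combined with \eqref{etaj}, gives the multiplier $\nu^2(B)/\nu^5({}^2B)$ times the stated character, which becomes trivial on $\Gamma_0(4c^2)\cap\Gamma_1(c)$ exactly as you check; harmonicity and growth at the cusps are handled as in the proof of Theorem \ref{thm1}, via \cite[Lemma 1.8]{mock} and Proposition \ref{prorc} with $k=1$. The parity subtlety you flag is a non-issue: $2c\mid\gamma$ alone forces $\alpha$ and $\delta$ to be odd, as you yourself note.
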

%

\subsection{Proof of Theorem \ref{thm2}}
We consider part (i) of Theorem \ref{thm2}. By \eqref{holo}, we have
\begin{align}
&	\frac{\eta^5\left(2\tau\right)}{\eta^2(\tau)}\left(	\mathscr{R}^0(a,c; \tau)-i^{1-c}q^{-\frac{1}{3}}\sum_{r=0}^{c-1}(-1)^r\left(\zeta_c^{(3r+1)a}+\zeta_c^{-(3r+1)a}\right)S(r, c ; \tau)\right)\no	\\&=	\frac{\eta^5\left(2\tau\right)}{\eta^2(\tau)}\left(	\widehat{\mathscr{R}^0}(a,c; \tau)-i^{1-c}\sum_{r=0}^{c-1}(-1)^r\left(\zeta_c^{(3r+1)a}+\zeta_c^{-(3r+1)a}\right) \widetilde{\mathcal{S}}(r, c ; \tau)\right)\label{8291118}
\end{align}
	For $B\in\Gamma_0\left(2\right),$ 	we have
\begin{align}
\frac{\eta^5\left(2B\tau\right)}{\eta^2(B\tau)}
	=(\gamma\tau+\delta)^{\frac{3}{2}}\frac{\nu^5\left({}^{2}B\right)}{\nu^2\left(B\right)}\frac{\eta^5\left(2\tau\right)}{\eta^2(\tau)}. \no
\end{align}
This together with \eqref{trs}, \eqref{8291116} and \eqref{8291118} implies that, the holomorphic function
\begin{align}
	\frac{\eta^5\left(2\tau\right)}{\eta^2(\tau)}\left(	\widehat{\mathscr{R}^0}(a,c; \tau)-i^{1-c}\sum_{r=0}^{c-1}(-1)^r\left(\zeta_c^{(3r+1)a}+\zeta_c^{-(3r+1)a}\right) \widetilde{\mathcal{S}}(r, c ; \tau)\right)\no
\end{align}
transforms correctly under $\Gamma_0\left([\gcd(c,2)]^212c^2\right)\cap\Gamma_1(6c).$ Then the result follows.

Similarly, we apply \eqref{holo} to find that
\begin{align}
M(\tau):&=			\frac{\eta^5\left(2c^2\tau\right)}{\eta^2(c^2\tau)}\left(	\mathscr{R}^0(a,c; \tau)-i^{1-c}q^{-\frac{1}{3}}\sum_{r=0}^{c-1}(-1)^r\left(\zeta_c^{(3r+1)a}+\zeta_c^{-(3r+1)a}\right)S(r, c ; \tau)\right)\no	\\&=		\frac{\eta^5\left(2c^2\tau\right)}{\eta^2(c^2\tau)}\left(	\widehat{\mathscr{R}^0}(a,c; \tau)-i^{1-c}\sum_{r=0}^{c-1}(-1)^r\left(\zeta_c^{(3r+1)a}+\zeta_c^{-(3r+1)a}\right) \widetilde{\mathcal{S}}(r, c ; \tau)\right)\no.
\end{align}
Invoke \eqref{treta}, \eqref{trs} and \eqref{8291116} to obtain
\begin{align}
(\gamma\tau+\delta)^{-2}	M(B\tau)&=\frac{\nu^5\left({}^{2c^2}B\right)\nu^2\left(B\right)}{\nu^2\left({}^{c^2}B\right)\nu^5\left({}^{2}B\right)}M(\tau)	\label{trm}		
\end{align}
for $B\in\Gamma_0\left([\gcd(c,2)]^212c^2\right)\cap\Gamma_1(6c)$. Applying \eqref{treta} yields 
	\begin{align}
	\frac{\nu^2\left(B\right)}{\nu^2\left({}^{c^2}B\right)}=	e^{\frac{\pi i\beta(1-c^2)}{6}} \no
\end{align}
which together with \eqref{nv2202} and \eqref{trm} gives
\begin{align}
	(\gamma\tau+\delta)^{-2}	M(B\tau)&=	e^{\frac{2\pi i\beta(c^2-1)}{3}} M(\tau)=M(\tau)\no	
\end{align}
since $3\nmid c$. Then the result follows. This completes the proof of Theorem \ref{thm2}.

\section{Proof of Theorem \ref{thm3} and Corollary \ref{coro}}\label{s4}
We prove Theorem \ref{thm3} by applying the valence formula.

To study the cusp conditions, we need \cite[Corollary 6.2]{cjs1}.
For a real number $w$, we let $\lfloor w \rfloor$ denote the greatest integer less than or equal to $w$ and $\{w\}$ the fractional part of $w$. That is, $w = \lfloor w \rfloor + \{w\}$, $\lfloor w \rfloor \in \mathbb{Z}$, and $0 \leq \{w\} < 1$.
\begin{proposition}\label{procuu}
	Let $u_1, u_2, w_1, w_2$ be real numbers. Then the lowest power of $q$ appearing in the expansion of the holomorphic part of $q^\alpha \widehat{\mu}\left(u_1\tau+u_2,w_1\tau+w_2;\tau\right)$ is at least $\alpha+\tilde{\nu}(u_1,w_1)$, where
	\begin{align*}
		\tilde{\nu}(u,w) &= \frac{1}{2} \left( \lfloor u \rfloor - \lfloor w \rfloor \right)^2 + \left( \lfloor u \rfloor - \lfloor w \rfloor \right) \left( \{ u \} - \{ w \} \right) + k(u,w),
		\\
		k(u,w)& = 
		\begin{cases}
			\nu(\{ u \}, \{ w \}) & \text{if } \{ u \} - \{ w \} \neq \pm \frac{1}{2}, \\
			\min \left( \frac{1}{8}, \nu(\{ u \}, \{ w \}) \right) & \text{if } \{ u \} - \{ w \} = \pm \frac{1}{2},
		\end{cases}
		\\
		\nu(u,w) &=
		\begin{cases}
			\frac{u + w}{2} - \frac{1}{8} & \text{if } u + w \leq 1, \\
			\frac{7}{8} - \frac{u + w}{2} & \text{if } u + w > 1.
		\end{cases}
	\end{align*}	
\end{proposition}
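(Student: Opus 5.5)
The plan is to reduce to the case $0\le u_1,w_1<1$ with the elliptic transformation law of $\widehat{\mu}$. In that range, read off the leading exponent directly from the definition \eqref{defapp} of $\mu$ together with the explicit form of $R$. Here ``holomorphic part'' means $\mu$ plus the $q$-series terms that $\frac{i}{2}R$ contributes, namely the function $p(\tau)$ in \eqref{gth}. The factor $q^\alpha$ only shifts every exponent by $\alpha$, so it suffices to take $\alpha=0$.

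\textbf{Step 1 (reduction to fractional parts).} Write $u_1=\lfloor u_1\rfloor+\{u_1\}$, $w_1=\lfloor w_1\rfloor+\{w_1\}$. Apply the elliptic transformation of $\widehat{\mu}$ recalled after \eqref{appel}, with $n_1=\lfloor u_1\rfloor$, $n_2=\lfloor w_1\rfloor$, $m_1=m_2=0$. This gives
\[
\widehat{\mu}(u_1\tau+u_2,w_1\tau+w_2;\tau)=\pm\, q^{\frac12(n_1-n_2)^2+(n_1-n_2)(\{u_1\}-\{w_1\})}\,e^{2\pi i(n_1-n_2)(u_2-w_2)}\,\widehat{\mu}(\{u_1\}\tau+u_2,\{w_1\}\tau+w_2;\tau).
\]
The prefactor is a holomorphic monomial in $q$ times a constant. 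Hence the holomorphic part of the left side is that prefactor times the holomorphic part of the right side. This produces the first two terms of $\tilde\nu$, and it remains to show that for $0\le u,w<1$ the holomorphic part of $\widehat{\mu}(u\tau+u_2,w\tau+w_2;\tau)$ has order at least $k(u,w)$.

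\textbf{Step 2 (the holomorphic $\mu$ in the reduced range).} From \eqref{theta}, $\vartheta(w\tau+w_2;\tau)=\zeta^*q^{1/8-w/2}(1+O(q^{\epsilon}))$ for $0< w<1$, and $e^{\pi i(u\tau+u_2)}=\zeta^*q^{u/2}$. Next examine the sum $\sum_n(-1)^nq^{n(n+1)/2+nw}e^{2\pi i n w_2}/(1-q^{n+u}e^{2\pi iu_2})$ term by term. The $n=0$ term has order $0$. The $n=-1$ term has order $(1-u-w)$ when expanded in powers of $q$, since $1/(1-q^{u-1}x)=-q^{1-u}x^{-1}/(1-q^{1-u}x^{-1})$. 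All other $n$ give strictly larger orders. Therefore $\mu$ has order at least $\frac{u+w}{2}-\frac18+\min(0,1-u-w)=\nu(u,w)$. The boundary values $u=0$ or $w=0$ need the usual care, namely a pole of $1/\vartheta$ cancelled by the sum; I expect to treat them by continuity or by reading off directly.

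\textbf{Step 3 (the $q$-series coming from $R$).} This is the main obstacle. In $R(u-v;\tau)$ we have $\operatorname{Im}(u-v)/\operatorname{Im}\tau=u-w\in(-1,1)$. Using $E(x)=\operatorname{sgn}(x)(1-\beta(x^2))$, the only non-decaying, i.e.\ genuinely $q$-series, contributions come from those $\nu\in\frac12+\mathbb{Z}$ with $\operatorname{sgn}(\nu)\neq\operatorname{sgn}(\nu+u-w)$. That is $\nu=-\frac12$ when $u-w>\frac12$, and $\nu=\frac12$ when $u-w<-\frac12$. These terms give exponents $-\frac18\pm\frac12(u-w)$, and I will check directly that each is $\ge\nu(u,w)$. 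In the borderline case $u-w=\pm\frac12$ we have $E(0)=0$, so exactly half of a $q^{1/8}$-order term survives. This is precisely why $k(u,w)=\min(\frac18,\nu)$ there. Carrying out this case check, in the form of the bookkeeping in \cite[Corollary 6.2]{cjs1}, finishes the proof.
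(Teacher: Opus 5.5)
Your Steps 1 and 2 are sound. Step 3 breaks exactly at the inequality you defer, and it cannot be repaired, because that inequality is false.

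For reduced $0\le u,w<1$ with $u-w>\frac12$, the surviving $\nu=-\frac12$ term of $\frac{i}{2}R(u-v;\tau)$ is $i\,e^{\pi i(u_2-w_2)}q^{\frac{u-w}{2}-\frac18}$. Its exponent compares with $\nu(u,w)$ as follows:
\[
\Big(\frac{u-w}{2}-\frac18\Big)-\nu(u,w)=\begin{cases}-w, & u+w\le 1,\\ u-1, & u+w>1.\end{cases}
\]
This is negative unless $w=0$. Symmetrically, for $u-w<-\frac12$ the difference is $-u$ or $w-1$. By your Step 2, every exponent occurring in $\mu$ is at least $\nu(u,w)$, so nothing cancels this term. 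Hence whenever $|\{u_1\}-\{w_1\}|>\frac12$ and neither fractional part vanishes, the holomorphic part starts strictly below $q^{\tilde\nu(u_1,w_1)}$.

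You can confirm this without using $R$ at all. Take $(u_1,w_1)=(-\frac15,\frac1{10})$.
\begin{itemize}
\item Since $u_1-w_1=-\frac3{10}\in(-\frac12,\frac12)$, $R$ has no holomorphic part.
\item The $n=0$ term of the Appell--Lerch sum gives $\mu=-i\,e^{\pi i(w_2-u_2)}q^{1/40}\bigl(1+O(q^{1/10})\bigr)$.
\item The formula gives $\tilde\nu(-\frac15,\frac1{10})=\frac12-\frac7{10}+\nu(\frac45,\frac1{10})=\frac12-\frac7{10}+\frac{13}{40}=\frac18>\frac1{40}$.
\end{itemize}
So the proposition as displayed fails in this regime. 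The case split in $k$ should be $|\{u\}-\{w\}|<\frac12$ versus $|\{u\}-\{w\}|\ge\frac12$, not $\{u\}-\{w\}\neq\pm\frac12$ versus $\{u\}-\{w\}=\pm\frac12$.

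What your argument actually proves is the bound with the following $k$:
\begin{itemize}
\item $k(u,w)=\nu(\{u\},\{w\})$ when $|\{u\}-\{w\}|<\frac12$;
\item $k(u,w)=\frac{|\{u\}-\{w\}|}{2}-\frac18$ when $|\{u\}-\{w\}|\ge\frac12$.
\end{itemize}
At $\pm\frac12$ this equals $\frac18=\min(\frac18,\nu)$, which agrees with the display there. The paper gives no proof of its own, only the citation \cite[Corollary 6.2]{cjs1}. You should check the precise statement in that source rather than rely on the version transcribed here.

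A minor point: the boundary cases in Step 2 need no continuity argument.
\begin{itemize}
\item For $w=0$ and $w_2\notin\mathbb{Z}$, the factor $1-e^{2\pi i w_2}$ of $\vartheta$ is a nonzero constant, so $\vartheta$ still has order exactly $\frac18$.
\item For $u=0$, the $n=0$ denominator is the constant $1-e^{2\pi i u_2}$.
\item If $w_2\in\mathbb{Z}$ or $u_2\in\mathbb{Z}$ in these cases, $\mu$ has a genuine pole and the statement is vacuous.
\end{itemize}
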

Applying Propositions \ref{prorc}, \ref{prosc} and \ref{procuu}, we obtain the following immediately.
\begin{proposition}\label{ordpro}
	Let $\alpha, \gamma$ be integers with $\gcd(\alpha,\gamma)=1$.
	\begin{enumerate}[(i)]
		\item 
		The lowest power of $q$ appearing in the expansion of the holomorphic part of 	$\widehat{\mathscr{R}^0}(a,c ; \tau)$ at the cusp $\frac{\alpha}{\gamma}$ is at least 
		\begin{align}
			\frac{g^2}{6}\Bigg(&
			\min\left\{\frac{anm}{2c}+\tilde{\nu}\left(\frac{m}{2}-\frac{an}{c},\frac{m}{3}+\frac{2an}{c}\right),-\frac{anm}{2c}+\tilde{\nu}\left(\frac{m}{6}-\frac{an}{c},\frac{m}{3}+\frac{2an}{c}\right)\right\}
			\no\\&\quad
			-\frac{m^2}{72}-\frac{9a^2n^2}{2c^2}\Bigg),
		\end{align}
		where $g=\gcd(6,\gamma), m=\frac{6\alpha}{g}, n=\frac{\gamma}{g}.$
		
		\item
		The lowest power of $q$ appearing in the expansion of the holomorphic part of 	$\widetilde{\mathcal{S}}(r, c ; \tau)$ at the cusp $\frac{\alpha}{\gamma}$ is at least 
		\begin{align}
			\frac{\tilde{g}^2}{6c^2}\Bigg(&
			\frac{\tilde{n}\tilde{m}(6r+2-3c)(c-1)}{12c}-\frac{\tilde{m}^2(6r+2-3c)^2}{72c^2}-\frac{\tilde{n}^2(c-1)^2}{8}
			\no\\&\quad+\tilde{\nu}\left(\frac{\tilde{m}(6r+2-c)}{6c}-\frac{(c-1)\tilde{n}}{2},\frac{\tilde{m}}{3}\right)
			\Bigg),
		\end{align}	
		where $\tilde{g}=\gcd(\gamma,6c^2)$, $\tilde{m}=\frac{6c^2\alpha}{\tilde{g}}$, and $\tilde{n}=\frac{\gamma}{\tilde{g}}$.
	\end{enumerate}
\end{proposition}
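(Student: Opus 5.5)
We read off both bounds from the transformation formulas in Propositions \ref{prorc} and \ref{prosc}: at the cusp $\frac{\alpha}{\gamma}$ each function becomes an explicit exponential in $C\tau$ times $\widehat{\mu}$-functions whose elliptic variables are linear in $C\tau$, and Proposition \ref{procuu} then bounds the order of each $\widehat{\mu}$.

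\textbf{Part (i).} Apply Proposition \ref{prorc} with $k=1$, so $g=\gcd(\gamma,6)$, $m=6\alpha/g$, $n=\gamma/g$. Then $(\gamma\tau+\delta)^{-1/2}\widehat{\mathscr{R}^0}(a,c;B\tau)$ is a sum of two terms. Each term has the form $\zeta^*\sqrt{g/6}\,e^{\pi i C\tau X}\,\widehat{\mu}(u_1C\tau+u_2,w_1C\tau+w_2;C\tau)$, with
\[
X=\pm\frac{anm}{c}-\frac{m^2}{36}-\frac{9a^2n^2}{c^2}.
\]
Write $e^{\pi iC\tau X}=e^{2\pi i C\tau (X/2)}$. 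Proposition \ref{procuu}, applied in the variable $C\tau$, shows that the holomorphic part of each term has lowest $e^{2\pi i C\tau}$-power at least $\frac{X}{2}+\tilde{\nu}(u_1,w_1)$.
\begin{itemize}
\item For the first term, $(u_1,w_1)=\left(\frac m2-\frac{an}{c},\frac m3+\frac{2an}{c}\right)$.
\item For the second term, $(u_1,w_1)=\left(\frac m6-\frac{an}{c},\frac m3+\frac{2an}{c}\right)$.
\end{itemize}
Halving $X$ gives exactly $\pm\frac{anm}{2c}-\frac{m^2}{72}-\frac{9a^2n^2}{2c^2}$, which matches the displayed expression.

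To pass to the uniformizer at the cusp, note that $C\tau=\frac{g\tau+b}{6/g}$ with $b=6\beta t-\delta s$. Hence $e^{2\pi i C\tau}=\zeta^*\,e^{2\pi i g^2\tau/6}$, so an exponent $E$ in $e^{2\pi iC\tau}$ becomes $\frac{g^2}{6}E$ in $q$. Taking the minimum over the two terms gives the stated bound. The factor $(\gamma\tau+\delta)^{-1/2}$ and the roots of unity do not change orders.

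\textbf{Part (ii).} This is identical in structure, using Proposition \ref{prosc} with $\tilde g=\gcd(\gamma,6c^2)$, $\tilde m=6c^2\alpha/\tilde g$, $\tilde n=\gamma/\tilde g$. There is only one $\widehat{\mu}$-term here, with
\[
(u_1,w_1)=\left(\frac{\tilde m(6r+2-c)}{6c}-\frac{(c-1)\tilde n}{2},\ \frac{\tilde m}{3}\right).
\]
Its prefactor exponent, after halving, is
\[
\frac{\tilde n\tilde m(6r+2-3c)(c-1)}{12c}-\frac{\tilde m^2(6r+2-3c)^2}{72c^2}-\frac{\tilde n^2(c-1)^2}{8}.
\]
Since now $\tilde C\tau=\frac{\tilde g\tau+\cdot}{6c^2/\tilde g}$, we have $e^{2\pi i\tilde C\tau}=\zeta^* q^{\tilde g^2/(6c^2)}$, which produces the factor $\frac{\tilde g^2}{6c^2}$.

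\textbf{Main obstacle.} The step needing care is the bookkeeping in the scaling of $C\tau$. Specifically, we must check that the factor $e^{-\pi i kB\tau/6}=e^{-\pi i LC\tau/36}$ absorbed in Proposition \ref{prorc} has already been accounted for, so that no stray power of $q$ is missing. We must also confirm that Proposition \ref{procuu} applies with $C\tau$ in place of $\tau$; this holds because it is a statement about expansions in the variable of $\widehat{\mu}$.

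With $k=1$, part (i) has $g=\gcd(\gamma,6)$, matching the statement's normalization $g=\gcd(6,\gamma)$, and the factor is $\frac{g^2}{6}$.
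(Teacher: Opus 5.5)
Your proposal is correct and follows the same route as the paper, which states only that the bounds follow immediately from Propositions \ref{prorc}, \ref{prosc} and \ref{procuu}. Your bookkeeping supplies the steps the paper leaves implicit: you halve the exponent of $e^{\pi i C\tau}$, apply Proposition \ref{procuu} in the variable $C\tau$, and convert via $e^{2\pi i C\tau}=\zeta^* q^{g^2/6}$ (respectively $\zeta^* q^{\tilde g^2/(6c^2)}$).
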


Now we are in a position to prove Theorem \ref{thm3}.
\begin{proof}[Proof of Theorem \ref{thm3}] 
Multiplying by $\frac{\eta^5(50\tau)\eta(30\tau)}{q^{\frac{1}{3}}\eta^2(25\tau)\eta^5(150\tau)}$ throughout, after rearranging and simplifying, we find that \eqref{r5} is equivalent to
	\begin{align}
&\no\frac{\eta(30\tau)}{\eta^5(150\tau)}	\times\left\{\frac{\eta^5\left(50\tau\right)}{\eta^2(25\tau)}\left(	\mathscr{R}^0(1,5; \tau)-q^{-\frac{1}{3}}\sum_{r=0}^{4}(-1)^r\left(\zeta_5^{3r+1}+\zeta_5^{-3r-1}\right)S(r, 5 ; \tau)\right)\right\}
\\ &-\left(\zeta_5^2+\zeta_5^3\right)\Bigg(
\frac{J_{30,150}J^2_{50,150}J_{60,150}}{q^{25}J_{15,150}J^2_{25,150}J_{75,150}}
-\frac{J^2_{15,150}J^2_{35,150}J^4_{50,150}J^2_{65,150}}{q^{30}J_{5,150}J_{10,150}J_{20,150}J^2_{25,150}J_{40,150}J_{45,150}J_{55,150}J_{70,150}J_{75,150}}\no\\ &\quad\qquad\quad\qquad\quad
-\frac{J_{5,150}J_{45,150}J^4_{50,150}J_{55,150}}{q^{24}J_{10,150}J_{20,150}J^2_{25,150}J_{40,150}J_{70,150}J_{75,150}}
-\frac{J^4_{50,150}}{q^{28}J_{10,150}J_{20,150}J_{40,150}J_{70,150}}
\no\\ &\quad\qquad\quad\qquad\quad
+\frac{J_{15,150}J_{35,150}J^4_{50,150}J_{65,150}}{q^{27}J_{10,150}J_{20,150}J^2_{25,150}J_{40,150}J_{70,150}J_{75,150}}\no\\ &\quad\qquad\quad\qquad\quad
+\frac{J^2_{5,150}J^2_{45,150}J^4_{50,150}J^2_{55,150}}
{q^{21}J_{10,150}J_{15,150}J_{20,150}J^2_{25,150}J_{35,150}J_{40,150}J_{65,150}J_{70,150}J_{75,150}}
\no\\ &\quad\qquad\quad\qquad\quad+\frac{J_{30,150}J^2_{50,150}J_{60,150}}{q^{16}J^2_{25,150}J_{45,150}J_{75,150}}\Bigg)
 \no
-\frac{J_{30,150}J^2_{50,150}J_{60,150}}{q^{25}J_{15,150}J^2_{25,150}J_{75,150}}
+\frac{J_{30,150}J^4_{50,150}}{q^{30}J^2_{10,150}J^2_{40,150}J_{60,150}}\no\\ &
-\frac{J_{15,150}J_{30,150}J_{35,150}J^4_{50,150}J_{65,150}}{q^{29}J^2_{10,150}J^2_{25,150}J^2_{40,150}J_{60,150}J_{75,150}}
+\frac{J^2_{15,150}J_{30,150}J^2_{35,150}J^4_{50,150}J^2_{65,150}}{q^{28}J^2_{10,150}J^4_{25,150}J^2_{40,150}J_{60,150}J^2_{75,150}}
\no\\ &
+\frac{J_{30,150}J^2_{50,150}J_{60,150}}{q^{13}J^2_{25,150}J^2_{75,150}}
-\frac{J_{5,150}J_{30,150}J_{45,150}J^4_{50,150}J_{55,150}}{q^{27}J^2_{10,150}J_{15,150}J_{35,150}
	J^2_{40,150}J_{60,150}J_{65,150}}\no\\&
	+\frac{J_{5,150}J_{30,150}J_{45,150}J^4_{50,150}J_{55,150}}{q^{26}J^2_{10,150}J^2_{25,150}J^2_{40,150}
		J_{60,150}J_{75,150}}
=0\label{eqth}.
\end{align}
Noting that $\frac{\eta^5(150\tau)}{\eta(30\tau)}\in M_{2}\left(\Gamma_{0}(150)\right)$, we apply Theorem \ref{thm2} (ii) to find that the first term on the left side of \eqref{eqth} is a modular function on  
$\Gamma_{1}(300)$. An application of \cite[Theorem 3]{sinai} reveals that each infinite product on the left side of \eqref{eqth} is also a modular function on  
$\Gamma_{1}(300)$ (we check this automatically by using Frey and Garvan’s
MAPLE package THETAIDS \cite{maple}).

Next, we prove \eqref{eqth} by applying the valence formula.
We first recall the following result which follows immediately from the valence formula for modular functions: suppose that $f$ is a modular function (holomorphic on $\mathbb{H}$) on some congruence subgroup $\Gamma\subset SL_2(\mathbb{Z})$ and
$$\sum_{\zeta\in \mathcal{D}} \textrm{ord}(f,\zeta)\cdot \textrm{width}_\Gamma(\zeta)>0,$$
where $\mathcal{D}$ is a set of inequivalent cusps for $\Gamma$, $\textrm{ord}(f,\zeta)$ denotes the invariant order of $f$ at $\zeta$ and $\textrm{width}_\Gamma(\zeta)$ is the width of the cusp $\zeta$ for $\Gamma$.
Then $f$ is identically to zero. See \cite[Theorem 4.1.4]{rank} for more details and the notation used.

To check the cusp conditions of the infinite products in \eqref{eqth}, we require the following results:
\begin{enumerate}[(i)]
	\item 
	\cite[Corollary 2.2]{eta}: 
	Let $N \geq 1$ and let
	$$
	f(z)=\prod_{m \mid N} \eta(m z)^{a_{m}},
	$$
	with $a_{m} \in \mathbb{Z}$. Then for $(a, c)=1$,
\begin{align}
	\operatorname{ord}\left(f(z) ; \frac{a}{c}\right)=\sum_{m \mid N} \frac{(m, c)^{2} a_{m}}{24 m}.
	\label{ordeta}
\end{align}
	
	\item
	\cite[Lemma 3.2]{ajf}: Let 
	$$\theta_{\delta ; g}(\tau):=q^{\frac{(\delta-2 g)^{2}} {8 \delta}} (q^g,q^{\delta-g},q^\delta;q^\delta)_\infty,$$ with $0<g<\delta$. Then for  $(b, c)=1$,
\begin{align}
	\operatorname{ord}\left(\theta_{g ; \delta}(\tau), s\right)=\frac{e^{2}}{2 \delta}\left(\frac{b g}{e}-\left[\frac{b g}{e}\right]-\frac{1}{2}\right)^{2},	\label{ordtheta}
\end{align}
	where $e=(\delta, c)$ and $[\quad ]$ is the greatest integer function.
\end{enumerate}
Applying Proposition \ref{ordpro}, \eqref{ordeta} and \eqref{ordtheta}, we obtain lower bounds on the orders of the left hand side of \eqref{eqth} at the non-infinite cusps of $\Gamma_{1}(300)$ by taking the minimum order of each of the individual summands. This yields
\begin{align}&\sum_{\zeta\in \mathcal{D}} \textrm{ord}(\textrm{L.H.S of \eqref{eqth}},\zeta)\cdot \textrm{width}_{\Gamma_{1}(300)}(\zeta)\no\\&\geq \textrm{ord}(\textrm{L.H.S of \eqref{eqth}},\infty)-11505.\no
	\end{align}
	However, we computationally verify the validity of equations \eqref{co01} and \eqref{co12} for the first $12,000$ coefficients in their $q$-expansions.
	Moreover, following the proof of Corollary \ref{coro}, one can deduce \eqref{r5} from \eqref{co01} and \eqref{co12}. Thus $$\textrm{ord}(\textrm{L.H.S of \eqref{eqth}},\infty)\geq 12000.$$
Then \begin{align}&\sum_{\zeta\in \mathcal{D}} \textrm{ord}(\textrm{L.H.S of \eqref{eqth}},\zeta)\cdot \textrm{width}_{\Gamma_{1}(300)}(\zeta)>0\no.
	\end{align}
	Thus L.H.S of \eqref{eqth} is identically to zero and \eqref{r5} follows.
\end{proof}	
Next, we apply Theorem \ref{thm3} to obtain Corollary \ref{coro}.
\begin{proof}[Proof of Corollary \ref{coro}]
	By definition, we have
	\begin{align}
		R^0(\zeta_5 ; q)=\sum_{n=1}^\infty\sum_{m=-\infty}^\infty N^0(m,n)\zeta_5^mq^n=\sum_{m=0}^4\sum_{n=1}^\infty N^0(m,5,n)\zeta_5^mq^n\no.
	\end{align}
Since $N^0(m,5,n)=N^0(5-m,5,n)$ and $\sum_{m=0}^4\zeta_5^m=0$, we have
	\begin{align}
	R^0(\zeta_5 ; q)&=\sum_{n=1}^\infty \left(N^0(0,5,n)-N^0(1,5,n)\right)q^n\no\\&\quad+\left(\zeta_5^2+\zeta_5^3\right)\sum_{n=1}^\infty \left(N^0(2,5,n)-N^0(1,5,n)\right)q^n.\label{pco}
\end{align}
Noting that the minimal polynomial 
of $\zeta_5$ over the rational numbers is degree of $4$ and replacing $\zeta_5+\zeta^4_5$ by $-1-\zeta^2_5-\zeta^3_5$ in \eqref{r5}, after comparing the coefficients of $\zeta_5^k$ in the resulting equation to those in \eqref{pco}, we obtain \eqref{co01} and \eqref{co12}.
\end{proof}

\section*{Acknowledgements}
This work is supported by NSFC (National Natural Science
Foundation of China) through Grants NSFC 12471315.

\end{document}